\documentclass[12pt]{amsart} 
\usepackage[left=3cm,top=2.5cm,bottom=2.5cm,right=3cm]{geometry}
\usepackage{times}
\usepackage{amssymb, amsmath, amsthm}
\usepackage{amsmath}
\usepackage{amssymb,amsfonts,stmaryrd,mathrsfs}
\usepackage{latexsym,amsthm,mathtools,bbm}
\usepackage{mathtools}
\usepackage{graphicx,xspace}
\usepackage{epsfig}
\usepackage{enumitem}
\usepackage[usenames,dvipsnames]{xcolor}
\usepackage{tikz}
\usepackage{gensymb}
\usepackage[T1]{fontenc}
\usepackage[utf8]{inputenc}
\usepackage{bm}
\usepackage{subcaption}

\definecolor{green}{RGB}{0,127,0}
\definecolor{red}{RGB}{191,0,0}
\usepackage[colorlinks,cite color=red,link color=green,pagebackref=true]{hyperref}
\usepackage[capitalize]{cleveref}
\usepackage{todonotes}

\newtheorem{thm}{Theorem}[section]

\newtheorem{lemma}[thm]{Lemma}
\newtheorem{prop}[thm]{Proposition}
\newtheorem{defi}[thm]{Definition}

\newcommand{\ZZ}{\mathbb{Z}}
\newcommand{\QQ}{\mathbb Q}

\newcommand{\Fd}{F^{\pi_d}}

\newcommand{\bfp}{\mathbf{p}}
\newcommand{\bfq}{\mathbf{q}}

\newcommand{\YY}{\mathbb Y}

\newcommand{\dt}{\partial_t}

\title[]{Differential equations for bipartite maps with bounded face degrees}

\author[V.~Bonzom]{Valentin Bonzom}
\address{LIGM, CNRS UMR 8049, Université Gustave Eiffel, Champs-sur-Marne, France}
\email{valentin.bonzom@univ-eiffel.fr}

\thanks{V.~B.~was partially supported by the ANR-23-CE48-0018 CartesEtPlus.
}
\begin{document}
	
\begin{abstract}
	In recent years, integrable hierarchies have been used to great advantage for the enumeration of combinatorial maps. They have led to recurrence formulas with respect to the size and genus of the maps, e.g. for triangulations, bipartite quadrangulations and bipartite maps, and for constellations. These formulas are not only remarkably simple but also provide the fastest way of calculating these numbers of maps. With the exception of Louf's work on constellations, it has however remained a challenge to obtain recurrence formulas that control the degrees of the faces of the maps. Here we show how to achieve this for bipartite maps with bounded face degrees. By combining equations from the KP hierarchy and from the Virasoro constraints, a differentially algebraic system is obtained. It couples the generating functions of bipartite maps with bounded root face degrees while controlling the numbers of edges, black vertices, white vertices and number of faces of each degree (and in particular the genus). Finally, this system of ODEs is shown to give recurrence formulas that allows to calculate all the corresponding numbers of maps.
\end{abstract}
	
\maketitle
	
\section{Introduction}
Combinatorial maps (or simply, maps) are multi-graphs properly embedded in surfaces, without crossings and up to deformations. This gives maps more structure than graphs and a canonical genus which is that of the embedding surface. Maps of genus $g$ in fact provide a graphical representation of ramified coverings of genus $g$ of the 2-sphere \cite{LandoZvonkin2004}, so that enumerating maps also means computing Hurwitz numbers.

Tutte in the 60s pioneered the enumeration of planar maps, i.e. maps of genus 0, but maps of positive genus in general require different methods. A set of equations {\it à la} Tutte, called below the \emph{Virasoro constraints} (a.k.a. loop equations), can be written for the set of generating functions of maps of genus $g$ and $n$ rooted faces \cite{BenderCanfield1986}. These equations can be solved recursively in $2g+n$ using the so-called topological recursion of Eynard-Orantin \cite{EO, Eynard:book}. This approach provides interesting results on the structure of the generating functions at fixed $(g,n)$, showing that they are rational with respect to some generating functions of trees \cite{BenderCanfieldRichmond93}. Such structural results could also be proved bijectively \cite{AlbenqueLepoutre2019}.

However, neither the topological recursion nor the bijective approaches have provided an efficient way of calculating the number of maps of a fixed size (say with respect to number of edges) and fixed genus. It also remained a mystery whether the generating function of maps with respect to their size and genus satisfies a ``nice'' equation, like an ordinary differential equation (ODE) with respect to the size parameter. An exception is the case of maps of genus $g$ with one face for which a nice recurrence exists, due to Harer and Zagier \cite{HarerZagier1986}, and which also has bijective proofs \cite{ChapuyFerayFusy2013}.

\subsection*{The KP hierarchy} A different approach has produced strong results in these directions, based on integrability. This is the approach that we will use here, making use of the KP hierarchy \cite{Kac2013Bombay, JimboMiwa1983}. Let $\mathbf{p} = (p_1, p_2, \dotsc)$ be an infinite set of indeterminates, and $\tau(\mathbf{p})\in R[[p_1, p_2, \dotsc]]$ (the ring of formal power series in the indeterminates $p_1, p_2, \dotsc$ with coefficients in some ring $R$). Following the Japanese school, the KP hierarchy is an infinite system of partial differential equations (PDEs) which are consistent with one another. We say that $\tau(\mathbf{p})$ is a KP tau function if it satisfies the KP hierarchy, whose first equation is the famous KP equation
\begin{equation} \label{KPEq}
	-F_{3,1} + F_{2,2} + \frac{1}{2} F_{1,1}^2 + \frac{1}{12} F_{1,1,1,1} = 0
\end{equation}
with the notation $F\equiv F(\bfp) = \ln \tau(\mathbf{p})$ and $F_{i_1, i_2, \dotsc} \coloneqq \frac{\partial}{\partial p_{i_1}} \frac{\partial}{\partial p_{i_2}} \dotsb F$. Let us give a couple more equations from the hierarchy (obtained from the appendix of \cite{JimboMiwa1983}),
\begin{equation*}
	\begin{aligned}
		&-F_{4,1} + F_{3,2} + F_{2,1}F_{1,1} + \frac{1}{6}F_{2,1,1,1} = 0,\\
		&\begin{multlined}-F_{5,1} + F_{3,3} + \frac{1}{12}F_{3,1,1,1} + \frac{1}{4}F_{2,2,1,1} + \frac{1}{2}F_{3,1}F_{1,1} + \frac{1}{2}F_{2,2}F_{1,1} + F_{2,1}^2 - \frac{1}{24}F_{1,1}F_{1,1,1,1} \\- \frac{1}{12}F_{1,1}^3 - \frac{1}{720} F_{1,1,1,1,1,1} = 0,\end{multlined}\\
		&\begin{multlined}F_{4,2} - F_{3,3} + \frac{1}{24}F_{3,1,1,1} - \frac{1}{8} F_{2,2,1,1} + \frac{1}{4} F_{3,1}F_{1,1} - \frac{1}{4}F_{2,2}F_{1,1} - \frac{1}{2}F_{2,1}^2 + \frac{5}{48}F_{1,1}F_{1,1,1,1} \\+ \frac{5}{24}F_{1,1}^3 + \frac{1}{288} F_{1,1,1,1,1,1} = 0.\end{multlined}
	\end{aligned}
\end{equation*}

In this article we focus on \emph{bipartite maps}, aka {\it dessins d'enfants}, i.e. such that a black (resp. white) vertex is only connected to white (resp. black) vertices. A face of a map is a connected component of the complement of the graph in the embedding surface. A corner of a map is a portion of a face that sits at a vertex between two adjacent edges along the face. In a bipartite map, every face alternates corners incident to black and white vertices and we call the \emph{degree of a face} the number of white (or black) corners of this face. See an example in Figure \ref{fig:Map}.

\begin{figure}
	\includegraphics[scale=.85]{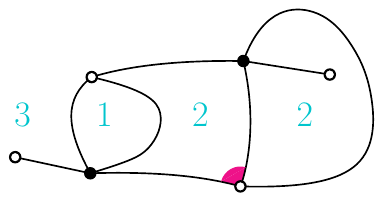}
	\caption{\label{fig:Map}A connected, rooted bipartite map where the face degrees are given. The root is a distinguished corner on a white vertex, here pictured as an angular sector.}
\end{figure}

Let $M_{V_\bullet, V_\circ, \lambda}(n)$ be the number of edge-labeled, bipartite maps, connected or not, with $n$ edges, $V_{\bullet}$ black vertices, $V_{\circ}$ white vertices and faces of degrees $\lambda_1, \lambda_2, \dotsc, \lambda_\ell$, with the notation $\lambda = (\lambda_1, \dotsc, \lambda_\ell)$ ordered non-increasingly. Let $t,u,v$ be three formal parameters. Then let $\tau(t, u, v, \mathbf{p})\in \mathbb{Q}[u,v,\bfp][[t]]$ (the ring of formal power series in $t$ whose coefficients are polynomials in $u,v,p_1, p_2,\dotsc$) be the generating function
\begin{equation} \label{BipTau}
	\tau(t,u,v,\mathbf{p}) = \sum_{n\geq 0} \frac{t^n}{n!} \sum_{V_\bullet, V_\circ, \lambda\vdash n} M_{V_\bullet, V_\circ, \lambda}(n) u^{V_\bullet} v^{V_\circ} p_{\lambda_1} p_{\lambda_2} \dotsm
\end{equation}
It is a KP tau function, as shown by Goulden and Jackson\footnote{This was in fact known before, in mathematical physics, from the connection between maps and matrix integrals. In particular, the origin of integrability for maps can be traced back to the use of orthogonal polynomials to calculate matrix integrals.} \cite{GouldenJackson2008}. The KP hierarchy does not characterize the tau function\footnote{For instance, polynomial solutions form the Sato's Grassmannian \cite{Kac2013Bombay}.}, however the set of Virasoro constraints mentioned earlier does determine $\tau(t,u,v,\bfp)$.
It was already noticed by physicists \cite{KazakovKostovNekrasov1999} that the KP equation \eqref{KPEq} can be used in conjunction with the Virasoro constraints to write a differential equation on $F$, with respect to $t$, in the case of triangulations. This was fully taken advantage of by Goulden and Jackson, who made these techniques available to the combinatorics community and were able to deduce a remarkably simple recurrence relation for triangulations of size $n$ and genus $g$ (triangulations cannot be obtained from \eqref{BipTau}, but the same recipe works).

\subsection*{Bipartite quadrangulations} Carrell and Chapuy \cite{CarrellChapuy2015} took advantage of these techniques to obtain recurrence relations for rooted bipartite quadrangulations (equivalently maps with no constraints on face degrees),
\begin{multline} \label{Quad}
	(n+2)Q_n(u,v,z) = 4uv(u+v)z \delta_{n,2} + 6uv(uv+1)z^2\delta_{n,4} \\+ 4(u+v)z(n-1)Q_{n-2}(u,v,z)
	+ (n^2 - n + 12uv)(n - 2) z^2 Q_{n-4}(u,v,z) \\
	+ 6z^2 \sum_{\substack{i,j\geq 1\\i+j=n-6}} (i+2)(j+2) Q_i(u,v,z) Q_j(u,v,z),
\end{multline}
where $Q_n(u,v,z) = [t^{n+1}] \frac{d\ln\tau}{dt}(t,u,v,\mathbf{p}=(0,z,0, \dotsc))$ is the number of connected, rooted bipartite quadrangulations with $n\geq 2$ edges (setting $Q_n(u,v,z)=0$ for $n<2$). Kazarian and Zograf \cite{KazarianZograf2015} gave a similar recurrence for general bipartite maps (i.e. $p_i=z$ for all $i\geq 1$).

\subsection*{Main result} A natural question is whether such recurrence relations hold for other models of maps, such as quadrangulations, $p$-angulations and more generally maps whose faces can have any degree up to a fixed bound $d$. Here we answer this question positively in the bipartite case. Let $\mathbb{K}_d = \QQ[u,v,\pi_d(\mathbf{p})]$ and $F(t,u,v,\bfp) =\ln\tau(t,u,v,\bfp)\in \mathbb{K}_d[[t]]$ be the generating function of connected bipartite maps. Let $\pi_d$ be the operator that specializes elements of $\mathbb{K}_d[[t]]$ to $p_i=0$ for $i>d$. We denote 
\begin{equation*}
	\Fd_{i_1, i_2, \dotsc}(t,u,v,\pi_d(\bfp)) \coloneqq \pi_d \frac{\partial}{\partial p_{i_1}} \frac{\partial}{\partial p_{i_2}} \dotsm F(t,u,v,\bfp)
\end{equation*}
and in particular $\Fd(t,u,v,\pi_d(\bfp)) = F(t,u,v,(p_1, \dotsc, p_d, 0, \dotsc))$. Expand the following series as
\begin{equation} \label{Fexpansion}
	\Fd (t,u,v,\pi_d(\bfp)) = \sum_{n\geq 1} f_n t^{n}, \qquad \Fd_k(t,u,v,\pi_d(\bfp)) = \sum_{n\geq 1} f^{(k)}_n t^{n},
\end{equation}
with $f_n\equiv f_n(u,v,\pi_d(\bfp))$ and $f^{(k)}_n\equiv f^{(k)}_n(u,v,\pi_d(\bfp))$ elements of $\mathbb{K}_d$, for $k=1, \dotsc, d-1$. Our main result is that one can calculate all $f_n$ via the $f^{(k)}_n$s in a triangular manner.

\begin{thm} \label{thm:Main}
	For all $n\geq 1$, $f_n = f^{(1)}_{n+1}$, and there exists a system of $d-1$ recurrence relations which determines the coefficients $f_{n}^{(k)}$ recursively,
	\begin{equation*}
		\begin{aligned}
			P_1(n) f^{(1)}_n &= R_1(n, f^{(1)}_{<n}, f^{(2)}_{<n}, \dotsc, f^{(d-1)}_{<n})\\
			P_2(n) f^{(2)}_n &= R_2(n, f^{(1)}_{\leq n}, f^{(2)}_{<n}, \dotsc, f^{(d-1)}_{<n})\\
			P_3(n) f^{(3)}_n &= R_3(n, f^{(1)}_{\leq n}, f^{(2)}_{\leq n}, f^{(3)}_{<n}, \dotsc, f^{(d-1)}_{<n})\\
			&\vdots\\
			P_{d-1}(n) f^{(d-1)}_n &= R_{d-1}(n, f^{(1)}_{\leq n}, f^{(2)}_{\leq n} \dotsc, f^{(d-2)}_{\leq n}, f^{(d-1)}_{<n}).
		\end{aligned}
	\end{equation*}
	Here $P_k(n)\in\mathbb{K}_d[n]$ is non-zero, and $R_k$ is polynomial in its variables with coefficients in $\mathbb{K}_d$. Moreover we use the notation $f^{(k)}_{<n} \equiv \{f^{(k)}_{n'}\}_{n'<n}$ and similarly for $f^{(k)}_{\leq n} = f^{(k)}_{<n+1}$.
	
	This holds for all $n\geq 1$ except for $(k,n)=(1,1)$ where the initial condition $f^{(1)}_1=uv$ holds instead.
\end{thm}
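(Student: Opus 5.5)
The plan is to combine three sources of equations for $F=\ln\tau$: the Virasoro constraints (Tutte/loop equations obtained by deleting the root edge of a map whose root face has prescribed degree), the homogeneity relations coming from the combinatorial meaning of the gradings, and the KP hierarchy satisfied by $\tau$ (Goulden–Jackson). I would then apply $\pi_d$ everywhere and read off coefficients of $t^n$.

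\textbf{Step 1: the relation $f_n=f^{(1)}_{n+1}$ and the initial condition.} A face of degree $1$ in a bipartite map has one white and one black corner, so it is bounded by a single edge that is a double edge with another one. I would use the lowest Virasoro constraint in the form
\begin{equation*}
  F_1 = t\,uv + t\,F,
\end{equation*}
and apply $\pi_d$, which commutes with $\partial_{p_1}$ since $d\ge 2$. Extracting $[t^{n+1}]$ gives $f^{(1)}_{n+1}=f_n$ for $n\ge1$ and $f^{(1)}_1=uv$. If the constraint is not available in exactly this form, I would derive it directly from the root-edge decomposition of $\partial_{p_1}\tau$, using the exponential formula to pass to connected maps. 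This identity also lets me replace every occurrence of $\Fd$ by $t^{-1}\Fd_1-uv$.

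\textbf{Step 2: express the higher derivatives through $\Fd_1,\dots,\Fd_{d-1}$.} The Virasoro constraints $L_k$ have the shape
\begin{equation*}
  \frac1t F_{k+1} = (u+v)\,k F_k + \sum_{i+j=k}\bigl(F_{i,j}+F_iF_j\bigr) + \sum_{i\ge1} i p_i F_{i+k} + \dotsb .
\end{equation*}
After applying $\pi_d$, the last sum is finite and only involves $\Fd_{m}$ with $m\le d+k$. I would also use the Euler-type identities
\begin{equation*}
  t\partial_t F=\sum_i i p_i F_i, \qquad \sum_i p_i F_i=\text{(number of faces operator)},
\end{equation*}
together with the Euler relation for the genus. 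With these I express $\Fd_d$, $\Fd_{d+1}$, $\dots$, and the mixed second derivatives that appear in the KP equations (such as $\Fd_{3,1}$, $\Fd_{2,2}$, $\Fd_{k,1}$), as polynomials in $t$, $\partial_t$ applied to $\Fd_1,\dots,\Fd_{d-1}$, with coefficients in $\mathbb K_d$. Derivatives in $p_1$ are converted to $t$-derivatives through $\partial_{p_1}$-constraints and Step 1. The principle throughout is that every $p$-derivative that cannot be removed by homogeneity is traded for a $t$-derivative.

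\textbf{Step 3: the triangular ODE system.} Substituting into the KP equation and into the next $d-2$ equations of the hierarchy (those involving $F_{k,1}$ linearly, such as $-F_{k+2,1}+F_{k+1,2}+\dotsb=0$) should give a system of ODEs in $t$. In the $k$-th equation, $\Fd_k$ appears linearly through an operator of the form $c_k(t\partial_t)\Fd_k$, together with lower-index unknowns $\Fd_{<k}$ to all orders. All nonlinear terms and all $\Fd_{\ge k}$ terms carry at least one extra factor of $t$. Extracting $[t^n]$ then gives $P_k(n)f^{(k)}_n=R_k(\dots)$ with exactly the dependence pattern claimed in the theorem.

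\textbf{Main obstacle.} The hardest part is proving that this triangular structure really holds: that $P_k(n)$, which is the polynomial obtained from the linear $t$-degree-preserving part of the $k$-th equation, is nonzero in $\mathbb K_d[n]$, and that no term $f^{(j)}_n$ with $j\ge k$ survives at the same order. To check this, I would first compute the lowest-order parts of the operators by weight counting. Assign weight $1$ to $t$ and weight $-i$ to $p_i$; then each Virasoro substitution raises the $t$-order except in its leading linear term. I would then verify directly that the resulting leading symbol is a nonzero polynomial in $n$, for instance by specializing $u=v=1$ and $p_i=0$ and checking the leading power of $n$. If some KP equation fails to isolate $\Fd_k$, the fallback is to take a suitable linear combination of the equations at the same weight, as is done in the quadrangulation case.
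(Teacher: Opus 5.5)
\textbf{Step 1 is wrong, and the error propagates.} The $k=0$ Virasoro constraint is $\Fd_1=tuv+t\sum_{i=1}^d ip_i\Fd_i$. Combined with $t\partial_t F=\sum_i ip_iF_i$, it gives $\Fd_1=tuv+t^2\partial_t\Fd$, not $tuv+t\Fd$. Removing one edge of the digon around a degree-$1$ face yields a map rooted at an edge, which is where the factor $t\partial_t$ comes from. You can check this at low order: $[t^2]\Fd=\tfrac12 uv\bigl((u+v)p_2+p_1^2\bigr)$, whereas $[t^3]\Fd_1=uv\bigl((u+v)p_2+p_1^2\bigr)$. So $f^{(1)}_{n+1}=n[t^n]\Fd$, and the identity $f_n=f^{(1)}_{n+1}$ only holds if $f_n$ is read as the coefficient of $t\partial_t\Fd$ (rooted maps), as in Section~\ref{sec:Recurrence}. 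More importantly, differentiating the correct identity gives $\Fd_{k,1^l}=(t^2\partial_t)^l\Fd_k$. This is the only mechanism by which $p$-derivatives become $t$-derivatives. Your version would give $\Fd_{k,1}=t\Fd_k$, which is false and produces no derivatives at all.

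\textbf{The genuine gap is in Steps 2--3: nothing closes the system.} Only $p_1$-derivatives can be traded for $t$-derivatives. Non-hook derivatives with all parts $\le d-1$, such as $\Fd_{2,2}$ or $\Fd_{2,2,1,1}$ when $d\ge 3$, are not removed by the Virasoro constraints or by homogeneity: the Virasoro reduction (Proposition~\ref{prop:VirasoroReduction}) stops at $\mathcal{F}_{<d}$, which contains them, and this is exactly the catalytic-variable obstruction. The face-counting operator together with Euler's formula brings in the genus and $u\partial_u, v\partial_v$, not $t$. So substituting into ``the KP equation and the next $d-2$ equations'' does not close: each of these contains further non-hook unknowns. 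You therefore have more unknowns than equations, and the triangular form in Step 3 is only asserted.

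The missing idea is the Dubrovin--Natanzon form of the \emph{whole} hierarchy (Proposition~\ref{prop:Fij}): every $F_\lambda$ is a polynomial in hooks $F_{m,1^a}$ with $m\le|\lambda|-\ell(\lambda)+1$. With this in hand, the argument runs as follows.
\begin{enumerate}
\item The $d-1$ equations are not KP equations. They are $\sum_{i=1}^d ip_i\Fd_{i,k}=t^{-1}\Fd_{k,1}-k\Fd_k$ for $k=2,\dots,d$, coming from $L_0$ at $\lambda=(k)$, with each $\Fd_{i,k}$ rewritten via KP into hooks with $m\le 2d-1$.
\item Hooks with $d\le m\le 2d-1$ are eliminated using $L_{m-d}$ at $\lambda=(1^l)$, solved for its $p_d$-term.
\item This step reintroduces only non-hooks $\Fd_{i,j,1^l}$ with $i+j\le d-1$. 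KP sends these to hooks with $m\le d-2$, so the substitution terminates.
\end{enumerate}
Finally, proving $P_k\neq 0$ requires an actual computation that tracks explicit powers of $t$ (Proposition~\ref{prop:ExplicitG}). You cannot specialize all $p_i=0$, since the construction divides by $p_d$. The paper specializes $p_i=\delta_{i,d}p_d$ instead and finds leading coefficients proportional to $(d-1)n+k$ for $k\ge 2$. For $k=1$ the leading coefficient is proportional to $(d-1)n+1$ and sits at $[t^{n-1}]$; the shift comes from the term $t^{-2}\Fd_{1,1}$ in $G_{2d-1,1}$, produced by the double substitution.
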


Euler's formula gives access to the genus $g$ of such a connected, bipartite map: $2-2g=F-n+V_\bullet+V_\circ$, where $F$ is the total number of faces. As a consequence, knowing the size $n$, together with having the parameters $u, v, p_1, \dotsc, p_d$, allows to extract the number $f_{n,g}(u,v,\pi_d(\bfp))$ of connected, bipartite maps with $n$ edges and of genus $g$,
\begin{equation*}
	f_{n,g}(u,v,\pi_d(\bfp)) = [x^{2-2g+n}]f_n(xu,xv,xp_1, \dotsc, xp_d).
\end{equation*}

In the case $d=2$, Theorem \ref{thm:Main} gives a single equation which is a straightforward generalization of \eqref{Quad} with $z=p_2$ and non-zero $p_1$. We give more examples in Section \ref{sec:ODEs}.

The Virasoro constraints allow to express any $\Fd_{i_1, i_2, \dotsc}$ as polynomials in $\Fd_{j_1, j_2, \dotsc}$ where $j_1, j_2, \dotsb\leq d-1$, see Proposition \ref{prop:VirasoroReduction}. This still leaves many unknowns and it is {\it a priori} unclear if and how to use the KP hierarchy to close the system. It turns out that there is a formulation of the hierarchy that is well-adapted to our needs, due to Dubrovin and Natanzon \cite{DubrovinNatanzon1989}. It expresses any derivatives of $F(\bfp)$ polynomially on the set $\{F_{i, 1^{l}}\}_{i,l}$. By a finite number of rounds of substitution between the Virasoro constraints and the KP hierarchy, one finds an algebraically differential system on $\Fd_1, \dotsc, \Fd_{d-1}$ with respect to $t$, which in turn gives the recurrence of Theorem \ref{thm:Main}.

The fact that the Virasoro constraints, treated as algebraic relations between the quantities $\Fd_{i_1, i_2, \dotsc}$, leave some of those unknown is the origin of the difficulties when solving these equations in various contexts. For example, when restricting attention to planar maps, the Virasoro constraints simplify a bit but one ends up with an equation that features the unknown $\Fd_1, \dotsc, \Fd_{d-1}$ (or rather their planar parts). This gives rise to the notion of equations with catalytic variables \cite{BousquetJehanne2006}. In higher genus, Eynard-Orantin topological recursion manages to bypass the issues of determining these unknown series using complex analysis and well-chosen residues. In our case, we deal with them using the KP hierarchy instead.

\subsection*{Comparison with \cite{Louf2019}} Let us contrast our result with that of Louf \cite{Louf2019}. There, the author uses a different integrable hierarchy than KP, called the Toda hierarchy, which is satisfied by the generating function of bipartite maps with a weight $p_i$ on faces of degree $i$, $q_i$ on white vertices of degree $i$ and $v$ on black vertices. It gives rise to a beautiful functional recurrence for $f_n(u,v,\mathbf{p})$ the number of bipartite maps with a weight $u$ per white vertex, $v$ per black vertex and $p_i$ per face of degree $i$,
	\begin{multline} \label{Louf}
		n(n+1) f_n(u,v,\bfp) = \sum_{k+l=n+1} (uv\delta_{k,1}+kf_{k-1}(u,v,\bfp)) \Bigl(- 2f_l(u,v,\bfp) \\+ (1+u)^{l+1}(1+v)^{l+1}f_l\Bigl(\frac{u}{1+u},\frac{v}{1+v},\bfp\Bigr) + (1-u)^{l+1}(1-v)^{l+1}f_l\Bigl(\frac{u}{1-u},\frac{v}{1-v},\bfp\Bigr) \Bigr)
\end{multline}
It is obviously much more compact and explicit than the system we obtain in Theorem \ref{thm:Main}. It is also more general, as it holds without a fixed bound on the face degrees, and also generalizes to constellations (of which bipartite maps are a special case). However there is a key difference that justifies our work: Equation \eqref{Louf} is \emph{not} an ordinary recurrence on $f_n(u,v,\pi_d(\mathbf{p}))$ (even when restricted to bounded face degrees), because it involves $f_n(u,v,\bfp)$ at transformed values of the variables $u\mapsto u/(1\pm u)$ and $v\mapsto v/(1\pm v)$. This is because \eqref{Louf} is not derived from an ODE on $F(t,u,v,\bfp) \coloneqq \ln\tau(t,u,v,\mathbf{p})$, but instead from a functional equation that involves the transformations $F(t,u,v,\bfp) \mapsto F(t(1\pm u)(1\pm v), \frac{u}{1\pm u}, \frac{v}{1\pm v},\bfp)$, due to the nature of the Toda hierarchy.

Our result therefore fills an academic gap that answers the question whether there exists a recurrence relation, or at least a system of recurrence relations that determines $f_n(u,v,\pi_d(\mathbf{p}))$, or in other words whether it satisfies a \emph{differentially algebraic system of equations with respect to the size variable $t$ only that determines all coefficients}.

\subsection*{Plan} In Section \ref{sec:KP}, we present the equations of KP hierarchy due to Dubrovin and Natanzon \cite{DubrovinNatanzon1989}. In Section \ref{sec:Maps}, we focus on the Virasoro constraints and explain our strategy that combines them together with the KP equations from Section \ref{sec:KP}. In Section \ref{sec:ODEs}, we derive the differentially algebraic system mentioned above on the series $\Fd_1(t,u,v,\pi_d(\bfp))$, \ldots, $\Fd_{d-1}(t,u,v,\pi_d(\bfp))$ and in Section \ref{sec:Recurrence} we finish the proof of Theorem \ref{thm:Main} by deriving the recurrence relations from the differentially algebraic system.

\section{Hook expressions and the KP hierarchy} \label{sec:KP}
We recall that an integer partition, or simply a partition, $\lambda = (\lambda_1, \dotsc, \lambda_\ell)$ is a non-increasing sequence of positive integers, $\lambda_1\geq \dotsb\geq\lambda_\ell>0$. The size of $\lambda$ is $|\lambda| = \sum_{i=1}^\ell \lambda_i$ and we also write $\lambda\vdash|\lambda|$. Its length $\ell(\lambda) = \ell$ is the number of parts. We denote $\YY$ the set of all partitions.

Let $\bfp = (p_1, p_2, \dotsc)$ be an infinite set of indeterminates and $R[[\bfp]]$ the ring of formal power series in these variables with coefficients in a ring $R$. If $A\in R[[\bfp]]$, then denote 
\begin{equation*}
	A_{i_1, \dotsc, i_n}(\bfp) = \frac{\partial^{n}A(\bfp)}{\partial p_{i_1} \dotsm \partial p_{i_n}}.
\end{equation*}

The KP hierarchy is an infinite set of PDEs on a function $\tau(\bfp)\in R[[\bfp]]$, called a tau function, or equivalently on $F(\bfp)=\ln \tau(\bfp)$ (if well-defined). The KP hierarchy can also be thought of as an infinite set of \emph{algebraic} equations relating some quantities denoted $H_{i_1, \dotsc, i_n}$ for $n\geq 0$ and $i_1, \dotsc, i_n\geq1$. The integrability property of the hierarchy implies that this point of view is compatible with $H_{i_1, i_2, \dotsc}$ being the derivative of a function $F(\bfp)\in R[[\bfp]]$ with respect to $p_{i_1}, p_{i_2}, \dotsc$ In particular, since $F_{\sigma(i_1), \dotsc, \sigma(i_n)}(\bfp) = F_{i_1, \dotsc, i_n}(\bfp)$ for any permutation of $\{1,\dotsc,n\}$, it is enough to work with the set $\mathcal{H} = \{H_\lambda\}_{\lambda\in\mathbb{Y}}$. In this section, we treat the $H_\lambda$s as formal variables, and denote $\Phi_F: \QQ[\mathcal{H}]\to R[[\bfp]]$ the evaluation of $P(\mathcal{H})\in \QQ[\mathcal{H}]$ obtained by substituting all the $H_\lambda$s with the derivatives $F_\lambda(\bfp)$ of $F(\bfp)$,
\begin{equation*}
	\Phi_F P(\mathcal{H})(\bfp) = P(\mathcal{H})_{|\{H_\lambda = F_\lambda(\bfp)\}_{\lambda\in\YY}}
\end{equation*}

We introduce the \emph{degree} of the variable $H_\lambda$ as $|\lambda|$ and the degree of the product $H_{\lambda^{(1)}} \dotsm H_{\lambda^{(n)}}$ as $\sum_{i=1}^d |\lambda^{(i)}|$. Let $\mathbb{Y}_{\text{hook}}$ the set of \emph{hook partitions}, that is partitions of the form $\lambda=(i, 1^a)$ for $i\geq 1$ and $a\geq0$ (containing at most one part that is not 1), and $\mathcal{H}_{\text{hook}} = \{H_{\lambda}\}_{\lambda \in \mathbb{Y}_{\text{hook}}}$. We say that a \emph{hook expression} is a polynomial in $\QQ[\mathcal{H}_{\text{hook}}]$.

\begin{prop}[Dubrovin-Natanzon \cite{DubrovinNatanzon1989}] \label{prop:Fij}
	For every partition $\lambda \in\YY$, the KP hierarchy provides a hook expression for $F_\lambda(\bfp)$. There exists $\operatorname{KP}_\lambda(\mathcal{H}_{\text{hook}})\in \QQ[\mathcal{H}_{\text{hook}}]$ homogeneous of degree $|\lambda|$ such that 
	\begin{equation*}
		F_\lambda(\bfp) = \Phi_F\operatorname{KP}_\lambda(\mathcal{H}_{\text{hook}})(\bfp),
	\end{equation*}
	and for any hook $\mu=(i,1^j)$ appearing in $\operatorname{KP}_\lambda(\mathcal{H}_{\text{hook}})$, one has $i\leq |\lambda|-\ell(\lambda)+1$.
	
	In particular for $\lambda=(k,l)$ with $k, l\geq 2$ one has
	\begin{equation*}
		\operatorname{KP}_{(k,l)}(\mathcal{H}_{\text{hook}}) = H_{k+l-1,1} + \operatorname{KP}'_{(k,l)}(\mathcal{H}_{\text{hook}})
	\end{equation*}
	where $\operatorname{KP}'_{(k,l)}(\mathcal{H}_{\text{hook}})$ does not contain any $H_{k+l-1,1}$.
\end{prop}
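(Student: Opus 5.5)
The plan is to derive hook expressions from the Lax formulation of KP, first for two-part partitions, then by induction on the number of parts of $\lambda$ different from $1$. Work with times $t_i = p_i/i$, $x=t_1$. There is a pseudo-differential Lax operator $L=\partial_x+\sum_{j\geq1}u_j\partial_x^{-j}$ with $\partial_{t_k}L=[(L^k)_+,L]$ and $\partial_x\partial_{t_k}\ln\tau = \operatorname{res} L^k$.

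\textbf{Step 1 (the $u_j$ as hook expressions).} Expanding $\operatorname{res}L^k = k\,u_{k-1} + (\text{differential polynomial in } u_1,\dots,u_{k-2})$ shows that the system $F_{k,1}\propto \operatorname{res}L^k$ is triangular. So each $u_j$ is a polynomial in the $F_{i,1^a}$ with $i\leq j+1$: their $x$-derivatives are exactly the hooks $H_{i,1^{a}}$. Any differential polynomial in the $u_j$ is therefore a hook expression.

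\textbf{Step 2 (two-part partitions $\lambda=(k,l)$).} We have
\[
\partial_x F_{k,l} \propto \partial_{t_l}\operatorname{res}L^k=\operatorname{res}[(L^l)_+,L^k].
\]
The residue of a commutator is a total $x$-derivative $\partial_x h_{k,l}$, with $h_{k,l}$ an explicit differential polynomial in the $u_j$. Hence $F_{k,l}=h_{k,l}$ up to an $x$-independent term. I expect this integration constant to be the main obstacle. To kill it, I would rather read $F_{k,l}=h_{k,l}$ directly off the bilinear identity, i.e. the Fay identity for $\tau(\bfp-[z]-[w])\tau(\bfp)/(\tau(\bfp-[z])\tau(\bfp-[w]))$, whose $z^kw^l$ coefficient is $F_{k,l}$ on one side and only $\partial_x$-derivatives of one-point shifts on the other. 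Failing that, I would argue by grading: give $p_i$ degree $i$ and note that there is no nonzero constant of the right homogeneous degree compatible with the vanishing at $\bfp=0$ of the generic tau function.

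Homogeneity of degree $k+l$ follows because $\partial_x$ and $u_j$ carry weights $1$ and $j+1$. The bound $i\leq k+l-1$ also follows: the only term involving $u_{k+l-2}$, i.e. $F_{k+l-1,1}$, is linear, since every product or derivative lowers the maximal index. For the coefficient of $H_{k+l-1,1}$ being exactly $1$, I would linearize around $\tau=1$. At linear order the relation is $F_{k,l}=F_{k+l-1,1}+(\partial_1\text{-derivatives of lower hooks})$, and I would fix the normalization by testing on an explicit tau function, e.g. $\tau=1+\varepsilon s_\mu$ with $s_\mu$ a Schur function for a suitable two-row or hook shape $\mu$.

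\textbf{Step 3 (induction).} For $\lambda=(\lambda_1,\dots,\lambda_m,1^a)$ with $\lambda_m\geq2$, write $F_\lambda=\partial_{p_{\lambda_m}}F_{\lambda'}$ with $\lambda'=\lambda\setminus\lambda_m$, and use the hook expression for $F_{\lambda'}$. By the Leibniz rule we need $\partial_{p_{\lambda_m}}H_{i,1^b}=\partial_1^b F_{i,\lambda_m}$. Apply Step 2 to $F_{i,\lambda_m}$ and then $\partial_1^b$, which preserves hook expressions. Homogeneity is additive. For the index bound, the new hooks have first index at most $i+\lambda_m-1$. With $i\leq|\lambda'|-\ell(\lambda')+1$ this gives at most $|\lambda|-\ell(\lambda)+1$, as required.
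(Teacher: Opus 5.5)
Your Steps 1 and 3 are sound. Step 3 is essentially the paper's final induction on $\ell(\lambda)$, with the same index bound $i+\lambda_m-1\leq|\lambda|-\ell(\lambda)+1$. Step 2, however, has a genuine gap exactly where you suspect it, and neither proposed repair closes it.

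Nothing in Steps 1--2 can see the integration constant. The relations $\operatorname{res}L^k=\partial_x\partial_{t_k}\ln\tau$ and $\partial_{t_l}L=[(L^l)_+,L]$ are unchanged if $F$ is replaced by $F+G(p_2,p_3,\dots)$ for an arbitrary series $G$. This replacement leaves the $u_j$ and $h_{k,l}$ untouched but shifts $F_{k,l}$ by $\partial_{p_k}\partial_{p_l}G$. So, as far as your argument knows, $c_{k,l}=F_{k,l}-h_{k,l}$ is an arbitrary element of $R[[p_2,p_3,\dots]]$. The identity $F_{k,l}=h_{k,l}$ is information carried by the bilinear identity (through $w=e^{\xi}\tau(\bfp-[z^{-1}])/\tau(\bfp)$), not by the Lax equations. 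The grading argument cannot supply it: $c_{k,l}$ is a power series, not a number, and a general tau function (e.g.\ $1+p_1$) is not homogeneous for $\deg p_i=i$. So homogeneity of the universal expression imposes nothing on $c_{k,l}$.

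The Fay identity does see the constant, but it does not give $F_{k,l}$ ``directly''. Set $D(z)=\sum_i z^i\partial_{p_i}$. The $z^kw^l$ coefficient of $(e^{-D(z)}-1)(e^{-D(w)}-1)F$ is $F_{k,l}$ plus multiples of $F_{\mu\cup\nu}$ for all $\mu\vdash k$, $\nu\vdash l$ with $\ell(\mu)+\ell(\nu)\geq 3$. The one-point side contains $\partial_{p_1}F_\mu$ for non-hook $\mu\vdash k+l-1$, together with products of such terms. These are hook expressions only by an induction on $n(\lambda)=|\lambda|-\ell(\lambda)$ over \emph{all} partitions, so your Steps 2 and 3 must be merged into a single induction.

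That merged induction is precisely the paper's proof. Hirota's equation at $(k,l-1)$ gives $kF_{k,l}+(l-1)F_{k+1,l-1}=(k+l-1)F_{k+l-1,1}+(\text{terms of smaller } n)$. One then descends in $l$ to $l=2$, and differentiates to increase $\ell(\lambda)$ at fixed $n$.

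Alternatively, you could keep the Lax route, but you would have to import (and check in the formal setting) the nontrivial theorem that $\partial_{t_k}\partial_{t_l}\ln\tau$ equals the local density $H_{kl}(u)$ exactly. That theorem rests on reconstructing $\tau$ from the dressing operator up to a factor $e^{\text{linear}}$.

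Once the constant is handled, the coefficient $1$ needs no test function. Note first that $\operatorname{res}L^k=k\,u_k+\cdots$ (your index is off by one), so $u_{k+l-1}=F_{k+l-1,1}+\cdots$. Then $\operatorname{res}[\partial^l,k\,u_{k+l-1}\partial^{-l}]=kl\,\partial_x u_{k+l-1}$ matches $\partial_{t_k}\partial_{t_l}=kl\,\partial_{p_k}\partial_{p_l}$. Also, $1+\varepsilon s_\mu$ solves even the linearized hierarchy only when $\mu$ is a hook, so two-row test shapes would not work.
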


We will later only need the case where $\lambda=(k,l)$ with $k\geq l\geq 2$. This is the version that is stated in Dubrovin-Natanzon, but the two statements are actually equivalent, as will be clear in the proof below. The proof is constructive and provides an algorithm that calculates the polynomials $\operatorname{KP}_\lambda(\mathcal{H}_{\text{hook}})$ iteratively. Here are the polynomials for non-hook partitions with two parts and of size up to 8, calculated with Sagemath.
\begin{equation*}
	\begin{array}{|c|l|}
		\hline
		\lambda & \operatorname{KP}_\lambda(\mathcal{H}_{\text{hook}})\\
		\hline
		(2,2) &  - \frac{1}{2}H_{1^2}^2 - \frac{1}{12}H_{1^4} + H_{3,1}\\
		(3,2) & -H_{1,1}H_{2,1} - \frac{1}{6}H_{2,1^3} + H_{4,1}\\
		(4,2) & \frac{1}{8}H_{1^3}^2 + \frac{1}{12}H_{1^2} H_{1^4} - \frac{1}{2}H_{2,1}^2 - H_{1^2}H_{3,1} + \frac{1}{120}H_{1^6} - \frac{1}{4}H_{3,1^3} + H_{5,1}\\
		(3,3) & \frac{1}{3}H_{1^2}^3 + \frac{1}{4}H_{1^3}^2 + \frac{1}{3}H_{1^2}H_{1^4} - H_{2,1}^2 - H_{1^2}H_{3,1} + \frac{1}{45} H_{1^6} - \frac{1}{3}H_{3,1^3} + H_{5,1}\\
		(5,2) & \frac{1}{6}H_{1^4}H_{2,1} + \frac{1}{2}H_{1^3}H_{2,1^2} + \frac{1}{6}H_{1^2}H_{2,1^3} - H_{2,1}H_{3,1} - H_{1^2}H_{4,1} + \frac{1}{30}H_{2,1^5} - \frac{1}{3}H_{4,1^3} + H_{6,1}\\
		\hline
		(4,3) & 
		H_{1^2}^2H_{2,1} + \frac{1}{2}H_{1^4}H_{2,1} + H_{1^3}H_{2,1^2} + \frac{2}{3}H_{1^2}H_{2,1^3} - 2H_{2,1}H_{3,1} - H_{1^2}H_{4,1} \\
		& + \frac{1}{12}H_{2,1^5} - \frac{1}{2}H_{4,1^3} + H_{6,1}\\
		\hline
		(6,2) & 
		-\frac{1}{4}H_{1,1} H_{1,1,1}^2 - \frac{1}{12} H_{1^2}^2 H_{1^4} - \frac{11}{72} H_{1^4}^2 - \frac{5}{24} H_{1^3} H_{1^5} - \frac{1}{20} H_{1^2} H_{1^6} + \frac{1}{2} H_{2,1^2}^2 \\
		& + \frac{1}{3} H_{2,1} H_{2,1^3} + \frac{1}{4} H_{1^4} H_{3,1} - \frac{1}{2} H_{3,1}^2 + \frac{3}{4} H_{1^3} H_{3,1^2} + \frac{1}{4} H_{1^2} H_{3,1^3} - H_{2,1} H_{4,1} - H_{1^2} H_{5,1} \\
		&- \frac{1}{252} H_{1^8} + \frac{1}{12} H_{3,1^5} - \frac{5}{12} H_{5,1^3} + H_{7,1}\\
		\hline
		(5,3) & -\frac{3}{4} H_{1^2} H_{1^3}^2 - \frac{1}{3} H_{1^2}^2 H_{1^4} + H_{1^2} H_{2,1}^2 + H_{1^2}^2 H_{3,1} - \frac{13}{36} H_{1^4}^2 - \frac{1}{2} H_{1^3} H_{1^5} \\
		& - \frac{2}{15} H_{1^2} H_{1^6} + H_{2,1^2}^2 + H_{2,1} H_{2,1^3} + \frac{2}{3} H_{1^4} H_{3,1} - H_{3,1}^2  + \frac{3}{2} H_{1^3} H_{3,1^2} + H_{1^2} H_{3,1^3} \\
		&- 2 H_{2,1} H_{4,1} - H_{1^2} H_{5,1} - \frac{1}{105} H_{1^8} + \frac{1}{5} H_{3,1^5} - \frac{2}{3} H_{5,1^3} + H_{7,1}\\
		\hline
		(4,4) & -\frac{1}{4} H_{1^2}^4 - \frac{5}{4} H_{1^2} H_{1^3}^2 - \frac{5}{6} H_{1^2}^2 H_{1^4} + 2 H_{1^2} H_{2,1}^2 + H_{1^2}^2 H_{3,1}  - \frac{25}{48} H_{1^4}^2 - \frac{17}{24} H_{1^3} H_{1^5} \\
		&- \frac{13}{60} H_{1^2} H_{1^6} + \frac{5}{4} H_{2,1^2}^2 + \frac{3}{2} H_{2,1} H_{2,1^3} + \frac{3}{4} H_{1^4} H_{3,1} - \frac{3}{2} H_{3,1}^2 + \frac{7}{4} H_{1^3} H_{3,1^2} \\
		&+ \frac{5}{4} H_{1^2} H_{3,1^3} - 2 H_{2,1} H_{4,1} - H_{1^2} H_{5,1} - \frac{23}{1680} H_{1^8} + \frac{1}{4} H_{3,1^5} - \frac{3}{4} H_{5,1^3} + H_{7,1}\\
		\hline
	\end{array}
\end{equation*}

The rest of this section is devoted to the proof of Proposition \ref{prop:Fij} from Hirota's bilinear equation. This follows \cite{DubrovinNatanzon1989} quite closely, but we still include it because Proposition \ref{prop:Fij} does not seem to be known in the combinatorics community and at the same time it is the key formulation that makes the KP hierarchy exploitable for our purposes. We also offer a fair bit more details of the proof, making it hopefully more accessible to combinatorists. We start with the following lemma. Recall the definition of the Hirota derivative. Let $D_i: f\otimes g = f_i\otimes g - f\otimes g_i$ for $f, g\in R[[\bfp]]$. Moreover, let $\operatorname{ev} (f\otimes g)(\bfp) = f(\bfp) g(\bfp)$ be the pointwise product. Then Hirota's derivative is
\begin{equation*}
	D_{i_1} \dotsm D_{i_n} (f\cdot g)(\bfp) \equiv \operatorname{ev} D_{i_1} \dotsc D_{i_n} (f\otimes g)(\bfp)\ \in R[[\bfp]],
\end{equation*}
where the left hand side is the customary notation (see e.g. \cite{Kac2013Bombay}).

Denote $h_k(\bfp)$ the polynomials from the expansion $e^{\sum_{i\geq 1} \frac{p_i}{i} t^i} = \sum_{k\geq 0} h_k(\bfp) t^k \in\QQ[\bfp][[t]]$. They have an explicit expansion in the $p_i$s: $h_k(\bfp) = \sum_{\lambda} \frac{1}{z_\lambda} p_{\lambda_1} \dotsm p_{\lambda_{\ell(\lambda)}}$ where $z_\lambda = \prod_{i\geq1} i^{m_i(\lambda)} m_i(\lambda)!$ where $m_i(\lambda)$ is the number of parts equal to $i$ in $\lambda$. Finally, we denote $\check{D} = (D_1, 2D_2, 3D_3, \dotsc)$.

\begin{lemma}
	Let $k\geq l\geq 2$, and $\tau(\bfp)$ a KP tau function. Then
	\begin{equation} \label{KP-Reduction}
		\Bigl(k h_{l}(\check{D}) D_k + (l-1) h_{k+1}(\check{D}) D_{l-1} - (k+l-1) D_{k+l-1,1}\Bigr)(\tau\cdot\tau)(\bfp) = 0.
	\end{equation}
\end{lemma}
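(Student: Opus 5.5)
The plan is to derive \eqref{KP-Reduction} from the Hirota bilinear form of the KP hierarchy, written in generating form. In the normalization of this section, $h_j(\check D)$ plays the role of the elementary Schur polynomial in the Hirota derivatives. The bilinear identity then reads
\begin{equation*}
	\sum_{j\geq 0} h_j(-2\check y)\, h_{j+1}(\check D)\, e^{\sum_{i\geq1} y_i D_i}\,(\tau\cdot\tau)(\bfp) = 0,
\end{equation*}
with auxiliary variables $\mathbf y=(y_1,y_2,\dotsc)$ and $\check y = (y_1, 2y_2, 3y_3,\dotsc)$. This identity must hold identically in $\mathbf y$. I will first fix this normalization carefully, checking that the coefficient of $y_3$ reproduces \eqref{KPEq}. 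The lemma will then come from extracting the coefficient of a suitable monomial in $\mathbf y$ and combining it with a lower-order identity. I will use two standard facts: $D_{i_1}\dotsm D_{i_n}(\tau\cdot\tau)=0$ when $n$ is odd, and each monomial of $h_m(\check D)$ has weight $m$.

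\textbf{Linear terms.} Extract the coefficient of a single $y_m$. Two contributions survive:
\begin{itemize}
	\item the $j=0$ term, with $h_1(\check D)=D_1$, times the linear term $y_mD_m$ of the exponential;
	\item the $j=m$ term, via the linear part $-2my_m/m$ of $h_m(-2\check y)$, times $h_{m+1}(\check D)$.
\end{itemize}
Up to an explicit constant, this gives $h_{m+1}(\check D)(\tau\cdot\tau) = c_m\, D_1D_m(\tau\cdot\tau)$. This relation converts the top Schur polynomial into a $D_{m}D_1$ term, which is how $D_{k+l-1,1}$ will appear.

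\textbf{Quadratic terms.} Extract the coefficient of $y_k y_{l-1}$, treating $k\neq l-1$ first and $k=l-1$ separately (symmetry factor $1/2$). The contributions are:
\begin{itemize}
	\item $j=0$: $D_1D_kD_{l-1}$, which comes with an even total number of $D$'s, so it is kept;
	\item $j=k$: the linear part of $h_k(-2\check y)$ times $y_{l-1}D_{l-1}$ from the exponential, giving a multiple of $h_{k+1}(\check D)D_{l-1}$;
	\item $j=l-1$: symmetrically, a multiple of $h_l(\check D)D_k$;
	\item $j=k+l-1$: the quadratic part of $h_{k+l-1}(-2\check y)$, which contains $y_ky_{l-1}$ with coefficient $\tfrac12(-2)^2\cdot 2=4$ in the right normalization, times $h_{k+l}(\check D)$.
\end{itemize}
This yields a relation among $h_l(\check D)D_k$, $h_{k+1}(\check D)D_{l-1}$, $h_{k+l}(\check D)$ and $D_1D_kD_{l-1}$ acting on $\tau\cdot\tau$.

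\textbf{Closing the relation.} Replace $h_{k+l}(\check D)(\tau\cdot\tau)$ by $c_{k+l-1}D_{k+l-1}D_1(\tau\cdot\tau)$ using the linear identity with $m=k+l-1$. The leftover $D_1D_kD_{l-1}$ term must then be absorbed. I expect to do this by also using the coefficient of $y_1y_{k+l-1}$, or another quadratic identity of the same weight, and taking the linear combination that kills it, so as to land on the stated coefficients $k$, $l-1$ and $-(k+l-1)$.

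\textbf{Main obstacle.} The hardest step is this bookkeeping: getting the normalization of the bilinear identity right and finding the exact linear combination of quadratic and linear identities that eliminates the parasitic term while producing precisely these three coefficients. As a sanity check I will first run the case $k=l=2$ and verify that it reproduces \eqref{KPEq} after dividing by $\tau^2$. That same Hirota-to-$F$ conversion is what will later yield $\operatorname{KP}_{(k,l)}$ with leading term $H_{k+l-1,1}$.
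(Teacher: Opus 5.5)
Your overall route works, and it is essentially the paper's computation: you extract monomial coefficients in $\mathbf y$, whereas the paper uses the Cauchy identity $e^{\sum_i q_iD_i}=\sum_\mu h_\mu(-2\bfq)\,m_\mu(-\check D/2)$ and extracts the coefficient of $h_{(k,l-1)}(-2\bfq)$. However, the step you flag as the main obstacle rests on a miscount, and the remedy you propose would not work.

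In the $j=0$ contribution to the coefficient of $y_ky_{l-1}$, the operator $D_1D_kD_{l-1}$ is a product of \emph{three} Hirota derivatives. By the parity fact you yourself quote, it annihilates $\tau\cdot\tau$, so there is no parasitic term to absorb. Had it been there, the coefficient of $y_1y_{k+l-1}$ could not have removed it. That identity has total weight $k+l+1$ (it involves $h_{k+l+1}(\check D)$), while yours has weight $k+l$, so it contains no $D_1D_kD_{l-1}$ term. Also, the case $k=l-1$ never occurs, since $k\geq l$.

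Your normalization also needs fixing before any coefficient is extracted. The factor $h_j(-2\check y)$ must be paired with $e^{\sum_i iy_iD_i}$, not $e^{\sum_i y_iD_i}$. As written, the $y_3$ coefficient gives $(D_1^4+12D_2^2+12D_1D_3)(\tau\cdot\tau)=0$, which has the wrong sign on $D_1D_3$ compared to \eqref{KPEq}.

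With the paper's form $\sum_j h_j(-2\bfq)h_{j+1}(\check D)e^{\sum_i q_iD_i}(\tau\cdot\tau)=0$, the coefficients of $q_m$ and of $q_kq_{l-1}$ give, after dropping the vanishing odd term,
\begin{gather*}
h_{m+1}(\check D)(\tau\cdot\tau)=\tfrac{m}{2}\,D_1D_m(\tau\cdot\tau),\\
\Bigl(\tfrac{2}{k}\,h_{k+1}(\check D)D_{l-1}+\tfrac{2}{l-1}\,h_l(\check D)D_k-\tfrac{4}{k(l-1)}\,h_{k+l}(\check D)\Bigr)(\tau\cdot\tau)=0.
\end{gather*}
Multiply the second relation by $k(l-1)/2$ and substitute the first with $m=k+l-1$. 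This gives exactly the lemma, with coefficients $k$, $l-1$, $-(k+l-1)$, and no further identity is needed.

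In the paper this substitution is built into the choice of basis. The term $D_{k+l-1}D_1$ comes from the $-p_{k+l-1}$ in $m_{(k,l-1)}=p_kp_{l-1}-p_{k+l-1}$. The $p_kp_{l-1}$ part produces the same vanishing operator $D_kD_{l-1}D_1$.
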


\begin{proof}
	The KP hierarchy for the tau function is encoded as a generating set of equations \cite{Kac2013Bombay} known as Hirota's bilinear equations,
	\begin{equation} \label{UsualKP}
		\sum_{j\geq0} h_j(-2\bfq) h_{j+1}(\check{D}) e^{\sum_{i\geq1} q_i D_i} (\tau\cdot\tau)(\bfp) = 0.
	\end{equation}
	The Hirota derivative $D_\lambda(f\cdot g)(\bfp)\coloneqq D_{\lambda_1} \dotsm D_{\lambda_{\ell(\lambda)}}(f\cdot g)(\bfp)$ satisfies
	\begin{equation*}
		D_\lambda (\tau\cdot\tau)(\bfp) = \sum_{\substack{I=\{i_1, \dotsc, i_r\}\\ J=\{j_1, \dotsc, j_s\}\\ I\sqcup J=[\ell(\lambda)]}} \tau_{\lambda_{i_1}, \dotsc, \lambda_{i_r}}(\bfp) (-1)^{s} \tau_{\lambda_{j_1}, \dotsc, \lambda_{j_s}}(\bfp),
	\end{equation*}
	the sum describing all ways of distributing the parts of $\lambda$ into two disjoint sets (the parts being considered as labeled, so that when two parts are equal and one is in $I$ and the other is in $J$, there is another contribution by exchanging them). Either $I$ or $J$ may be empty. For short we may write
	\[D_\lambda (\tau\cdot\tau)(\bfp) = \sum_{I\sqcup J=[\ell(\lambda)]} \tau_{\lambda_I}(\bfp) (-1)^{|J|} \tau_{\lambda_J}(\bfp),
	\]
	with the notation $\tau_{\lambda_I}(\bfp) = \tau_{\lambda_{i_1}, \dotsc, \lambda_{i_r}}(\bfp)$ whenever $I=\{i_1, \dotsc, i_r\}$.
	
	The next equation we need is traditionally set in the context of symmetric functions. For this, the variables $p_i$s are interpreted as power-sums $p_i(\mathbf{x}) = \sum_{k\geq 1} x_k^i$ over an auxiliary alphabet $\mathbf{x}=(x_1, x_2, \dotsc)$. Then $h_k(\bfp(\mathbf{x})) = \sum_{i_1\leq i_2\leq \dotsb} x_{i_1} x_{i_2} \dotsm$ is the $k$-th complete homogeneous symmetric function. The Cauchy identity gives the following expansion
	\begin{equation*}
		e^{\sum_{i\geq 1}\frac{p_i q_i}{i}} = \sum_{\lambda} h_\lambda(\bfp) m_\lambda(\bfq),
	\end{equation*}
	where $h_\lambda(\bfp) = \prod_k h_{\lambda_k}(\bfp)$ and $m_\lambda(\bfp)$ is the monomial symmetric function $m_\lambda$ expressed as function of the power-sums (in other words $m_\lambda(\bfp(\mathbf{x}))$ is the usual monomial symmetric function). We use the Cauchy identity as follows
	\begin{equation*}
		e^{\sum_{i\geq1} q_i D_i} = \sum_{\mu\in\mathbb{Y}} h_{\mu}(-2\bfq) m_\mu(-\check{D}/2).
	\end{equation*}
	Hence
	\begin{equation*}
		\sum_{\lambda\in\mathbb{Y}} h_\lambda(-2\bfq) \sum_{\substack{\mu\in\mathbb{Y}, j\geq 0\\ \mu\cup(j) = \lambda}} m_\mu(-\check{D}/2) h_{j+1}(\check{D}) \tau(\bfp)\cdot\tau(\bfp) = 0,
	\end{equation*}
	and since $h_\lambda(-2\bfq)$ forms a basis of polynomials in $\bfq$ (or equivalently of symmetric functions in an auxiliary alphabet $\mathbf{x}$ such that $\bfq=\bfp(\mathbf{x})$), we deduce for all $\lambda\in\mathbb{Y}$,
	\begin{equation*}
		\sum_{\substack{\mu\in\mathbb{Y}, j\geq 0\\ \mu\cup(j) = \lambda}} m_\mu(-\check{D}/2) h_{j+1}(\check{D}) (\tau\cdot\tau)(\bfp) = 0.
	\end{equation*}
	We now set $\lambda=(k,l-1)$ for $k>l>1$. There are three contributions: $\mu=(k), j=l-1$ and $\mu=(l-1), j=k$ and $\mu=(k,l-1), j=0$,
	\begin{equation*}
		\Bigl(m_{(l-1)}(-\check{D}/2) h_{k+1}(\check{D}) + m_{(k)}(-\check{D}/2) h_{l}(\check{D}) + m_{(k,l-1)}(-\check{D}/2) h_{1}(\check{D})\Bigr)(\tau\cdot\tau)(\bfp) = 0.
	\end{equation*}
	The monomial symmetric functions in the power sum basis are $m_{(k)}(\bfp) = p_k$ and $m_{(i,j)}(\bfp) = p_ip_j-p_{i+j}$. This gives the lemma.
\end{proof}	
	
\begin{proof}[Proof of Proposition \ref{prop:Fij}]
	First, if $\lambda$ is a hook, then there is nothing to prove. The rest of the proof is a more detailed reformulation of Dubrovin-Natanzon \cite{DubrovinNatanzon1989}.
	
	A non-hook partition $\lambda$ has at least two parts larger than 1, $\lambda_1\geq \lambda_2\geq2$. We proceed by induction on $n(\lambda)\coloneqq|\lambda|-\ell(\lambda)\geq 2$. At $n(\lambda) = 2$ there is only $\lambda=(2,2)$. Then the statement of the proposition is simply the KP equation $F_{2,2}(\bfp) = F_{3,1}(\bfp) - \frac{1}{2}F_{1^2}(\bfp)^2 - \frac{1}{12}F_{1^4}(\bfp)$ which is indeed the evaluation of the hook expression $\operatorname{KP}_{2,2}(\mathcal{H}_{\text{hook}}) = H_{3,1} - \frac{1}{2}H_{1^2}^2 - \frac{1}{12}H_{1^4}$.
	
	Then let $n\geq 2$ and assume that the statement holds for all $2\leq n(\lambda)\leq n$. We now prove the statement for partitions $\lambda$ such that $n(\lambda)=n+1$, by induction on $\ell(\lambda)$ starting with the case $\ell(\lambda)=2$, i.e. $\lambda=(k,l)$ with $k,l\geq2$ and $k+l=n+3$. We write \eqref{KP-Reduction} and use the expansion $h_k(\check{D}) = \sum_{\mu\vdash k} (\prod_{i=1}^{\ell(\mu)} \mu_i)/z_\mu D_\mu$. We start with
	\begin{equation*}
		\begin{aligned}
			&h_{k+1}(\check{D}) D_{l-1} (\tau\cdot\tau)(\bfp) = \sum_{\mu\vdash k+1} \frac{\prod_{i=1}^{\ell(\mu)} \mu_i}{z_\mu} D_{\mu, l-1} (\tau\cdot\tau)(\bfp)\\
			&= \tau(\bfp)^2 \sum_{\substack{\mu\vdash k+1\\ \tilde{\mu}\equiv\mu\cup(l-1)}} \frac{\prod_{i=1}^{\ell(\mu)} \mu_i}{z_\mu} \sum_{I\sqcup J=[\ell(\tilde{\mu})]} (-1)^{|J|} \sum_{X_1\sqcup \dotsb\sqcup X_r = I} \sum_{Y_1\sqcup \dotsb\sqcup Y_s = J} \prod_{i=1}^{r} F_{\tilde{\mu}_{X_i}}(\bfp) \prod_{j=1}^{s} F_{\tilde{\mu}_{Y_j}}(\bfp).
		\end{aligned}
	\end{equation*}
	Here $X_1\sqcup \dotsb\sqcup X_r$ and $Y_1\sqcup \dotsb\sqcup Y_s$ denote set partitions (the $X_i$s and the $Y_j$s are unordered). As a convention, when $I$ is empty, the sum over the set partitions $X_1\sqcup \dotsb\sqcup X_r = I$ can be understood as having a single element (the empty set), and $F_\emptyset(\bfp)\equiv 1$ (and similarly when $J=\emptyset$).
	
	All contributions take the form $\prod_{i=1}^{r} F_{\tilde{\mu}_{X_i}}(\bfp) \prod_{j=1}^{s} F_{\tilde{\mu}_{Y_j}}(\bfp)$ and we have to analyze the values of $n(\tilde{\mu}_{X_i})$ and $n(\tilde{\mu}_{Y_j})$. By symmetry we can consider only $n(\tilde{\mu}_{X_i})$. For any $m\in\{1, \dotsc, r\}$, $n(\tilde{\mu}_{X_m}) \leq \sum_{i=1}^r n(\tilde{\mu}_{X_i})$, so it is enough to study the values taken by this sum,
	\begin{equation*}
		\sum_{i=1}^r n(\tilde{\mu}_{X_i}) = \sum_{i=1}^r |\tilde{\mu}_{X_i}| - \sum_{i=1}^r \ell(\tilde{\mu}_{X_i}).
	\end{equation*}
	There are two cases, depending on whether $\tilde{\mu}_{X_1}\cup \dotsb \cup \tilde{\mu}_{X_r}$ contains the special part $l-1$ or not. If not, then $\sum_{i=1}^r |\mu_{X_i}|\leq k+1$, hence
	\begin{itemize}
		\item if $\sum_{i=1}^r \ell(\tilde{\mu}_{X_i})=1$, then $r=1$ and $\tilde{\mu}_{X_1}$ has a single part and $F_{\tilde{\mu}_{X_1}}(\bfp)$ is given by a hook expression,
		\item if $\sum_{i=1}^r \ell(\tilde{\mu}_{X_i})\geq 2$, then $\sum_{i=1}^r n(\tilde{\mu}_{X_i}) \leq k-1 = n+2-l\leq n$. Therefore by the induction hypothesis, there exists a hook expression for each $F_{\tilde{\mu}_{X_i}}(\bfp)$.
	\end{itemize}
	Now consider the case where $\tilde{\mu}_{X_1}\cup \dotsb \cup \tilde{\mu}_{X_r}$ contains the special part $l-1$, and thus $\sum_{i=1}^r |\mu_{X_i}|\leq k+l$ and
	\begin{itemize}
		\item if $\sum_{i=1}^r \ell(\tilde{\mu}_{X_i})=1$, then $r=1$ and $\tilde{\mu}_{X_1}$ has a single part equal to $l-1$ and $F_{\tilde{\mu}_{X_1}}(\bfp)$ is given by a hook expression,
		\item if $\sum_{i=1}^r \ell(\tilde{\mu}_{X_i})=2$, then either $r=2$ with $\ell(\tilde{\mu}_{X_1}) = \ell(\tilde{\mu}_{X_2}) = 1$ and $F_{\tilde{\mu}_{X_1}}(\bfp), F_{\tilde{\mu}_{X_2}}(\bfp)$ are both given by hook expressions, or $r=1$ with $\ell(\tilde{\mu}_{X_1}) = 2$. In the latter case, the partition $\tilde{\mu}_{X_1}$ is of the form $(\mu_1, l-1)$ (or the other way around) and $n(\tilde{\mu}_{X_1}) = \mu_1 + l-3$. As long as $\mu_1<k+1$, it comes $n(\tilde{\mu}_{X_1}) <n+1$ and by induction there is a hook expression. However if $\mu_1=k+1$, then one gets a contribution $F_{k+1, l-1}(\bfp)$.
		\item if $\sum_{i=1}^r \ell(\tilde{\mu}_{X_i})> 2$, then $\sum_{i=1}^r n(\tilde{\mu}_{X_i}) \leq k+l-3 \leq n$. Therefore by the induction hypothesis, there exists a hook expression for each $F_{\tilde{\mu}_{X_i}}(\bfp)$.
	\end{itemize}
	Overall this gives
	\begin{equation*}
		h_{k+1}(\check{D}) D_{l-1} (\tau\cdot\tau)(\bfp) = \tau(\bfp)^2 \bigl(2F_{k+1, l-1}(\bfp) + \Phi_F\text{hook expression}(\bfp)\bigr).
	\end{equation*}
	(the factor two comes from interchanging the roles of the $X_i$s and the $Y_j$s). Here ``$\text{hook expression}(\bfp)$'' means that all occurences of $H_\lambda$ in the hook expression is substituted with $F_{\lambda}(\bfp)$. Similarly,
	\begin{equation*}
		h_{l}(\check{D}) D_{k} (\tau\cdot\tau)(\bfp) = \tau(\bfp)^2 \bigl(2F_{k, l}(\bfp) + \text{hook expression}(\bfp)\bigr).
	\end{equation*}
	The last term of \eqref{KP-Reduction} is $D_{k+l-1,1} (\tau\cdot\tau)(\bfp) = 2\tau(\bfp)^2 F_{k+l-1, 1}(\bfp)$ which is a hook expression. Overall Equation \eqref{KP-Reduction} reduces to
	\begin{equation} \label{Induction}
		kF_{k,l}(\bfp) = -(l-1)F_{k+1, l-1}(\bfp) + \text{hook expression}(\bfp).
	\end{equation}
	If $l=2$, we interpret it as a hook expression on $\QQ[\mathcal{H}_{\text{hook}}]$ (by replacing all $F_\mu(\bfp)$ with $H_\mu$), and otherwise one performs a simple induction to decrease the value of $l$ down to 2.
	
	Let us prove that $\operatorname{KP}_{(k,l)}(\mathcal{H}_{\text{hook}})$ is homogeneous of degree $k+l$. First, $h_k(\bfp)$ is homogenous of degree $k$ with respect to the degree defined by having $\prod_{i=1}^\ell p_{\lambda_i}$ of degree $|\lambda| = \sum_{i=1}^\ell \lambda_i$. Moreover, it is clear from $D_k \tau\otimes \tau = F_k\tau\otimes \tau - \tau\otimes F_k\tau$ that the Hirota derivative $D_\lambda(\tau\cdot\tau)(\bfp)$ only produces terms of degree $|\lambda|$.
	
	Now we prove that the first part of any hook appearing in $\operatorname{KP}_{(k,l)}(\mathcal{H}_{\text{hook}})$ is less or equal to $n((k,l)) +1 = k+l-1$. From homogeneity, and because there is no negative contribution to the degree, we get the bound $k+l$. However $H_{k+l}$ would have to come from $D_{k+l}(\tau\cdot\tau)(\bfp)$, which actually vanishes for any $\tau(\bfp)$ because it has an odd number of Hirota derivatives. The largest first part of any $H_\mu$ in $\operatorname{KP}_{(k,l)}(\mathcal{H}_{\text{hook}})$ is thus $k+l-1$.
	
	One can further easily track the coefficient of $H_{k+l-1,1}$ in the final hook expression. Indeed, by making explicit the contribution of $D_{k+l-1,1} (\tau\cdot\tau)(\bfp)$ to \eqref{Induction}, the latter becomes $kF_{k,l}(\bfp) = -(l-1)F_{k+1, l-1}(\bfp) + (k+l-1)F_{k+l-1,1}(\bfp) + \text{hook expression}'(\bfp)$, where the prime indicates that the hook expression does not contain any $F_{k+l-1,1}(\bfp)$. In the special case $l=2$, this gives $F_{k,l}(\bfp) = F_{k+l-1,1}(\bfp) + \text{hook expression}'(\bfp)$ and by induction one finds that the coefficient is always 1.
	
	Finally, one performs an induction on $\ell(\lambda)$ at fixed $n(\lambda)$. Let $\lambda=(\lambda_1, \dotsc, \lambda_\ell)$ and $\lambda'=(\lambda_1, \dotsc, \lambda_{\ell-1})$. Since $n(\lambda')\leq n(\lambda)$, there exists $\operatorname{KP}_{\lambda'}(\mathcal{H}_{\text{hook}})$ such that $F_{\lambda'}(\bfp) = \Phi_F \operatorname{KP}_{\lambda'}(\mathcal{H}_{\text{hook}})(\bfp)$. As an expression in $R[[\bfp]]$, the RHS can be differentiated  with respect to $p_{\lambda_\ell}$. Note that $\operatorname{KP}_{\lambda'}(\mathcal{H}_{\text{hook}})(\bfp)$ has no explicit dependence on $p_\ell$. Therefore substituting back $H_\mu$ for $F_{\mu}(\bfp)$ in $P(\bfp)\equiv \frac{\partial}{\partial p_{\lambda_\ell}}\Phi_F \operatorname{KP}_{\lambda'}(\mathcal{H}_{\text{hook}})(\bfp)$ gives a polynomial in $\mathcal{H}$.
	
	If $\lambda_\ell=1$ this results directly in a hook expression. If not, $P(\bfp)$ is found to have terms $F_{\mu\cup \lambda_\ell}(\bfp)$ where $\mu=(i,1^j)$ is a a hook. By induction, $i\leq n(\lambda')+1$ then
	\begin{equation*}
		n((i\cup \lambda_\ell)) = i+\lambda_{\ell} - 2 \leq n(\lambda).
	\end{equation*}
	Therefore the induction assumption gives $\operatorname{KP}_{(i\cup\lambda_\ell)}(\mathcal{H}_{\text{hook}})$ such that 
	\begin{equation*}
		F_{(i\cup\lambda_\ell)}(\bfp) = \Phi_F \operatorname{KP}_{(i\cup\lambda_\ell)}(\mathcal{H}_{\text{hook}})(\bfp).
	\end{equation*}
	It comes that 
	\begin{equation*}
		F_{(\mu\cup\lambda_\ell)}(\bfp) = \frac{\partial^j}{\partial p_1^j} \Phi_F \operatorname{KP}_{(i\cup\lambda_\ell)}(\mathcal{H}_{\text{hook}})(\bfp) = \Phi_F \operatorname{KP}_{(\mu\cup\lambda_\ell)}(\mathcal{H}_{\text{hook}})(\bfp)
	\end{equation*}
	where the second equality defines the polynomial $\operatorname{KP}_{(\mu\cup\lambda_\ell)}(\mathcal{H}_{\text{hook}})$. Finally we check that the largest part appearing in $\operatorname{KP}_{(\mu\cup\lambda_\ell)}(\mathcal{H}_{\text{hook}})$ is $i+\lambda_\ell-1\leq n(\lambda)+1$.
\end{proof}

\section{Bipartite maps} \label{sec:Maps}

From now on, $F(t,u,v,\bfp) = \ln \tau(t,u,v,\bfp)$ is the generating function of connected, edge-labeled, bipartite maps, counted by number of edges with $t$, number of white vertices with $u$, black vertices with $v$ and number of faces of degree $i$ with $p_i$, and $F_{\lambda_1, \lambda_2, \dotsc}(t,u,v,\mathbf{p}) = \frac{\partial F}{\partial p_{\lambda_1} \partial p_{\lambda_2} \dotsm}(t,u,v,\mathbf{p})$. Our analysis relies on one side on the following theorem.

\begin{thm}[Goulden-Jackson \cite{GouldenJackson2008}]
	$\tau(t,u,v,\bfp)$ is a KP tau function.
\end{thm}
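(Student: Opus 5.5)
The plan is to write $\tau(t,u,v,\bfp)$ as a series of Schur functions whose coefficients factorize over the cells of the diagram (a \emph{content product}), and then invoke the fact that such series are KP tau functions.

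The first step is the standard encoding of edge-labeled bipartite maps with $n$ edges by triples of permutations $(\sigma_\bullet,\sigma_\circ,\varphi)\in S_n^3$ with $\sigma_\bullet\sigma_\circ\varphi=\mathrm{id}$. Here the cycles of $\sigma_\bullet$ (resp. $\sigma_\circ$) are the black (resp. white) vertices, and the cycle type of $\varphi$ gives the face degrees $\lambda$; connectivity is not required, which matches \eqref{BipTau}. This gives
\begin{equation*}
\tau=\sum_{n\geq0}\frac{t^n}{n!}\,[\mathrm{id}]\;\Bigl(\sum_{\sigma\in S_n}u^{c(\sigma)}\sigma\Bigr)\Bigl(\sum_{\sigma\in S_n}v^{c(\sigma)}\sigma\Bigr)\Bigl(\sum_{\varphi\in S_n}p_{\mathrm{type}(\varphi)}\varphi\Bigr)
\end{equation*}
in the group algebra $\QQ[S_n]$. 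The first two factors are central and equal $\prod_{i=1}^n(u+J_i)$, where the $J_i$ are the Jucys--Murphy elements. On the isotypic component of $\lambda\vdash n$ this acts by the scalar $\prod_{\square\in\lambda}(u+c(\square))$, where $c$ denotes the content of a cell. Expanding the third factor in central idempotents via Frobenius' formula $p_\mu=\sum_\lambda\chi^\lambda(\mu)s_\lambda$, and taking $[\mathrm{id}]$ (which picks up $(\dim\lambda)^2/n!$ per idempotent), I expect to obtain
\begin{equation*}
\tau(t,u,v,\bfp)=\sum_{\lambda\in\YY}\frac{t^{|\lambda|}}{H_\lambda}\,s_\lambda(\bfp)\prod_{\square\in\lambda}(u+c(\square))(v+c(\square)),
\end{equation*}
where $H_\lambda$ is the hook product. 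The factorials should be checked carefully with $\dim\lambda=n!/H_\lambda$.

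The second step is to recognize the coefficients. We have $s_\lambda(1,0,0,\dotsc)=1/H_\lambda$, and the factor $t^{|\lambda|}\prod_\square(u+c)(v+c)$ is of the form $\prod_{\square}r(c(\square))$ with $r(x)=t(u+x)(v+x)$. The series is therefore $\sum_\lambda r_\lambda\, s_\lambda(\mathbf{1})s_\lambda(\bfp)$ with $\mathbf{1}=(1,0,0,\dotsc)$, a hypergeometric (Orlov--Scherbin) tau function.

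The third step, which is the main obstacle, is proving that this series satisfies Hirota's equations \eqref{UsualKP}. I would use the boson--fermion correspondence. The function $\sum_\lambda s_\lambda(\mathbf{1})s_\lambda(\bfp)=e^{p_1}$ is the trivial KP tau function, the image of $e^{\alpha_{1}}|0\rangle$, and it trivially satisfies KP. Multiplication of the coefficient of $s_\lambda$ by $\prod_\square r(c(\square))$ is implemented by a diagonal operator $\exp\bigl(\sum_k \theta_k :\!\psi_k\psi_k^*\!:\bigr)$, where $r(k)=e^{\theta_k-\theta_{k-1}}$; this requires working formally in $t$ and with generic $u,v$. Such an operator lies in $\widehat{GL}_\infty$ and so preserves the Plücker relations, i.e.\ the KP hierarchy. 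An alternative route to the same conclusion, if the fermionic machinery is to be avoided, is to check directly via the Cauchy--Binet and Plücker form of the coefficients: the coefficients $r_\lambda s_\lambda(\mathbf{1})$ are, up to a row-dependent factor, determinants $\det\bigl(a_{i}(\lambda_j-j)\bigr)$ of the type required by Sato's characterization. I expect the delicate points to be making the diagonal group-element argument rigorous over $\QQ[u,v][[t]]$, and matching sign and normalization conventions so that exactly \eqref{UsualKP} holds.
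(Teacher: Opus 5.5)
The paper gives no proof of its own and only cites Goulden--Jackson, and your proposal is essentially their argument. Your character computation giving $\tau=\sum_{\lambda\in\YY}t^{|\lambda|}s_\lambda(1,0,0,\dotsc)\,s_\lambda(\bfp)\prod_{\square\in\lambda}(u+c(\square))(v+c(\square))$ is correct, and your ``alternative route'' is exactly their content-product theorem: a content product preserves the Pl\"ucker relations satisfied by the coefficients $s_\lambda(1,0,0,\dotsc)$ of the trivial tau function $e^{p_1}$. The fermionic version of that step is Orlov--Shcherbin's.

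The delicate point you flag disappears in the Pl\"ucker formulation. The number of cells of content $k$ in $\lambda$ equals $\#\{i\geq 1:\lambda_i-i\geq k\}-\#\{i\geq 1:-i\geq k\}$. So $\prod_{\square\in\lambda}r(c(\square))\prod_{\square\in\mu}r(c(\square))$ depends only on the multiset union of the positions $\{\lambda_i-i\}_{i\geq1}$ and $\{\mu_i-i\}_{i\geq1}$, which is the same for every term of a given Pl\"ucker relation. Hence no logarithms or inverses of $t(u+k)(v+k)$ are needed.
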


On the other side, the key equations which actually determine the tau function are the Virasoro constraints. Recall that $\pi_d$ is the map that evaluates functions on $\pi_d(\bfp) = (p_1, \dotsc, p_d, 0, 0, \dotsc)$, i.e. $\pi_d(p_i) = 0$ for $i>d$, and $F^{\pi_d}_\lambda\equiv \Fd_\lambda(t,u,v,\pi_d(\bfp)) \coloneqq {\pi_d}F_\lambda(t,u,v,\bfp) \in\mathbb{K}_d[[t]]$. We also denote $\pi_d\mathcal{F} = \{\Fd_\lambda\}_{\lambda\in\YY}$ and similarly for $\pi_d\mathcal{F}_{\text{hook}}$. From here on out, \emph{we will stop distinguishing the formal variables $H_\lambda$s and their specialization to $\Fd_\lambda$s}. We hope no confusion will arise from this choice.

\subsection{Virasoro constraints}
\begin{thm}[Virasoro constraints -- Folklore] \label{thm:Virasoro}
	For all $k\geq 0$ and $\lambda = (\lambda_1, \dotsc, \lambda_\ell)\in\YY$, the set $\{F^{\pi_d}_\lambda\}_{\lambda\in\YY}$ satisfies the following set of algebraic equations known as Virasoro constraints
	\begin{multline} \label{AllVirasoro}
		(k+1)\Fd_{\lambda,k+1} = t\sum_{\substack{i,j\geq 1\\i+j=k}} ij\Bigl(\Fd_{\lambda,i,j} + \sum_{I\sqcup J=\{1, \dotsc,\ell\}} \Fd_{\lambda_I,i} \Fd_{\lambda_J,j}\Bigr) + t\sum_{i=1}^d (i+k)p_i \Fd_{\lambda,i+k} \\+ t\sum_{s=1}^\ell (\lambda_s + k)\Fd_{\lambda-\lambda_s,\lambda_s+k} + t(u+v)k\Fd_{\lambda,k} + tuv \delta_{k,0} \delta_{\lambda,\emptyset}.
	\end{multline}
	Here we have denoted $\lambda_I = (\lambda_{i_1}, \dotsc, \lambda_{i_a})$ for a list $I = (i_1\geq \dotsb\geq i_a)\subset \{1, \dotsc, \ell\}$ the partition obtained by keeping only the parts of $\lambda$ with indices in $I$, and $I\sqcup J$ is a sum over set bipartitions of $\{1,\dotsc, \ell\}$. Finally, we denote $\lambda-\lambda_s$ the partition $\lambda$ whose part $\lambda_s$ is removed.
\end{thm}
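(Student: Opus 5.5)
The plan is to derive \eqref{AllVirasoro} in three stages. First I will prove a single Tutte-type equation on the disconnected generating function $\tau(t,u,v,\bfp)$ by deleting the root edge. Next I will pass to $F=\ln\tau$ and differentiate with respect to $p_{\lambda_1},\dotsc,p_{\lambda_\ell}$. Finally I will apply $\pi_d$.

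\emph{Stage 1: root-edge deletion on $\tau$.} For $k\geq 0$, the quantity $(k+1)\partial_{p_{k+1}}\tau$ counts edge-labeled bipartite maps with one face of degree $k+1$ distinguished and rooted at one of its $k+1$ white corners. Let $e$ be the edge leaving the root corner along the face. I will delete $e$. Since the maps carry the exponential weight $t^n/n!$, removing a labeled edge and relabeling the remaining $n-1$ edges produces the overall factor $t$. The cases are the standard Tutte cases:
\begin{enumerate}[label=(\roman*)]
\item \emph{Both endpoints of $e$ have degree $1$.} Then $e$ is an isolated edge, which forces $k=0$, and gives the term $tuv\,\tau$.
\item \emph{Exactly one endpoint has degree $1$.} Deleting $e$ and that leaf vertex leaves a rooted face of degree $k$. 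This gives $t(u+v)k\,\partial_{p_k}\tau$, where the factor $k$ counts the possible roots of the new face.
\item \emph{The two sides of $e$ lie in distinct faces.} The second face has some degree $i$, and deleting $e$ merges the two faces into one face of degree $i+k$. Re-rooting this merged face gives $t\sum_i (i+k)p_i\,\partial_{p_{i+k}}\tau$.
\item \emph{Both sides of $e$ lie in the same face.} Deleting $e$ splits it into two rooted faces of degrees $i,j$ with $i+j=k$. This gives $t\sum_{i+j=k} ij\,\partial_{p_i}\partial_{p_j}\tau$.
\end{enumerate}
I expect the result to be the operator identity $L_k\tau=0$ with
\[
L_k=-(k+1)\partial_{p_{k+1}}+t\sum_{i+j=k}ij\,\partial_{p_i}\partial_{p_j}+t\sum_{i\geq1}(i+k)p_i\partial_{p_{i+k}}+t(u+v)k\partial_{p_k}+tuv\delta_{k,0}.
\]
The real work in this stage is checking that each reverse construction is a bijection with exactly the stated multiplicities, in particular the factors $ij$ and $i+k$ for the choice of roots and corners. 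I expect this bookkeeping, especially the degenerate cases and the distinction between white and black leaves, to be the main obstacle. I would verify it first on small $n$ against the expansion of \eqref{BipTau}.

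\emph{Stage 2: pass to $F$ and differentiate.} Writing $\tau=e^F$ turns $\tau^{-1}\partial_{p_i}\partial_{p_j}\tau$ into $F_{i,j}+F_iF_j$, while every other term stays linear in $F$. I will then apply $\partial_{p_{\lambda_1}}\dotsm\partial_{p_{\lambda_\ell}}$ to the resulting equation.
\begin{itemize}
\item By Leibniz's rule, the product $F_iF_j$ produces $\sum_{I\sqcup J}F_{\lambda_I,i}F_{\lambda_J,j}$. Combinatorially, this says that the marked faces are distributed between the two components created in case (iv).
\item The explicit factor $p_i$ in the third term is hit by $\partial_{p_{\lambda_s}}$ for exactly one $s$. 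This produces $(\lambda_s+k)F_{\lambda-\lambda_s,\lambda_s+k}$, which is the case where the face merged with the root face is one of the marked faces.
\item The term $tuv\,\delta_{k,0}$ survives only when $\lambda=\emptyset$, which gives $tuv\,\delta_{k,0}\delta_{\lambda,\emptyset}$.
\end{itemize}

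\emph{Stage 3: specialize with $\pi_d$.} The map $\pi_d$ sets $p_i=0$ for $i>d$. It commutes with differentiation in the remaining variables, and it truncates $\sum_{i\geq1}(i+k)p_i\,\Fd_{\lambda,i+k}$ to the range $i\leq d$. The result is exactly \eqref{AllVirasoro}.
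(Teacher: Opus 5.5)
Your proposal is correct and follows the paper's own route. The paper deletes the edge following the root, Bender--Canfield style, to get a linear equation on $\tau$; your $L_k\tau=0$ is the paper's \eqref{VirasoroTau} with $p_k^*=k\,\partial_{p_k}$ (and without the stray extra factors of $t$ on its last two terms), and your case split by the faces of $\mathfrak{m}$ is the paper's split by the faces of $\mathfrak{m}'$, seen from the other side. Your Stages~2 and~3 (dividing by $\tau$, using Leibniz to produce the $I\sqcup J$ and $\lambda_s$ terms, then applying $\pi_d$) spell out the passage to \eqref{AllVirasoro} that the paper leaves implicit.
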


For reference, let us write the constraints for $\lambda=\emptyset$,
	\begin{equation} \label{Virasoro}
	(k+1)\Fd_{k+1} = t\sum_{\substack{i,j\geq 1\\i+j=k}} ij(\Fd_i \Fd_j + \Fd_{i,j}) + t\sum_{i=1}^d (i+k)p_i \Fd_{i+k} + t(u+v)k\Fd_k + tuv \delta_{k,0}.
	\end{equation}	
	
\begin{proof}
	Most references deal with general maps, as opposed to bipartite maps, but all methods work equally in this case. The earliest reference is Bender and Canfield \cite{BenderCanfield1986}. The names ``Virasoro constraints'' and ``loop equations'' originate from the physics literature, where these equations are also well-known from the representation of $\tau(t,u,v,\bfp)$ as a formal matrix integral \cite{AmbjornKristjansenMakeenko} (and see \cite{Eynard:book} for the method -- these equations are also known as loop equations). 
	
	Here we use the notation $p_k^*=k\frac{\partial}{\partial p_k}$ and we sketch the proof of Proposition \ref{thm:Virasoro} combinatorially, following \cite{BenderCanfield1986} but in the bipartite case. The Virasoro constraints are equivalent to a linear equation,
	\begin{equation} \label{VirasoroTau}
		p_{k+1}^*\tau(t,u,v,\bfp) = t\biggl(\sum_{i+j=k} p_i^* p_j^* + \sum_{i\geq 1} p_i p_{i+k}^* + t(u+v)p_k^* + tuv \delta_{k,0}\biggr)\tau(t,u,v,\bfp).
	\end{equation}
	Notice that $p_{k+1}^*\tau(t,u,v,\bfp)$ picks up a face of degree $k+1$, removes its weight $p_{k+1}$, and multiplies by the number $k+1$ of white corners of the face. Therefore, the left hand side of \eqref{VirasoroTau} counts the number of rooted bipartite maps $\mathfrak{m}$ (connected or not) such that the root face has degree $k+1$ (the degree of a face is the number of incident white, or black, corners).
	
	For the RHS of \eqref{VirasoroTau}, we consider the map $\mathfrak{m}'$ obtained from $\mathfrak{m}$ by first orienting the root clockwise then removing the edge $e$ that follows the root clockwise and removing the isolating vertices if nay. It has one less edge, and we consider the effect of adding back $e$ on the faces of $\mathfrak{m}'$. The possibilities are the following.
	\begin{itemize}
		\item The root component of $\mathfrak{m}'$ is empty, meaning that $\mathfrak{m}$ has just one edge two vertices, and a face of degree 1, hence $k=0$, and total weight $tuv$,
		\item $e$ connects a vertex of $\mathfrak{m}'$ to a leaf in $\mathfrak{m}$. In terms of generating function it is $t(u+v) p_k^*\tau(t,u,v,\bfp)$.
		\item $e$ connects two corners in the same face. Therefore it splits this face of $\mathfrak{m}'$ of degree $i+k$ for some $i\geq 1$ into a face of degree $i$ and another of degree $k+1$. In terms of generating function it is $t\sum_{i\geq 1} p_i p_{i+k}^*\tau(t,u,v,\bfp)$.
		\item $e$ connects two corners of distinct faces in $\mathfrak{m}'$, whose degrees are $i$ and $j$ such that $i+j=k$. In terms of generating function it is $t\sum_{i+j=k} p_i^* p_j^*\tau(t,u,v,\bfp)$.
	\end{itemize}
\end{proof}


Although we will need to be more specific, we want to point out the following proposition.
\begin{prop} \label{prop:VirasoroReduction}
	Let $\bar{\mathbb{K}}_d = \QQ[t,t^{-1},u,v,\pi_d(\bfp), p_d^{-1}]$ and
	\begin{equation*}
		\mathcal{F}_{<d} \coloneqq \{\Fd_\lambda| \forall i\in\{1, \dotsc, \ell(\lambda)\}\ \lambda_i\leq d-1\}.
	\end{equation*}
	Then for any partition $\mu$, $\Fd_\mu \in \bar{\mathbb{K}}_d[\mathcal{F}_{<d}]$.
\end{prop}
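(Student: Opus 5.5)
We argue by strong induction on a suitable weight of $\mu$, using the Virasoro constraints \eqref{AllVirasoro} solved for the term carrying the largest index. The key observation is that in \eqref{AllVirasoro} with $\lambda$ arbitrary and $k\geq 0$, the sum $t\sum_{i=1}^d (i+k)p_i\Fd_{\lambda,i+k}$ contains the term $t(d+k)p_d\Fd_{\lambda,d+k}$. Since $t$ and $p_d$ are invertible in $\bar{\mathbb{K}}_d$, we can solve for $\Fd_{\lambda,d+k}$:
\begin{equation*}
t(d+k)p_d\Fd_{\lambda,d+k} = (k+1)\Fd_{\lambda,k+1} - t\sum_{\substack{i,j\geq 1\\ i+j=k}} ij\Bigl(\Fd_{\lambda,i,j}+\sum_{I\sqcup J}\Fd_{\lambda_I,i}\Fd_{\lambda_J,j}\Bigr) - t\sum_{i=1}^{d-1}(i+k)p_i\Fd_{\lambda,i+k} - \dotsb
\end{equation*}
where the dots are the terms $t\sum_s(\lambda_s+k)\Fd_{\lambda-\lambda_s,\lambda_s+k}$, $t(u+v)k\Fd_{\lambda,k}$ and $tuv\delta_{k,0}\delta_{\lambda,\emptyset}$. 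Setting $m=d+k\geq d$, this expresses $\Fd_{\lambda,m}$ through series indexed by partitions in which the part $m$ has been replaced by parts that are all strictly smaller than $m$ (namely $k+1$, $i,j$ with $i+j=k$, and $i+k<m$), or by terms where a part of $\lambda$ has been increased and $m$ removed, namely $\Fd_{\lambda-\lambda_s,\lambda_s+k}$.

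The order on partitions is chosen to make the last type of term harmless. Define the weight of $\mu$ as the multiset of its parts that are $\geq d$, compared first by total size $\sum_{\mu_i\geq d}\mu_i$ and then lexicographically, or more simply the pair $(N(\mu),|\mu|)$ with $N(\mu)=\sum_{\mu_i\geq d}(\mu_i-d+1)$ the total excess over $d-1$. Take $\mu$ having some part $m\geq d$, write $\mu=\lambda\cup(m)$, and apply the identity above. The terms $\Fd_{\lambda,k+1}$, $\Fd_{\lambda,i+k}$ with $i<d$, $\Fd_{\lambda,k}$ and $\Fd_{\lambda,i,j}$, together with the products $\Fd_{\lambda_I,i}\Fd_{\lambda_J,j}$, replace $m$ by parts each $\leq m-1$. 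These new parts either lie below $d$ or have a strictly smaller excess; in the case $i+j=k$ the sum of excesses of $i,j$ is at most $\max(0,k-d+2)<k+1=m-d+1$. Hence $N$ strictly decreases, and in the product terms the two factors each have $N$ at most that of the original. The delicate term is $\Fd_{\lambda-\lambda_s,\lambda_s+k}$, which trades the parts $\lambda_s,m$ for the single part $\lambda_s+m-d$. If $\lambda_s\leq d-1$, the excess becomes $\lambda_s+k-d+1\leq k<k+1$, so $N$ decreases. If $\lambda_s\geq d$, the excess $(\lambda_s+k-d+1)$ equals the old total $(\lambda_s-d+1)+(k+1)$ minus $1$, so $N$ again decreases strictly.

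Thus every term on the right-hand side has strictly smaller $N$. Since $N\geq 0$ and $N(\mu)=0$ exactly when $\mu\in\mathcal{F}_{<d}$ indexing-wise, induction on $N$ gives $\Fd_\mu\in\bar{\mathbb{K}}_d[\mathcal{F}_{<d}]$. The base case is immediate because such $\mu$ are already in $\mathcal{F}_{<d}$. The main point to check carefully is this case analysis showing that $N$ strictly decreases in every term, especially the $\lambda_s$-shift term and the quadratic term. Polynomiality is preserved because we only multiply and divide by $t$, $p_d$ and integers $d+k\neq0$, and $\bar{\mathbb{K}}_d$ contains $\QQ$.
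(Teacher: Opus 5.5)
Your proof is correct and follows the paper's argument: isolate the $i=d$ term $t(d+k)p_d\Fd_{\lambda,d+k}$ in \eqref{AllVirasoro}, divide by the invertible factor $t(d+k)p_d$, and induct. The only difference is the induction measure. The paper inducts on $|\mu|$, which already drops strictly in every term: by at least $d$ for the quadratic, $\lambda_s$-shift and $\Fd_{\lambda,k}$ terms, by $d-1$ for $\Fd_{\lambda,k+1}$, and by $d-i\geq 1$ for $\Fd_{\lambda,i+k}$. So your excess $N(\mu)$ and its case analysis are valid but not needed. Your attention to the $\lambda_s$-shift term is nonetheless well founded, since there the new part $\lambda_s+k$ can be $\geq d+k$, contrary to the paper's loose remark that $k'<\mu_a$.
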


\begin{proof}
	The proof is an induction on $|\mu|$. If $\Fd_\mu$ has a part larger than $d-1$, say $\mu_a = k+d$ with $k\geq0$, then $\mu=\lambda\cup (k+d)$ and $\Fd_\mu$ can be extracted from \eqref{AllVirasoro} from the term $t\sum_{i=1}^d (i+k)p_i \Fd_{\lambda,i+k}$ with $i=d$. This expresses $\Fd_\mu$ in terms of $\Fd_{\nu \cup k'}$s where $\nu$ contains a subset of the parts of $\mu$ except for $\mu_a$, and $k'<\mu_a$. All these partitions have size less than $\mu$, since $|\nu| + k'<|\mu|$. Then one can perform the induction.
\end{proof}

Notice that there is explicit factor $t$ on the RHS of the Virasoro constraints, which translates the fact that the combinatorial decomposition is based on removing an edge. This explains why $t^{-1}$ is needed in Proposition \ref{prop:VirasoroReduction}.

\subsection{Strategy} Another important equation is the following.
\begin{prop}
	One has
	\begin{equation} \label{Homogeneity}
		t\frac{\partial F}{\partial t}(t,u,v,\bfp) = \sum_{i\geq 1} ip_i F_i(t,u,v,\bfp).
	\end{equation}
\end{prop}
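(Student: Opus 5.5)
The plan is to argue by homogeneity, using the explicit form of the generating function \eqref{BipTau}. Every map counted by $M_{V_\bullet,V_\circ,\lambda}(n)$ has $\lambda\vdash n$: the face degrees count white corners, and the total number of white corners equals the number of edges. So each monomial $t^n p_{\lambda_1}\dotsm p_{\lambda_\ell}$ in $\tau$ satisfies $|\lambda| = n$. Hence $\tau(t,u,v,\bfp)$ is invariant under the rescaling $t\mapsto t/s$, $p_i\mapsto s^i p_i$ for a formal parameter $s$.

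Concretely, I would introduce the Euler-type operator
\begin{equation*}
	E = t\frac{\partial}{\partial t} - \sum_{i\geq 1} i p_i \frac{\partial}{\partial p_i}.
\end{equation*}
On a monomial $t^n p_\lambda$ it acts by multiplication by $n - |\lambda|$. Therefore $E\tau = 0$ holds coefficientwise in $\mathbb{Q}[u,v,\bfp][[t]]$. This avoids any convergence issues, since for fixed $n$ only finitely many $\lambda$ occur, and the sum over $i$ in $E$ is locally finite on each coefficient.

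Next I pass to $F=\ln\tau$. Since $E$ is a derivation and $\tau$ has constant term $1$ in $t$ (the empty map at $n=0$), $\ln\tau$ is a well-defined element of $\mathbb{Q}[u,v,\bfp][[t]]$. The derivation property gives $E\ln\tau = E\tau/\tau = 0$. Equivalently, $F$ is a formal series in the homogeneous pieces of $\tau-1$, each of which is annihilated by $E$. Rewriting $EF=0$ is exactly \eqref{Homogeneity}.

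There is no real obstacle here. The only point requiring care is justifying that $\ln$ commutes with the grading, which I would handle either by the derivation argument above or by noting that the connected maps counted by $F$ also satisfy $|\lambda|=n$ directly.
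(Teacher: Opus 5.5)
Your proposal is correct and follows the paper's argument. In both, the key fact is that white corners are in bijection with edges, so every monomial $t^n p_{\lambda_1}\dotsm p_{\lambda_\ell}$ has $|\lambda|=n$ and is annihilated by $t\partial_t-\sum_i ip_i\partial_{p_i}$. The only cosmetic difference is that the paper applies this counting directly to the connected maps counted by $F$, whereas you prove it for $\tau$ and transfer it to $F=\ln\tau$ via the derivation property, which is equally valid.
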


\begin{proof}
	It comes from a double counting of the number of edges. On the LHS it is a direct counting using $t\frac{\partial}{\partial t}$ (recall that $t$ tracks the number of edges in $\tau(t,u,v,\bfp)$, hence in $F(t,u,v,\bfp)$ too). On the RHS, $\sum_{i\geq 1} ip_i \frac{\partial}{\partial p_i}$ counts the number of corners of faces incident to white vertices, which are in bijection with edges. In other words, if $\lambda$ is a partition, then the coefficient of $p_{\lambda_1} p_{\lambda_2}\dotsm$ in $F(t,u,v,\bfp)$ is non-zero only if $t$ has exponent $|\lambda|$.
\end{proof}

The idea to build our ODEs is as follows. Our ODEs come from the equation \eqref{Homogeneity} which we differentiate with respect to $p_k$ for $k=2, \dotsc, d$ and specialize to $\pi_d$. It gives the following equations
\begin{equation*}
	t\frac{\partial}{\partial t} \Fd_k{} - k\Fd_k = \sum_{i=1}^{d} ip_i \Fd_{i,k}.
\end{equation*}
Then on the RHS we use the KP hierarchy to re-express $\Fd_{i,k}$ in terms of $\Fd_\lambda$ where $\lambda$ is a hook. More precisely, this produces hook partitions where the first part goes up to $2d-1$. We thus use the Virasoro constraints to eliminate $\Fd_{d,1^l}, \dotsc, \Fd_{2d-1,1^{l}}$ in favor of $\Fd_{1,1^l}, \dotsc, \Fd_{d-1, 1^l}$. While this re-introduces some non-hook partitions, the latter are eliminated with another round of the KP hierarchy and no more substitution is needed. The functions $\Fd_{1,1^l}, \dotsc, \Fd_{d-1, 1^l}$ remain the sole unknown functions of the analysis.

All these manipulations can actually be performed for other models of maps that satisfy both some Virasoro-type constraints and the KP hierarchy, like general (non-bipartite) maps. However, the last step we need to obtain ODEs is specific to bipartite maps. The Virasoro constraints indeed allow us to recast derivatives with respect to $p_1$ as derivatives with respect to the size parameter $t$, as desired.

To make these manipulations formal, we introduce the notion of $d$-admissible expressions, which are essentially polynomials in $\Fd_{1,1^l}, \dotsc, \Fd_{d-1, 1^l}$.

\subsection{$d$-admissible expressions}
\begin{defi}
	Let 
	\begin{equation*}
		\mathcal{F}_{\text{$d$-adm.}}=\pi_d\mathcal{F}_{\text{hook}}\cap\mathcal{F}_{<d} = \{F^{\pi_d}_\lambda| \text{$\lambda\in\YY$ is a hook and $\lambda_1 \leq d-1$}\}
	\end{equation*}
	A $d$-admissible expression is an element of $R_d\coloneqq\QQ[t, t^{-1}, u,v,\pi_d(\bfp),p_d^{-1},\mathcal{F}_{\text{$d$-adm.}}]$.
\end{defi}

We define a set of $d$-admissible expressions 
\begin{equation} \label{Glarge}
	\mathcal{G}^{\pi_d} \coloneqq \{G_{\lambda}(t,u,v,\pi_d(\bfp),\mathcal{F}_{\text{$d$-adm.}}) \in R_d| \text{$\lambda$ is a hook and  $1\leq \lambda_1\leq 2d-1$}\}.
\end{equation}
For $l\geq0$, we set first for $m=1, \dotsc, d-1$, $G_{m,1^l}(\mathcal{F}_{\text{$d$-adm.}}) \coloneqq \Fd_{m,1^l}$, and then for $k=0,\dotsc, d-1$ we define inductively
\begin{multline} \label{G}
	G_{d+k,1^l}(\mathcal{F}_{\text{$d$-adm.}}) \coloneqq	\frac{1}{(d+k)p_d}\Bigl[- \sum_{\substack{i,j\geq 1\\ i+j=k}} ij\Bigl(\sum_{\substack{r, s\geq0\\ r+s=l}} \binom{l}{r}\Fd_{i,1^r} \Fd_{j,1^s} + \pi_d\operatorname{KP}_{i,j,1^l}(\mathcal{F}_{\text{hook}})\Bigr)\\
	+ (k+1)t^{-1}G_{k+1,1^l}(\mathcal{F}_{\text{$d$-adm.}}) - (u+v)k\Fd_{k,1^l} - l(k+1) G_{k+1,1^{l-1}}(\mathcal{F}_{\text{$d$-adm.}}) - uv\delta_{k,0}\delta_{l,0} \\- \sum_{i=1}^{d-k-1} (i+k)p_i \Fd_{i+k,1^l}
	- \sum_{m=0}^{k-1} (d+m)p_{d+m-k} G_{d+m,1^l}(\mathcal{F}_{\text{$d$-adm.}})\Bigr],
\end{multline}
with the shorthand notation $G_\lambda(\mathcal{F}_{\text{$d$-adm.}}) \equiv G_\lambda(t,u,v,\pi_d(\bfp), \mathcal{F}_{\text{$d$-adm.}})$.
	
\begin{prop} \label{prop:G} The Virasoro constraints \eqref{Virasoro} imply that for $m=1, \dotsc, 2d-1$ and $l\geq 0$
	\begin{equation} \label{G=F}
		\Fd_{m,1^l} = G_{m,1^l}(\mathcal{F}_{\text{$d$-adm.}}).
	\end{equation}
\end{prop}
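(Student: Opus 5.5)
We argue by a double induction: first on $k\in\{0,\dotsc,d-1\}$, where $m=d+k$, and for fixed $k$ on $l\geq0$. This mirrors the recursive definition \eqref{G}. For $m\leq d-1$, \eqref{G=F} holds by definition, since $G_{m,1^l}=\Fd_{m,1^l}$. For $m=d+k$ the starting point is the Virasoro constraint \eqref{AllVirasoro} with index $k$ and $\lambda=(1^l)$. It is obtained from \eqref{Virasoro} by differentiating $l$ times with respect to $p_1$ before applying $\pi_d$; this is legitimate because $\pi_d$ commutes with $\partial/\partial p_1$.

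The key step is to check that this differentiated constraint, divided by $t$, becomes exactly $(d+k)p_d\Fd_{d+k,1^l}=[\dotsb]$, with the bracket of \eqref{G} written in terms of $\Fd$'s instead of $G$'s. We match terms one by one.
\begin{itemize}
\item The left side $(k+1)\Fd_{1^l,k+1}$, divided by $t$, gives the term $(k+1)t^{-1}\Fd_{k+1,1^l}$.
\item The quadratic term gives $\sum_{r+s=l}\binom{l}{r}\Fd_{i,1^r}\Fd_{j,1^s}$ by Leibniz.
\item The term $\Fd_{i,j,1^l}$ is not a hook when $i,j\geq2$, so we replace it by $\pi_d\operatorname{KP}_{i,j,1^l}$ using Proposition \ref{prop:Fij}. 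When $i$ or $j$ equals $1$ it is already a hook and $\operatorname{KP}$ acts as the identity.
\item The term $\sum_s(\lambda_s+k)\Fd_{\lambda-\lambda_s,\lambda_s+k}$ with all $\lambda_s=1$ gives $l(k+1)\Fd_{k+1,1^{l-1}}$.
\item The $p$-term $\sum_{i=1}^d(i+k)p_i\Fd_{i+k,1^l}$ is split three ways: indices $i+k\leq d-1$; indices $d\leq i+k<d+k$, written as $m=i+k-d\in\{0,\dotsc,k-1\}$ with coefficient $p_{d+m-k}$; and the top term $i=d$, which is isolated on the left.
\item The $\delta$-term survives only for $k=0,\ l=0$.
\end{itemize}
Dividing by $(d+k)p_d$ and using $p_d^{-1}, t^{-1}$ in $R_d$ expresses $\Fd_{d+k,1^l}$ through a sum of $\Fd$'s with the structure of \eqref{G}.

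It remains to replace each $\Fd$ on the right by the corresponding $G$.
\begin{itemize}
\item $\Fd_{k+1,1^l}$ has first part at most $d$. If $k+1\leq d-1$ it equals $G$ by definition. If $k=d-1$, then $k+1=d$ is the case $k'=0$, handled earlier in the induction on $k$.
\item $\Fd_{k+1,1^{l-1}}$ is covered by the same reasoning.
\item The terms $\Fd_{d+m,1^l}$ with $m<k$ are covered by the induction hypothesis on $k$ at the same $l$.
\item The hooks occurring in $\operatorname{KP}_{i,j,1^l}$ have first part at most $|\lambda|-\ell(\lambda)+1=i+j-1=k-1\leq d-2$, by the bound in Proposition \ref{prop:Fij}. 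They are therefore already $d$-admissible.
\end{itemize}
So the induction on $k$ alone, uniform in $l$, closes the argument; the induction on $l$ is needed only if we prefer to order things that way.

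The main obstacle is the bookkeeping: checking that after the substitution every remaining $\Fd$ really lies in $\mathcal{F}_{\text{$d$-adm.}}$ or was treated at an earlier stage. This requires the hook-length bound from Proposition \ref{prop:Fij}, namely first part at most $k-1$, and a careful reindexing of the $p_i$-sum to match the ranges in \eqref{G}. Both are mechanical once the differentiation of \eqref{Virasoro} in $p_1$ has been written out.
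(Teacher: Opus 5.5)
Your proposal is correct and takes essentially the same route as the paper. The paper also inducts on $k$, uniformly in $l$, using the Virasoro constraint \eqref{AllVirasoro} at index $k$ with $\lambda=(1^l)$. It isolates the term $t(d+k)p_d\Fd_{d+k,1^l}$ and splits the $p_i$-sum at $i+k=d$, so that the induction hypothesis covers $\Fd_{d,1^l},\dotsc,\Fd_{d+k-1,1^l}$. It then applies Proposition \ref{prop:Fij} to the non-hook terms $\Fd_{i,j,1^l}$, whose hooks have first part at most $i+j-1=k-1\leq d-2$. Your explicit check that, for $k=d-1$, the terms $G_{d,1^l}$ and $G_{d,1^{l-1}}$ are covered by the case $k'=0$ is a detail the paper leaves implicit.
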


Obviously the equality \eqref{G=F} is non-trivial only for $m=d, \dotsc, 2d-1$. In the second line of \eqref{G}, notice that for all $k=1, \dotsc, d-2$ we simply have $G_{k+1,1^l}(\mathcal{F}_{\text{$d$-adm.}}) = \Fd_{k+1,1^l}$ and $G_{k+1, 1^{l-1}}(\mathcal{F}_{\text{$d$-adm.}}) = \Fd_{k+1,1^{l-1}}$. It is only for $k=d-1$ that we can not use $\Fd_{d,1^l}$ because it is not $d$-admissible.



\begin{proof} We prove \eqref{G=F} by induction on $k\geq 0$. First consider the Virasoro constraint \eqref{AllVirasoro} for $k=0$ and $\lambda=(1^l)$. It reads
	\begin{equation*}
		\Fd_{1^{l+1}} = t\sum_{i=1}^{d} ip_i \Fd_{i,1^l} + lt\Fd_{1^l} + tuv \delta_{l,0}. 
	\end{equation*}
	We can extract $\Fd_{d,1^l} = (\Fd_{1^{l+1}} - t\sum_{i=1}^{d-1} ip_i \Fd_{i,1^l} - lt\Fd_{1^l} - tuv \delta_{l,0})/(tdp_d)$, which is the definition of $ G_{d,1^l}$ in \eqref{G}. 
	
	Let $k\in\{1, \dotsc, d-1\}$ and assume that \eqref{G=F} holds as $G_{d+k',l} = tp_d^{k'+1} \Fd_{d+k',1^l}$ for all $l\geq 0 $ and $k'<k$. We now consider the Virasoro constraint \eqref{AllVirasoro} for $k$ and $\lambda=(1^l)$,
	\begin{multline}
		(k+1)\Fd_{k+1,1^l} = t\sum_{\substack{i,j\geq 1\\ i+j=k}} ij\Bigl(\sum_{\substack{r, s\geq0\\ r+s=l}} \binom{l}{r} \Fd_{i,1^r} \Fd_{j,1^s} + \Fd_{i,j,1^l}\Bigr) + t\sum_{i=1}^d (i+k)p_i \Fd_{k+i,1^l} \\
		+ tl(k+1) \Fd_{k+1,1^{l-1}} + t(u+v) k\Fd_{k,1^l}.
	\end{multline}
	We will extract $t(d+k) p_d \Fd_{d+k, 1^l}$ from this equation while expressing all other quantities as $d$-admissible ones. One splits the sum $t\sum_{i=1}^d (i+k)p_i \Fd_{k+i,1^l}$ at $k+i=d$,
	\begin{equation*}
		t\sum_{i=1}^d (i+k)p_i \Fd_{k+i,1^l} = t\sum_{i=1}^{d-1-k} (i+k)p_i \Fd_{k+i,1^l} + t\sum_{i=n-k}^{d-1} (i+k)p_i \Fd_{k+i,1^l}.
	\end{equation*}
	 For $i< d-k$, $t\sum_{i=1}^{d-1-k} (i+k)p_i \Fd_{k+i,1^l}$ is $d$-admissible by definition. For $i\geq d-k$, the sum involves $\Fd_{d,1^l}, \dotsc, \Fd_{d+k-1, 1^l}$ which are not $d$-admissible, but for which the induction hypothesis can be applied. 
	 
	 Then, the only remaining quantities that are not $d$-admissible are $\Fd_{i,j,1^l}$ when $(i,j,1^l)$ is not a hook, i.e. when both $i, j\geq2$. For this, we use the KP flows of Proposition \ref{prop:Fij},
	 \begin{equation*}
	 	\Fd_{i,j,1^l} = \operatorname{KP}_{i,j,1^l}(\pi_d\mathcal{F}_{\text{hook}}).
	 \end{equation*}
	 The term with the largest first part in $\operatorname{KP}_{i,j,1^l}(\pi_d\mathcal{F}_{\text{hook}})$ is $\Fd_{i+j-1,1^{l+1}}$ which is $d$-admissible. All other monomials are products over hooks whose first part is smaller, hence they are $d$-admissible too.
\end{proof}

It will be crucial for us to control the explicit powers of $t$ that appear in $G_{d+m,1^l}(\mathcal{F}_{\text{$d$-adm.}})$ and in particular in front of the terms that are linear in $\mathcal{F}_{\text{$d$-adm.}}$. This behavior is infered from the definition \eqref{Glarge}.

\begin{prop}  \label{prop:ExplicitG} Let $l\geq 0$. There exists a finite number of non-zero polynomials $\alpha^{(m,l)}_i$, $\beta^{(m,l)}_i$, $\gamma^{(m,l)}_i$, $\rho^{(m,l)}_{i,r}$, $\theta^{(m,l)}_{\substack{j_1, \dotsc, j_a\\ q_1, \dotsc, q_a}} \in \QQ[u,v,p_d^{-1}, \pi_d(\mathbf{p})]$ such that for $m=0,\dotsc, d-2$
	\begin{multline} \label{Gorder}
		G_{d+m,1^l}(\mathcal{F}_{\text{$d$-adm.}}) = \sum_{i=1}^{m+1} \Bigl(\alpha^{(m,l)}_{i} t^{-1}\Fd_{i,1^l} + \beta^{(m,l)}_{i} \Fd_{i,1^{l-1}}\Bigr) +
		\sum_{i=1}^{d-1} \gamma^{(m,l)}_i \Fd_{i,1^l} \\
		+ \mathbf{1}_{m\geq 2}\sum_{i=1}^m \sum_{r=1}^{i-1} \rho^{(m,l)}_{i,r} \Fd_{i-r,1^{l+r}}
		+ \mathbf{1}_{m\geq 2} \sum_{a\geq 2} \sum_{\substack{j_1, \dotsc, j_a \geq 1\\ \sum_{b=1}^a j_b\leq m}} \sum_{\substack{q_1, \dotsc, q_a\geq 0\\ \sum_{b=1}^a q_b\geq l\\ \sum_{b=1}^a (j_b+q_b) \leq m+l}} \theta^{(m,l)}_{\substack{j_1, \dotsc, j_a\\ q_1, \dotsc, q_a}} \prod_{b=1}^a \Fd_{j_b,1^{q_b}}\\
		+ Q_{m,1^l}(t, u, v, p_d^{-1}, \pi_d(\mathbf{p})) 
	\end{multline}
	and for $m=d-1$
	\begin{multline} \label{G2d-1}
		G_{2d-1,1^l}(\mathcal{F}_{\text{$d$-adm.}}) = \sum_{i=1}^{d-1} \Bigl(\alpha^{(d-1,l)}_{i} t^{-1}\Fd_{i,1^l} + \beta^{(d-1,l)}_i \Fd_{i,1^{l-1}} + \gamma^{(d-1,l)}_i \Fd_{i,1^l}\Bigr) \\
		+ \sum_{i=1}^{d-1} \sum_{r=1}^{i-1} \rho^{(d-1,l)}_{i,r} \Fd_{i-r,1^{l+r}}
		+ \sum_{a\geq 2} \sum_{\substack{j_1, \dotsc, j_a \geq 1\\ \sum_{b=1}^a j_b\leq d-1}} \sum_{\substack{q_1, \dotsc, q_a\geq 0\\ \sum_{b=1}^a q_b\geq l\\ \sum_{b=1}^a (j_b+q_b) \leq d-1+l}} \theta^{(d-1,l)}_{\substack{j_1, \dotsc, j_a\\ q_1, \dotsc, q_a}} \prod_{b=1}^a \Fd_{j_b,1^{q_b}}\\
		+ \frac{\Fd_{1^{l+1}}}{(2d-1)p_d^2t^2} - \frac{2l\Fd_{1^l}}{(2d-1)p_d^2t} + Q_{d-1,1^l}(t, u, v, p_d^{-1}, \pi_d(\mathbf{p})),
	\end{multline}
	where $Q_{m,1^l}(t, u, v, p_d^{-1}, \pi_d(\mathbf{p}))\in \QQ[t, u, v, p_d^{-1}, \pi_d(\mathbf{p})]$ and $\beta_i^{(m,0)}=0$ for $m=0, \dotsc, d-1$.		
\end{prop}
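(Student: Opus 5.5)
The plan is an induction on $m=0,\dotsc,d-1$, uniformly in $l\geq 0$, following the recursive definition \eqref{G}. For each $m$ I will expand the right-hand side of \eqref{G} with $k=m$, replace every $G_{d+m',1^{l'}}$ with $m'<m$ by the form \eqref{Gorder} given by the induction hypothesis, and sort the resulting terms into the five families of \eqref{Gorder}: linear terms with $t^{-1}$, linear terms with index $1^{l-1}$, linear terms with index $1^l$, the shifted terms $\Fd_{i-r,1^{l+r}}$, and products. Throughout, the coefficients stay in $\QQ[u,v,p_d^{-1},\pi_d(\bfp)]$, because the only division in \eqref{G} is by $(d+m)p_d$.

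The base case $m=0$ is the explicit formula from the proof of Proposition \ref{prop:G}:
\[
G_{d,1^l}=\frac{\Fd_{1^{l+1}}-t\sum_{i<d}ip_i\Fd_{i,1^l}-lt\Fd_{1^l}-tuv\delta_{l,0}}{tdp_d}.
\]
This gives $\alpha^{(0,l)}_1=1/(dp_d)$, $\gamma^{(0,l)}_i=-ip_i/(dp_d)$, $\beta^{(0,l)}_1=-l/(dp_d)$ and $Q=-uv\delta_{l,0}/(dp_d)$. In particular $\beta^{(0,0)}=0$, as required.

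For $1\leq m\leq d-2$ I will go through the terms of \eqref{G} one at a time.
\begin{itemize}
\item $(m+1)t^{-1}\Fd_{m+1,1^l}$ contributes to $\alpha_{m+1}$.
\item $-l(m+1)\Fd_{m+1,1^{l-1}}$ contributes to $\beta_{m+1}$, and vanishes when $l=0$.
\item $(u+v)m\Fd_{m,1^l}$ and $\sum_i(i+m)p_i\Fd_{i+m,1^l}$ feed $\gamma$.
\item The products $\Fd_{i,1^r}\Fd_{j,1^s}$ with $i+j=m$ and $r+s=l$ satisfy the constraints on the $\theta$ terms with $a=2$.
\item For $\pi_d\operatorname{KP}_{i,j,1^l}$, Proposition \ref{prop:Fij} gives homogeneity of degree $m+l$. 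Its linear part consists of $\Fd_{m-1,1^{l+1}}$, which is the $r=1$ term of $\rho$ with index $i=m$, together with lower hooks $\Fd_{m-r,1^{l+r}}$ of the same degree, which feed $\rho$. Its nonlinear monomials are products of hooks with $\sum(j_b+q_b)=m+l$ and $\sum j_b\leq m$. To get $\sum q_b\geq l$, I will use that each monomial comes from $l$ derivatives $\partial_{p_1}$ applied to $\operatorname{KP}_{i,j}$, as in the last step of the proof of Proposition \ref{prop:Fij}. Those derivatives only increase the $q$'s, so the total number of $1$'s is at least $l$.
\item The previous $G_{d+m',1^l}$ with $m'<m$, multiplied by $p_{d+m'-m}$, reproduce the shapes of \eqref{Gorder} by the induction hypothesis, with $m'<m$ only weakening the bounds.
\end{itemize}
No positive power of $t$ appears in front of an $\Fd$ term, and negative powers appear only as $t^{-1}$ in front of linear terms with index $1^l$. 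The one place where this could fail is the term $(k+1)t^{-1}G_{k+1,1^l}$. For $m\leq d-2$ it is just $t^{-1}\Fd_{m+1,1^l}$, so it causes no trouble.

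The main obstacle is the case $m=d-1$. Here $(k+1)t^{-1}G_{k+1,1^l}=dt^{-1}G_{d,1^l}$, and substituting the base case produces $t^{-2}\Fd_{1^{l+1}}/p_d$. The same substitution also produces $t^{-1}$ times the $\gamma$-part of $G_{d,1^l}$, which goes into $\alpha$. From $-l\,d\,G_{d,1^{l-1}}$ it produces $-l\,t^{-1}\Fd_{1^l}/p_d$, which cannot be absorbed into the $\alpha$ family because of its index $1^l$ combined with the $t^{-1}$. I will track these two exceptional terms exactly. Dividing by $(2d-1)p_d$, the first contributes $\Fd_{1^{l+1}}/((2d-1)p_d^2t^2)$. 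The second has two sources: $-l\Fd_{1^l}/(dp_dt)$ from inside $dt^{-1}G_{d,1^l}$, whose $-lt\Fd_{1^l}$ term is divided by $tdp_d$, and a matching contribution from $-ldG_{d,1^{l-1}}$ via its $\Fd_{1^l}/(tdp_d)$ term. Together they give the displayed $-2l\Fd_{1^l}/((2d-1)p_d^2t)$. Every remaining term is handled exactly as in the case $m\leq d-2$. Finally, $\beta^{(m,0)}=0$ holds because every $\Fd_{\cdot,1^{l-1}}$ term carries a factor $l$.
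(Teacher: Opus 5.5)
You follow the paper's route: induction on $m$ through \eqref{G}, with an extra substitution of $G_{d,1^l}$ and $G_{d,1^{l-1}}$ at $m=d-1$. Your bookkeeping for $m\le d-2$ and for the two displayed exceptional terms is correct. The step that fails is ``every remaining term is handled exactly as in the case $m\le d-2$'' at $m=d-1$.

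\textbf{The missing term.} In $-l\,d\,G_{d,1^{l-1}}$ you kept only the $t^{-1}\Fd_{1^l}$ piece. But $G_{d,1^{l-1}}$ also contains $-\frac{l-1}{dp_d}\Fd_{1^{l-1}}$, so $G_{2d-1,1^l}$ contains $\frac{l(l-1)}{(2d-1)p_d^2}\Fd_{1^{l-1}}$. For $l\geq 2$ this is a linear term in a hook of total size $l-1$. Every linear term displayed in \eqref{G2d-1}, including the $\beta$-terms $\Fd_{i,1^{l-1}}$, involves a hook of size at least $l$. No other contribution to $G_{2d-1,1^l}$ produces a hook of size $l-1$, so nothing absorbs or cancels this term.

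You can check it directly for $d=2$. The Virasoro constraints give $\Fd_{3,1^l}=\frac{1}{3tp_2}\bigl[2(1-tp_1)\Fd_{2,1^l}-2tl\,\Fd_{2,1^{l-1}}-t(u+v)\Fd_{1^{l+1}}\bigr]$. Since $\Fd_{2,1^{l-1}}$ contains $-\frac{l-1}{2p_2}\Fd_{1^{l-1}}$, this yields $\frac{l(l-1)}{3p_2^2}\Fd_{1^{l-1}}$.

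\textbf{The constant term.} In the same way, $d\,t^{-1}G_{d,1^0}$ puts the constant $-\frac{uv}{(2d-1)p_d^2t}$ into $Q_{d-1,\emptyset}$. This is not in $\QQ[t,u,v,p_d^{-1},\pi_d(\bfp)]$.

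\textbf{Consequence and fix.} So \eqref{G2d-1} is false as printed for $l\geq 2$, and no proof can establish it verbatim. The paper's sketch, which only mentions the substitution of $G_{d,1^l}$, misses the same term. What your induction actually proves is the corrected statement: add $\frac{l(l-1)}{(2d-1)p_d^2}\Fd_{1^{l-1}}$ to the right-hand side of \eqref{G2d-1}, and allow $t^{-1}$ in $Q_{d-1,1^l}$.

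The correction is harmless downstream. As noted in the proof of Theorem \ref{thm:Recurrence}, $G_{2d-1,1^l}$ only enters $E_1$ with $l=1$. There $l(l-1)=0$ and the constant term is $\frac{uv}{(2d-1)p_d^2}$, with no $t^{-1}$.
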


\begin{proof}
	It is a lengthy but straightforward induction on $m$ which is left to the reader. Notice that in \eqref{G} the only terms with an explicit power of $t$ are $\frac{(m+1)}{(d+m)p_dt}\Fd_{m+1,1^l}$ for $m=0, \dotsc, d-2$, which are $d$-admissible, and $\frac{d}{(2d-1)p_dt}G_{d,1^l}$ for $m=d-1$, which is not $d$-admissible. Therefore when one builds the $d$-admissible expression for $G_{d+m,1^l}$ in a recursive manner, an additional substitution is required for $G_{d,1^l}$, using 
	\begin{equation*}
		G_{d,1^l}(\mathcal{F}_{\text{$d$-adm.}}) = \frac{1}{dp_d}\bigl(t^{-1} \Fd_{1^{l+1}} - l\Fd_{1^l} - \sum_{i=1}^{d-1} ip_i \Fd_{i,1^l} - uv \delta_{l,0}\bigr).
	\end{equation*}
	This explains the two terms $\frac{\Fd_{1^{l+1}}}{(2d-1)p_d^2t^2} - \frac{2l\Fd_{1^l}}{(2d-1)p_d^2t}$ in the RHS of \eqref{G2d-1}.
\end{proof}


We need a last set of $d$-admissible expressions.
\begin{prop} \label{prop:Fnk}
	We have the following $d$-admissible expressions. For $j=2, \dotsc, d-1$
	\begin{equation} \label{Fnk}
		\sum_{i=1}^{d} ip_iF^{\pi_d}_{i,j} = \frac{\Fd_{j,1}}{t} - jF^{\pi_d}_j,
	\end{equation}
	and
	\begin{equation} \label{Fnn}
		\sum_{i=1}^{d} ip_iF^{\pi_d}_{i,n} = \frac{\Fd_{1,1}-t(d+1)\Fd_1}{t^2dp_d} + \frac{uv}{p_d} - \sum_{i=1}^{d-1} \frac{ip_i}{dp_d} \Bigl(\frac{\Fd_{i,1}}{t} - d\Fd_i\Bigr)
	\end{equation}
\end{prop}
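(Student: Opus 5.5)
The plan is to apply the Virasoro constraint \eqref{AllVirasoro} at $k=0$ with the one-part partition $\lambda=(j)$, and then, when $j=d$, to eliminate the two non-admissible quantities using Proposition \ref{prop:G} in the form of the explicit expression for $G_{d,1^l}$ recalled in the proof of Proposition \ref{prop:ExplicitG}. I read the index $n$ in \eqref{Fnn} as $d$, since the right-hand side of \eqref{Fnn} is exactly what this computation produces for $j=d$.

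\emph{Step 1 (the Virasoro identity).} Take $k=0$ and $\lambda=(j)$ in \eqref{AllVirasoro}, and check each term.
\begin{itemize}
\item The left-hand side is $\Fd_{j,1}$.
\item The quadratic sum over $i+i'=0$ with $i,i'\geq 1$ is empty.
\item The term $t(u+v)k\Fd_{\lambda,k}$ vanishes since $k=0$.
\item The term $tuv\,\delta_{k,0}\delta_{\lambda,\emptyset}$ vanishes since $\lambda\neq\emptyset$.
\item The linear term becomes $t\sum_{i=1}^d ip_i\Fd_{j,i}$.
\item The sum over parts of $\lambda$ has the single contribution $s=1$, namely $t(j+0)\Fd_{\emptyset,j}=tj\Fd_j$.
\end{itemize}
Therefore
\begin{equation*}
	\Fd_{j,1} = t\sum_{i=1}^d ip_i\Fd_{i,j} + tj\Fd_j ,
\end{equation*}
which is valid for every $j\geq1$.

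\emph{Step 2 (the case $2\le j\le d-1$).} Dividing by $t$ gives \eqref{Fnk} directly. The expression is $d$-admissible because $(j,1)$ and $(j)$ are hooks with first part at most $d-1$.

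\emph{Step 3 (the case $j=d$).} Step 1 gives $\sum_i ip_i\Fd_{i,d}=t^{-1}\Fd_{d,1}-d\Fd_d$. Neither $\Fd_{d,1}$ nor $\Fd_d$ is admissible, so I substitute Proposition \ref{prop:G} with $m=d$ and $l=1,0$:
\begin{equation*}
	\Fd_{d,1}=\frac{1}{dp_d}\Bigl(t^{-1}\Fd_{1,1}-\Fd_1-\sum_{i=1}^{d-1}ip_i\Fd_{i,1}\Bigr),\qquad
	\Fd_{d}=\frac{1}{dp_d}\Bigl(t^{-1}\Fd_{1}-\sum_{i=1}^{d-1}ip_i\Fd_{i}-uv\Bigr).
\end{equation*}
Inserting these and collecting terms gives the following pieces.
\begin{itemize}
\item The terms in $\Fd_{1,1}$ and $\Fd_1$ contribute $\frac{\Fd_{1,1}-t\Fd_1-dt\Fd_1}{t^2dp_d}$. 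This produces the factor $(d+1)$ in \eqref{Fnn}.
\item The constant contributes $+uv/p_d$.
\item The remaining sums combine into $-\sum_{i=1}^{d-1}\frac{ip_i}{dp_d}\bigl(t^{-1}\Fd_{i,1}-d\Fd_i\bigr)$.
\end{itemize}
This is exactly \eqref{Fnn}, and it is visibly $d$-admissible.

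There is no genuine obstacle. The only point requiring care is the bookkeeping in Step 1, namely confirming that at $k=0$ the quadratic, $(u+v)$ and $\delta$ terms all drop out. In Step 3 the only point is tracking the coefficient $-(d+1)t$ of $\Fd_1$, which receives one contribution from $\Fd_{d,1}$ and one from $\Fd_d$.
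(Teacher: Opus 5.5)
Your proposal is correct and follows the paper's own proof: apply the Virasoro constraint at $k=0$ with $\lambda=(j)$, and for $j=d$ substitute the $d$-admissible expressions of $G_{d,1}$ and $G_{d}$ from Proposition~\ref{prop:G}. Your reading of the index $n$ in \eqref{Fnn} as $d$ is also right, and your bookkeeping, including the coefficient $-(d+1)t$ of $\Fd_1$, checks out.
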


\begin{proof}
	Consider the Virasoro constraint \eqref{AllVirasoro} for $k=0$ and $\lambda=(j)$. This proves Equation \eqref{Fnk}, and also gives this equation for $j=d$,
	\begin{equation*}
		t\sum_{i=1}^{d} ip_iF^{\pi_d}_{d,i} = \Fd_{d,1} - tdF^{\pi_d}_d.
	\end{equation*}
	In order to obtain \eqref{Fnn}, we use Proposition \ref{prop:G} which gives $\Fd_{d,1^l} = G_{d,1^l}(\mathcal{F}_{\text{$d$-adm.}})$, and use the $d$-admissible expression of $G_{d,1^l}(\mathcal{F}_{\text{$d$-adm.}})$ for $l=0,1$.
\end{proof}

\section{A differentially algebraic system} \label{sec:ODEs}
\subsection{The coupled ODEs} 
Now the key point is that if $i,j\geq 2$ and $i+j\leq 2d$, then $\operatorname{KP}_{i,j,1^l}(\mathcal{H}_{\text{hook}})$ is a polynomial in the $H_\lambda$s where $\lambda$ is a hook whose first part $\lambda_1$ is bounded as $\lambda_1\leq 2d-1$. As a consequence, after applying $\Phi_{\Fd}$, they all possess $d$-admissible expressions in terms of $\mathcal{G}^{\pi_d}$ and it makes sense to substitute them into $\operatorname{KP}_{i,j,1^l}(\mathcal{H}_{\text{hook}})$,
\begin{equation} \label{piKP}
	\Phi_{\Fd}\operatorname{KP}_{i,j,1^l}(\mathcal{H}_{\text{hook}}) = \operatorname{KP}_{i,j,1^l}(\mathcal{G}^{\pi_d}).
\end{equation}
Defining for $k=2, \dots, d-1$
\begin{align}
	&\mathcal{E}_k(\mathcal{G}^{\pi_d}) \equiv \mathcal{E}_k(t,u,v,\pi_d(\mathbf{p}), \mathcal{G}^{\pi_d}) \coloneqq -\sum_{i=1}^{d} ip_i \operatorname{KP}_{i,k}(\mathcal{G}^{\pi_d}) + \frac{\Fd_{k,1}}{t} - kF^{\pi_d}_k\\
	&\begin{multlined}
		\mathcal{E}_1(\mathcal{G}^{\pi_d}) \equiv \mathcal{E}_1(t,u,v,\pi_d(\mathbf{p}), \mathcal{G}^{\pi_d}) \coloneqq -\sum_{i=1}^{d} ip_i\operatorname{KP}_{i,d}(\mathcal{G}^{\pi_d}) + \frac{\Fd_{1,1}-t(d+1)\Fd_1}{t^2dp_d} \\+ \frac{uv}{p_d} - \sum_{i=1}^{d-1} \frac{ip_i}{dp_d} \Bigl(\frac{\Fd_{i,1}}{t} - d\Fd_i\Bigr)
	\end{multlined}
\end{align}
which are elements of $\mathbb{Q}[t,t^{-1},u,v,\pi_d(\mathbf{p}), p_d^{-1}, \mathcal{G}^{\pi_d}]$, we find via Proposition \ref{prop:Fnk} the relations 
\begin{equation*}
	\mathcal{E}_k(\mathcal{G}^{\pi_d}) = 0,
\end{equation*}
for $k=1, \dotsc, d-1$.

The final step of our substitution process is to replace derivatives with respect to $p_1$ with derivatives with respect to $t$ and thus get ODEs. This is done by combining \eqref{Homogeneity}, $t\dt F = \sum_{i\geq 1} ip_i F_i$, with the Virasoro constraint \eqref{Virasoro} for $k=0$, $F_1 = t\sum_{i\geq 1} ip_i F_i + tuv$. This gives
\begin{equation*}
	t^2 \dt F = F_1 - tuv,
\end{equation*}
and with further derivatives
\begin{equation} \label{p1_t}
	\Fd_{k,1^l} = \bigl(t^2 \dt\bigr)^{l} \Fd_k.
\end{equation}
We will now display the $t$-dependence explicitly to emphasize that we are arriving at a system of ODEs with respect to the variable $t$ only. In particular we write $\Fd_k{}'(t) \equiv \dt \Fd_k$, $\Fd_k{}^{(l)} = \dt^l \Fd_k$ and so on. Let $T$ denote the map that replaces all occurences of $p_1$-derivatives in $d$-admissible expressions according to \eqref{p1_t}.

\begin{thm} \label{thm:ODEs} For $k=2, \dotsc, d-1$, let
	\begin{equation} \label{ODEk}
		E_k(t) \coloneqq T\mathcal{E}_k(\mathcal{G}^{\pi_d}) = -\sum_{i=1}^{d} ip_i T\Bigl(\operatorname{KP}_{i,k}(\mathcal{G}^{\pi_d})\Bigr) + t F^{\pi_d}_k{}'(t) - kF^{\pi_d}_k(t),
	\end{equation}
	and
	\begin{multline} \label{ODEF}
		E_1(t) \coloneqq dp_d T\mathcal{E}_1(\mathcal{G}^{\pi_d}) = -dp_d \sum_{i=1}^{d} ip_i T\Bigl(\operatorname{KP}_{i,d}(\mathcal{G}^{\pi_d})\Bigr) + \Fd_1{}'(t) - (d+1)t^{-1}\Fd_1(t) \\+ \sum_{i=1}^{d-1} ip_i (dF^{\pi_d}_i(t)-tF^{\pi_d}_i{}'(t)) -duv.
	\end{multline}
	Then the equations $E_k(t) = 0$ form a system of $d-1$ coupled ODEs satisfied by $\Fd_1(t)$, $\Fd_2(t)$, \ldots, $\Fd_{d-1}(t)$.
\end{thm}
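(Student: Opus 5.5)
The proof combines three ingredients already established. The relations $\mathcal{E}_k(\mathcal{G}^{\pi_d})=0$ for $k=1,\dotsc,d-1$ come from Proposition \ref{prop:Fnk} and the KP reduction \eqref{piKP}. The map $T$ then converts every $p_1$-derivative into a $t$-derivative via \eqref{p1_t}. Finally, one checks that what remains depends on $t$ only through $\Fd_1,\dotsc,\Fd_{d-1}$ and their $t$-derivatives, with coefficients in $\QQ[t,t^{-1},u,v,\pi_d(\bfp),p_d^{-1}]$.

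\textbf{Step 1: the relations $\mathcal{E}_k=0$.} Start from \eqref{Homogeneity}, differentiate with respect to $p_k$, and specialize by $\pi_d$. This gives $t\dt\Fd_k-k\Fd_k=\sum_{i=1}^d ip_i\Fd_{i,k}$. Compare it with the Virasoro identity \eqref{Fnk} for $k\le d-1$, or with \eqref{Fnn} for the index $d$.

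For each term $\Fd_{i,k}$ with $i,k\ge2$, write $\Fd_{i,k}=\Phi_{\Fd}\operatorname{KP}_{i,k}(\mathcal{H}_{\text{hook}})$ using Proposition \ref{prop:Fij}. The hooks that appear have first part at most $i+k-1\le 2d-1$. Proposition \ref{prop:G} then replaces every hook with first part in $\{d,\dotsc,2d-1\}$ by its $d$-admissible expression $G$. When $i=1$ or $k=1$, $\Fd_{i,k}$ is already a hook; I treat $\operatorname{KP}$ of a hook as the identity.

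This yields $\mathcal{E}_k(\mathcal{G}^{\pi_d})=0$ as identities in $\mathbb{K}_d[[t]]$, localized at $t$ and $p_d$. For $k=1$, the equation is built from $j=d$ and eliminates $\sum ip_i\Fd_{i,d}$ via \eqref{Fnn}.

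\textbf{Step 2: passing to ODEs.} Every quantity left after Step 1 is $d$-admissible, i.e. of the form $\Fd_{m,1^l}$ with $m\le d-1$. The key identity is $t^2\dt F=F_1-tuv$. It follows from \eqref{Homogeneity} together with the $k=0$ Virasoro constraint \eqref{Virasoro}; to be precise, one uses the unspecialized version $F_1=t\sum_i ip_iF_i+tuv$, valid for all $\bfp$.

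Differentiate this identity with respect to $p_m$ and then iterate in $p_1$. Since $\partial_{p_j}$ commutes with $t^2\dt$ and kills the constant $tuv$, we get $F_{m,1^{l}}=(t^2\dt)^lF_m$ for all $l\ge0$ (for $l\ge1$, and trivially for $l=0$). Specializing with $\pi_d$, which commutes with $\dt$, gives \eqref{p1_t}.

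Applying $T$ to $\mathcal{E}_k(\mathcal{G}^{\pi_d})$ therefore produces a polynomial in the $\dt^j\Fd_m$, $m\le d-1$, with coefficients in $\QQ[t,t^{-1},u,v,\pi_d(\bfp),p_d^{-1}]$. Multiplying by $dp_d$ when $k=1$ gives $E_1$, and the equations $E_k(t)=0$ hold. The explicit linear parts displayed in \eqref{ODEk} and \eqref{ODEF} come from applying $T$ to $\Fd_{k,1}/t$ and $\Fd_{i,1}/t$, using $T\Fd_{i,1}=t^2\Fd_i{}'$.

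\textbf{Main obstacle.} The delicate point is making sure nothing non-admissible survives. Substituting $G_{d+m,1^l}$ could in principle reintroduce non-hook terms. This is handled by the form of \eqref{G} itself: every non-hook $\operatorname{KP}_{i,j,1^l}$ that appears there has $i+j=k\le d-1$, so it reduces to hooks with first part at most $d-2$.

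We also need $l$ to stay finite. Since $\operatorname{KP}_{i,k}$ is homogeneous of degree $i+k\le 2d$, only finitely many $l$ occur, so the substitution terminates after finitely many rounds. Finally, the resulting system is genuinely coupled in the $d-1$ unknowns $\Fd_1,\dotsc,\Fd_{d-1}$, one equation for each $k=1,\dotsc,d-1$.
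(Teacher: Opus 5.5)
Your proposal is correct and takes the same route as the paper. The relations $\mathcal{E}_k(\mathcal{G}^{\pi_d})=0$ come from Proposition \ref{prop:Fnk} once the KP hook expressions and the $d$-admissible $G$'s of Proposition \ref{prop:G} are substituted, and $T$ then converts the $p_1$-derivatives via \eqref{p1_t}; you just spell out what the paper's two-line proof leaves to ``by construction'', including the derivation of \eqref{p1_t} from the unspecialized $k=0$ constraint. If you carry out the $k=1$ computation explicitly, note that $dp_d\cdot uv/p_d$ gives a constant term $+duv$ in $E_1$, consistent with the $d=2$ example, so the $-duv$ displayed in \eqref{ODEF} appears to be a sign slip in the paper rather than something your argument should reproduce.
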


\begin{proof}
	The fact that these ODEs hold is a direct result of Propositions \ref{prop:Fij}, \ref{prop:Fnk}, \ref{prop:G} and Equations \eqref{piKP}, \eqref{p1_t}. By construction, they only involve $\Fd_1(t), \Fd_2(t), \dotsc, \Fd_{d-1}(t)$ and their $t$-derivatives, since the equations are built from $d$-admissible expressions on which the map $T$ is applied.
\end{proof}

\subsection{Applications} Here we give a few special cases of Theorem \ref{thm:ODEs} obtained with Sagemath.
\subsubsection{Quadrangulations with digons} For $d=2$ we find for $E_1(t)=0$,
\begin{multline}
	t  \Fd_1{}'(t) +  \Fd_1(t) = p_2^2 t^7  \Fd_1{}'''(t) + 6 p_2^2 t^5  \Fd_1{}'(t)^2 + 6 p_2^2 t^6  \Fd_1{}''(t) + 6 p_2^2 t^5  \Fd_1{}'(t) \\- p_1^2 t^3  \Fd_1{}'(t) + 4 p_2 t^3 (u+v)  \Fd_1{}'(t) + 2 p_1 t^2  \Fd_1{}'(t) + 2 t u v + p_1 t  \Fd_1(t) 
\end{multline}
which is an elementary generalization of the ODE from Carrell-Chapuy \cite{CarrellChapuy2015} to non-zero $p_1$, i.e. with additional digons. Notice that it is always possible to go from the case $p_1=0$ to non-vanishing $p_1$ by adding chains of digons to every edge, i.e. performing the change of variables $t\mapsto t/(1-tp_1)$.

\subsubsection{Bipartite quad-hexangulations} For $d=3$ we find a new system. To save some space we set $p_1=0$ (which can always be added back as mentioned above). The equation $E_1(t)=0$ is
\begin{multline}
	8 p_3 t \Fd_1{}'(t) + 4 p_3 \Fd_1(t) = -2 p_3^3 t^{11} \Fd_1{}^{(5)}(t) - \frac{45}{2} p_3^3 t^9 \Fd_1{}''(t)^2 - 30 p_3^3 t^9 \Fd_1{}'(t) \Fd_1{}^{(3)}(t) \\- 40 p_3^3 t^10 \Fd_1{}^{(4)}(t) - 30 p_3^3 t^7 \Fd_1{}'(t)^3 - 270 p_3^3 t^8 \Fd_1{}'(t) \Fd_1{}''(t) - 240 p_3^3 t^9 \Fd_1{}^{(3)}(t) \\- 270 p_3^3 t^7 \Fd_1{}'(t)^2 - 480 p_3^3 t^8 \Fd_1{}''(t) - 240 p_3^3 t^7 \Fd_1{}'(t) - 10 p_2 p_3^2 t^7 \Fd_2{}^{(3)}(t) + 90 p_3^3 t^5 \Fd_2{}'(t)^2 \\- 60 p_2 p_3^2 t^6 \Fd_2{}''(t) + 10 p_3^2 t^6 \Fd_1{}^{(3)}(t) - 60 p_2 p_3^2 t^5 \Fd_2{}'(t) + 30 p_3^2 t^4 \Fd_1{}'(t)^2 + 48 p_3^2 t^5 \Fd_1{}''(t) \\+ 36 p_3^2 t^4 \Fd_1{}'(t) - 3 p_2 p_3 t^3 (u+v) \Fd_1{}'(t) + 6 p_3^2 t^3 \Fd_1(t) \Fd_1{}'(t) + 6 p_2^3 t^3 \Fd_2{}'(t) + 36 p_3^2 t^3 (u+v) \Fd_2{}'(t) \\- 3 p_2^2 t^2 \Fd_1{}'(t) + 22 p_2 p_3 t^2 \Fd_2{}'(t) + 12 p_3 t u v + 3 p_2^2 t \Fd_1(t) + 18 p_2 p_3 t \Fd_2(t)
\end{multline}
and $E_2(t)=0$
\begin{multline}
	6 p_3 t \Fd_2{}'(t) + 6 p_3 \Fd_2(t) = 2 p_2 p_3 t^6 \Fd_1{}^{(3)}(t) + 6 p_3^2 t^6 \Fd_2{}^{(3)}(t) + 12 p_2 p_3 t^4 \Fd_1{}'(t)^2 \\+ 12 p_2 p_3 t^5 \Fd_1{}''(t) + 36 p_3^2 t^4 \Fd_1{}'(t) \Fd_2{}'(t) + 36 p_3^2 t^5 \Fd_2{}''(t) + 12 p_2 p_3 t^4 \Fd_1{}'(t) + 36 p_3^2 t^4 \Fd_2{}'(t) \\+ 9 p_3 t^2 (u+v) \Fd_1{}'(t) - 2 p_2^2 t^2 \Fd_2{}'(t) + p_2 t \Fd_1{}'(t) - p_2 \Fd_1(t)
\end{multline}
Here it is clear how to solve these equations as recurrence relations: for each $n\geq 1$, $E_1(t)=0$ gives a recurrence for $f^{(1)}_n$ in terms of $f^{(1)}_{n'}$ and $f^{(2)}_{n'}$ for $n'<n$, while $E_2(t)=0$ gives $f^{(2)}_n$ in terms of $f^{(1)}_{n'}$ for $n'\leq n$ and $f^{(2)}_{n'}$ for $n'<n$. This is as predicted by Theorem \ref{thm:Main}.

\subsubsection{Bipartite Octangulations} We set $d=4$ and to save some space, also $p_2=p_3=0$. Then $E_1(t)=0$ is
\begin{multline}
	135 t  \Fd_1{}'(t) + 45  \Fd_1(t) = 23 p_4^2 t^{15}  \Fd_1{}^{(7)}(t) + 875 p_4^2 t^{13}  \Fd_1{}^{(3)}(t)^2 \\+ 1190 p_4^2 t^{13}  \Fd_1{}''(t)  \Fd_1{}^{(4)}(t) + 364 p_4^2 t^{13}  \Fd_1{}'(t)  \Fd_1{}^{(5)}(t) + 966 p_4^2 t^{14}  \Fd_1{}^{(6)}(t) \\+ 2100 p_4^2 t^{11}  \Fd_1{}'(t)  \Fd_1{}''(t)^2 + 1400 p_4^2 t^{11}  \Fd_1{}'(t)^2  \Fd_1{}^{(3)}(t) + 24780 p_4^2 t^{12}  \Fd_1{}''(t)  \Fd_1{}^{(3)}(t) \\+ 9660 p_4^2 t^{12}  \Fd_1{}'(t)  \Fd_1{}^{(4)}(t) + 14490 p_4^2 t^{13}  \Fd_1{}^{(5)}(t) + 420 p_4^2 t^9  \Fd_1{}'(t)^4 \\+ 16800 p_4^2 t^{10}  \Fd_1{}'(t)^2  \Fd_1{}''(t) + 74340 p_4^2 t^{11}  \Fd_1{}''(t)^2 + 82740 p_4^2 t^{11}  \Fd_1{}'(t)  \Fd_1{}^{(3)}(t) \\+ 96600 p_4^2 t^{12}  \Fd_1{}^{(4)}(t) + 16800 p_4^2 t^9  \Fd_1{}'(t)^3 + 264600 p_4^2 t^{10}  \Fd_1{}'(t)  \Fd_1{}''(t) + 289800 p_4^2 t^{11}  \Fd_1{}^{(3)}(t) \\- 420 p_4^2 t^{11}  \Fd_3{}^{(5)}(t) + 132300 p_4^2 t^9  \Fd_1{}'(t)^2 + 347760 p_4^2 t^{10}  \Fd_1{}''(t) - 2100 p_4^2 t^9  \Fd_2{}''(t)^2 \\- 2520 p_4^2 t^9  \Fd_2{}'(t)  \Fd_2{}^{(3)}(t) - 1260 p_4^2 t^9  \Fd_1{}^{(3)}(t)  \Fd_3{}'(t) - 2940 p_4^2 t^9  \Fd_1{}''(t)  \Fd_3{}''(t) \\- 2100 p_4^2 t^9  \Fd_1{}'(t)  \Fd_3{}^{(3)}(t) - 8400 p_4^2 t^{10}  \Fd_3{}^{(4)}(t) + 115920 p_4^2 t^9  \Fd_1{}'(t) \\- 3360 p_4^2 t^7  \Fd_1{}'(t)  \Fd_2{}'(t)^2 - 23520 p_4^2 t^8  \Fd_2{}'(t)  \Fd_2{}''(t) - 1680 p_4^2 t^7  \Fd_1{}'(t)^2  \Fd_3{}'(t) \\- 13440 p_4^2 t^8  \Fd_1{}''(t)  \Fd_3{}'(t) - 18480 p_4^2 t^8  \Fd_1{}'(t)  \Fd_3{}''(t) - 50400 p_4^2 t^9  \Fd_3{}^{(3)}(t) - 23520 p_4^2 t^7  \Fd_2{}'(t)^2 \\- 31920 p_4^2 t^7  \Fd_1{}'(t)  \Fd_3{}'(t) - 100800 p_4^2 t^8  \Fd_3{}''(t) - 252 p_4 t^7 (u+v)  \Fd_1{}^{(3)}(t) - 50400 p_4^2 t^7  \Fd_3{}'(t) \\- 336 p_4 t^5 (u+v)  \Fd_1{}'(t)^2 - 1512 p_4 t^6 (u+v)  \Fd_1{}''(t) + 2520 p_4^2 t^5  \Fd_3{}'(t)^2 - 1512 p_4 t^5 (u+v)  \Fd_1{}'(t)  \\+ 504 p_4 t^6  \Fd_2{}^{(3)}(t) + 1512 p_4 t^4  \Fd_1{}'(t)  \Fd_2{}'(t) + 2472 p_4 t^5  \Fd_2{}''(t) + 288 p_4 t^3  \Fd_2(t)  \Fd_1{}'(t) \\+ 1920 p_4 t^4  \Fd_2{}'(t) + 120 p_4 t^3  \Fd_1(t)  \Fd_2{}'(t) + 720 p_4 t^3 (u+v)  \Fd_3{}'(t) + 180 t u v
\end{multline}
and $E_2(t)=0$,
\begin{multline}
	18 t  \Fd_2{}'(t) + 12  \Fd_2(t) = -p_4 t^{10}  \Fd_1{}^{(5)}(t) - 15 p_4 t^8  \Fd_1{}''(t)^2 - 10 p_4 t^8  \Fd_1{}'(t)  \Fd_1{}^{(3)}(t) \\- 20 p_4 t^9  \Fd_1{}^{(4)}(t) - 120 p_4 t^7  \Fd_1{}'(t)  \Fd_1{}''(t) - 120 p_4 t^8  \Fd_1{}^{(3)}(t) - 120 p_4 t^6  \Fd_1{}'(t)^2 \\- 240 p_4 t^7  \Fd_1{}''(t) - 120 p_4 t^6  \Fd_1{}'(t) + 30 p_4 t^6  \Fd_3{}^{(3)}(t) + 60 p_4 t^4  \Fd_2{}'(t)^2 \\+ 120 p_4 t^4  \Fd_1{}'(t)  \Fd_3{}'(t) + 180 p_4 t^5  \Fd_3{}''(t) + 180 p_4 t^4  \Fd_3{}'(t) + 24 t^2 (u+v)  \Fd_1{}'(t)
\end{multline}
and $E_3(t)=0$,
\begin{multline}
	6 t  \Fd_3{}'(t) + 6  \Fd_3(t) = -2 p_4 t^{10}  \Fd_2{}^{(5)}(t) - 12 p_4 t^8  \Fd_1{}^{(3)}(t)  \Fd_2{}'(t) - 24 p_4 t^8  \Fd_1{}''(t)  \Fd_2{}''(t) \\- 16 p_4 t^8  \Fd_1{}'(t)  \Fd_2{}^{(3)}(t) - 40 p_4 t^9  \Fd_2{}^{(4)}(t) - 24 p_4 t^6  \Fd_1{}'(t)^2  \Fd_2{}'(t) - 120 p_4 t^7  \Fd_1{}''(t)  \Fd_2{}'(t) \\- 144 p_4 t^7  \Fd_1{}'(t)  \Fd_2{}''(t) - 240 p_4 t^8  \Fd_2{}^{(3)}(t) - 264 p_4 t^6  \Fd_1{}'(t)  \Fd_2{}'(t) - 480 p_4 t^7  \Fd_2{}''(t) \\- 240 p_4 t^6  \Fd_2{}'(t) + 48 p_4 t^4  \Fd_2{}'(t)  \Fd_3{}'(t) + 3 t^5  \Fd_1{}^{(3)}(t) + 6 t^3  \Fd_1{}'(t)^2 + 13 t^4  \Fd_1{}''(t) \\+ 8 t^3  \Fd_1{}'(t) + 2 t^2  \Fd_1(t)  \Fd_1{}'(t) + 8 t^2 (u+v)  \Fd_2{}'(t).
\end{multline}
Just like the case $d=2$, it is easy to see how to implement recurrence relations from those three ODEs: for each $n\geq 1$, $E_1(t)$ gives $f^{(1)}_n$ in terms of $f^{(1)}_{n'}$, $f^{(2)}_{n'}$ and $f^{(3)}_{n'}$ for $n'<n$, then $E_2(t)=0$ gives $f^{(2)}_n$ in terms of $f^{(1)}_{n'}$ for $n'\leq n$ and $f^{(2)}_{n'}$ and $f^{(3)}_{n'}$ for $n'<n$, then $E_3(t)=0$ gives $f^{(3)}_n$ in terms of $f^{(1)}_{n'}$ and $f^{(2)}_{n'}$ for $n'\leq n$, and $f^{(3)}_{n'}$ for $n'<n$. This is again as predicted by Theorem \ref{thm:Main}. Below we prove that the system of ODEs we establish always yields recurrence relations of this form.

\section{Recurrence relations} \label{sec:Recurrence}
We are interested in evaluating inductively the number of rooted maps, i.e. the coefficients of $t\Fd{}'(t) = \sum_{n\geq 1} f_n t^n$ where $f_n\equiv f_n(u,v,\pi_d(\mathbf{p}))\in \QQ[u,v,\pi_d(\mathbf{p})]$. For $k=1, \dotsc, d-1$, let
\begin{equation} \label{Series}
	\Fd_k(t) = \sum_{n\geq 1} f^{(k)}_n(u,v,\pi_d(\bfp)) t^{n},\quad \text{with $f^{(k)}_n\in\QQ[u,v,\pi_d(\mathbf{p})]$},
\end{equation}
Notice that $f^{(k)}_{n} = 0$ for $1\leq n<k$ since there are no maps with a root face of degree $k$ if the total size is less than $k$. The first Virasoro constraint $F_1 = t^2 \dt F + tuv$ gives the relation $f^{(1)}_n = f_{n-1}$ for $n\geq 2$ and $f^{(1)}_1=uv$. It is thus equivalent to calculate $f_n$, or $f^{(1)}_n$, which we do. The rest of this section is devoted to proving the following theorem.

\begin{thm} \label{thm:Recurrence}
	 The equations from Theorem \ref{thm:ODEs} give rise to recurrence formulas which determine recursively $f^{(1)}_n, \dotsc, f^{(d-1)}_n$ for $n\geq 1$.
\end{thm}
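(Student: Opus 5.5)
We extract the coefficient of $t^{n}$ (after multiplying by a suitable power of $t$) in each equation $E_k(t)=0$ of Theorem \ref{thm:ODEs}. The aim is to show that every term of the equation contributes to this coefficient only through $f^{(j)}_{n'}$ with $n'<n$, or with $n'\leq n$ and $j<k$, except for an explicit linear part, whose coefficient is a nonzero polynomial $P_k(n)$. The basic tool is the weighting $\omega$ assigning to a monomial $c\,t^{a}\prod_b \Fd_{j_b}{}^{(s_b)}(t)$ the ``shift'' $a-\sum_b s_b$. For a linear term $t^a\Fd_j{}^{(s)}$, the coefficient of $t^N$ is a polynomial in $N$ times $f^{(j)}_{N-a+s}$. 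For a product of $r\geq 2$ factors, each $\Fd_j$ starts at order $t^{1}$, so the coefficient of $t^N$ only involves $f^{(j_b)}_{n_b}$ with $n_b\leq N-a+\sum s_b-(r-1)$.

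The key structural input is that $T$ replaces $\Fd_{k,1^l}$ by $(t^2\dt)^l\Fd_k$, which has shift exactly $+l$ (in the sense $t^{2l}\dt^l$ plus lower terms $t^{2l-i}\dt^{l-i}$, all of shift $l$). I would therefore assign to the $d$-admissible variable $\Fd_{j,1^l}$ the weight $l$, and to $t^{-1}$ the weight $-1$. I would then track, using Proposition \ref{prop:ExplicitG} and the homogeneity and hook-bound statements of Proposition \ref{prop:Fij}, the minimal shift of every monomial appearing in $T\operatorname{KP}_{i,k}(\mathcal{G}^{\pi_d})$. The claim to prove is that the lowest-shift part of $E_k$ is precisely the explicit term $tF_k{}'-kF_k$ (respectively $\Fd_1{}'-(d+1)t^{-1}\Fd_1$ for $E_1$), which has shift $0$ (respectively $-1$). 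Every other term linear in $\Fd_k$ (respectively $\Fd_1$) then has strictly larger shift, so after normalizing it involves only $f^{(k)}_{n'}$ with $n'<n$. Terms linear in $\Fd_j$ for $j\neq k$ are handled in the same way.

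The ordering among different $k$ comes from the ``weight'' $|\lambda|$. $\operatorname{KP}_{i,k}$ is homogeneous of degree $i+k$, and the leading hook is $H_{i+k-1,1}$. After the substitution $G_{d+m,1^l}$, Proposition \ref{prop:ExplicitG} shows that linear terms in $\Fd_{j,1^l}$ with large first index $j$ come with positive powers of $t$ relative to those with small index. Concretely, the terms $\alpha t^{-1}\Fd_{i,1^l}$ only occur for $i\leq m+1$, and in \eqref{G2d-1} the most singular terms $t^{-2}\Fd_{1^{l+1}}$ involve $\Fd_1$. This should give the triangularity: in $E_k$, the terms with shift equal to that of the explicit linear term involve only $\Fd_j$ with $j<k$. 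These are the allowed $f^{(j)}_{\leq n}$ contributions. For $E_1$, all other terms should have strictly larger shift, which is consistent with $R_1$ depending only on quantities $<n$ as stated in Theorem \ref{thm:Main}.

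Having established this, the coefficient extraction gives $P_1(n)=n-(d+1)$ up to normalization and $P_k(n)=n-k$, modulo possible additional same-shift linear contributions from $\operatorname{KP}$ (for instance via the $\alpha$ terms). Each $P_k$ is therefore a nonzero polynomial in $n$, with coefficients in $\mathbb{K}_d$ after clearing $p_d^{-1}$. I expect vanishing at isolated integer values such as $n=d+1$ or $n=k$. These cases have to be treated either by showing that the corresponding contributions are consistent, or by invoking the known vanishing $f^{(k)}_n=0$ for $n<k$ together with the initial condition $f^{(1)}_1=uv$. The main obstacle is the shift bookkeeping through $G_{2d-1,1^l}$ and the nonlinear $\theta$ terms, in particular confirming that the singular terms in \eqref{G2d-1} never produce a linear $\Fd_k$ term of shift equal to or below the leading one, which would spoil triangularity. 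I would first check this on the explicit $d=3,4$ systems above and then argue it in general by induction on $m$ in Proposition \ref{prop:ExplicitG}.
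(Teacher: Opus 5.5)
There is a genuine gap. It sits in the claim that, among the terms linear in $\Fd_k$, the explicit term $t\Fd_k{}'-k\Fd_k$ (resp.\ $\Fd_1{}'-(d+1)t^{-1}\Fd_1$) is the only one of minimal shift. That claim is false.

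Since $\operatorname{KP}_{d,k}$ contains $H_{d+k-1,1}$ with coefficient $1$, $E_k$ contains $-dp_d\,TG_{d+k-1,1}$. The recursion \eqref{G} with $m=k-1$, $l=1$ puts $\frac{k}{(d+k-1)p_d}\bigl(t^{-1}\Fd_{k,1}-\Fd_k\bigr)$ into $G_{d+k-1,1}$. After $T$ this has shift $0$, exactly like the explicit term.

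Likewise, $E_1$ contains $-dp_d\,TG_{2d-1,1}$. The second substitution of $G_{d,1}$ in \eqref{G2d-1} produces $\frac{1}{(2d-1)p_d^2}\bigl(t^{-2}\Fd_{1,1}-2t^{-1}\Fd_1\bigr)$, which has the same shift $-1$ as the explicit term.

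So your triangularity statement is wrong in both places. For $E_k$, the same-shift terms do not involve only $\Fd_j$ with $j<k$; they also involve $\Fd_k$. For $E_1$, there are same-shift terms besides the explicit one. Consequently $P_k(n)=n-k$ and $P_1(n)=n-(d+1)$ are incorrect. After setting $p_1=\dots=p_{d-1}=0$ (which suffices to prove nonvanishing), the actual coefficients are:
\begin{itemize}
\item for $f^{(k)}_n$ in $[t^{n}]E_k$: $-\frac{k-1}{d+k-1}\bigl((d-1)n+k\bigr)$;
\item for $f^{(1)}_n$ in $[t^{n-1}]E_1$: $-\frac{d-1}{2d-1}\bigl((d-1)n+1\bigr)$.
\end{itemize}
The paper's own examples confirm this. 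For $d=2$ the leading part is $t\Fd_1{}'+\Fd_1$, giving $(n+1)f^{(1)}_n$ rather than $(n-3)f^{(1)}_n$. For $d=4$, the equation $E_2=0$ has $18t\Fd_2{}'+12\Fd_2$ rather than a multiple of $t\Fd_2{}'-2\Fd_2$. The check on $d=3,4$ that you planned would have refuted the claim.

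This is not cosmetic: the whole content of the theorem is that the leading coefficient is nonzero at every $n\geq 1$. Your fallback for the predicted zeros at $n=k$ and $n=d+1$ cannot work. The quantities $f^{(k)}_k$ (unicellular maps with $k$ edges, e.g.\ plane trees) and $f^{(1)}_{d+1}=f_d$ are nonzero. They are not forced by $f^{(k)}_n=0$ for $n<k$, nor are they part of the initial data. A vanishing leading coefficient would leave them undetermined, and no ``consistency'' argument can supply them.

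What is missing is the explicit computation of the full coefficient of $f^{(k)}_n$, including the contributions of $G_{d+k-1,1}$ and $G_{2d-1,1}$ described above. After that, positivity of $(d-1)n+k$ and $(d-1)n+1$ gives nonvanishing for all $n\geq 1$.

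The rest of your plan matches the paper's argument: the shift bookkeeping of the remaining linear terms via Proposition \ref{prop:ExplicitG}, and the extra gain for products. It needs one correction to the triangularity claim. The same-shift linear terms in $E_k$ involve only $\Fd_j$ with $j\leq k$, and those in $E_1$ involve only $\Fd_1$.
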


First, we make sure that the appropriate coefficients of $f^{(1)}_n, \dotsc, f^{(d-1)}_n$ do not vanish, with the following lemma.
\begin{lemma}
	The coefficient of $f^{(k)}_n$ in $[t^{n-\delta_{k,1}}]E_k(t)$ does not vanish for any $n\geq 1$ for all $k=1, \dotsc, d-1$.
\end{lemma}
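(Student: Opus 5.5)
Plan: the idea is to track, in each $E_k(t)$, only the terms that can contribute $f^{(k)}_n$ at the order $t^{n-\delta_{k,1}}$, and show their combined coefficient is a nonzero polynomial in $n$. The grading is as follows. Substituting the series \eqref{Series} into a $d$-admissible expression after applying $T$, a linear term $t^a\,\partial_t^b\Fd_j$ sends $f^{(j)}_n$ to the power $t^{n+a-b}$. By \eqref{p1_t}, $\Fd_{j,1^l}=(t^2\partial_t)^l\Fd_j$ is a combination of $t^{l+c}\partial_t^{c}\Fd_j$, $1\le c\le l$, so each such term carries $f^{(j)}_n$ at order $t^{n+l}$. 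Nonlinear monomials $\prod_b \Fd_{j_b,1^{q_b}}$ involve only coefficients of index strictly less than $n$ at order $t^{n}$, because every $\Fd_j$ starts at $t^1$. The only threat therefore comes from linear terms carrying an explicit negative power of $t$. These are exactly the $\alpha t^{-1}\Fd_{i,1^l}$ terms and the terms $\Fd_{1^{l+1}}/((2d-1)p_d^2t^2)$, $-2l\Fd_{1^l}/((2d-1)p_d^2 t)$ of Proposition \ref{prop:ExplicitG}, together with the explicit terms $\Fd_{k,1}/t$ and $\Fd_{1,1}/(t^2dp_d)$ in $\mathcal{E}_k$ and $\mathcal{E}_1$.

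Step one handles $k\ge2$. In $E_k$, the explicit part $t\Fd_k{}'-k\Fd_k$ contributes $(n-k)f^{(k)}_n$ at $t^n$. I then examine $\operatorname{KP}_{i,k}(\mathcal{G}^{\pi_d})$, which by Proposition \ref{prop:Fij} equals $G_{i+k-1,1}$ plus lower hook terms, all homogeneous of degree $i+k$. The substitution $G_{d+m,1^l}$ contains $\alpha t^{-1}\Fd_{i',1^l}$, which after $T$ gives order $t^{n+l-1}\ge t^n$ at index $n$. Only $l=1$ terms can reach exactly $t^n$, through $t^{-1}\Fd_{i',1}=t\Fd_{i'}{}'$. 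I need to show that the index-$k$ contributions at $t^n$ are linear in $n$ with leading coefficient different from the $(n-k)$ term, or at least that the total is a nonzero polynomial. A cleaner route is to specialize $\pi_d(\bfp)$ (say $p_i=0$ for $1\le i<d$) and use the degree/homogeneity of Proposition \ref{prop:Fij} and \eqref{Homogeneity}. This tracks the power of $p_d$, and I expect the terms coming from $\sum_i ip_i\operatorname{KP}_{i,k}$ to carry extra factors $p_d$, or $p_d^{-1}$ paired with $t^{-1}$, so that they cannot cancel $(n-k)$ identically as polynomials in $\mathbb{K}_d[n]$. Nonvanishing of $P_k(n)$ then follows because a nonzero element of $\mathbb{K}_d[n]$ evaluated at an integer $n$ stays nonzero, provided its coefficients do not all vanish; the roots must be handled through the $p_d$-dependence rather than numerically.

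Step two handles $k=1$, where the order is $t^{n-1}$. Here the term $\Fd_{1,1}/(t^2dp_d)$ in $\mathcal{E}_1$, after multiplying by $dp_d$ and applying $T$, gives $\Fd_1{}'$, which produces $nf^{(1)}_n t^{n-1}$. The term $-(d+1)t^{-1}\Fd_1$ produces $-(d+1)f^{(1)}_n$. The remaining source is $\operatorname{KP}_{i,d}$ containing $G_{d+i-1,1}$, in particular $G_{2d-1,1}$ when $i=d$, whose term $\Fd_{1^2}/((2d-1)p_d^2t^2)$ multiplied by $dp_d\cdot dp_d$ gives $\tfrac{d^2}{2d-1}\Fd_1{}'$, and whose term $-2\Fd_1/((2d-1)p_d^2t)$ gives $-\tfrac{2d^2}{2d-1}t^{-1}\Fd_1$. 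The $\alpha$-terms with $l=1$ add further multiples of $\Fd_1{}'$. The main obstacle is computing this net coefficient $c_1n+c_0$ precisely: I must verify it is not identically zero, for instance by showing $c_1\ne0$ through tracking the $p_d$-free part (all other contributions carry powers of $p_i/p_d$). Checking against the explicit $d=2,3,4$ equations, where the left-hand sides such as $t\Fd_1{}'+\Fd_1$ give $n+1\neq0$, guides the sign bookkeeping. I would attempt the general case by induction on the recursion \eqref{G} to isolate the exact coefficient of $t^{-2}\Fd_{1^{l+1}}$ and $t^{-1}\Fd_{i,1}$.
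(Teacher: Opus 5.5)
\textbf{There is a genuine gap: the proposal stops before the computation that constitutes the proof, and the mechanism you propose for $k\ge 2$ cannot work.} Your framing is the paper's: keep only the linear terms $t^{-r}\Fd_{k,1^r}$ (resp.\ $t^{-r-1}\Fd_{1^{r+1}}$) and specialize $p_1=\dots=p_{d-1}=0$. The trouble comes after that.

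\emph{Why the $p_d$-tracking idea fails.} All equations involved (Virasoro, KP, \eqref{G}, the map $T$) are homogeneous for $\deg t=1$, $\deg p_i=-i$, $\deg\Fd_\lambda=|\lambda|$. Since $t^{-r}\Fd_{k,1^r}$ has the same degree $k$ as $\mathcal{E}_k$, after specialization the coefficient of $f^{(k)}_n$ contains no power of $p_d$ at all. Concretely, the factor $dp_d$ in front of $\operatorname{KP}_{d,k}$ cancels the $1/p_d$ carried by the $G$'s. So the KP contribution cannot be separated from $n-k$ by its $p_d$-dependence.

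\emph{Why the fallback fails.} You say a nonzero element of $\mathbb{K}_d[n]$ stays nonzero at integers. This is false, and it fails exactly here: the explicit part $t\Fd_k{}'-k\Fd_k$ alone gives $n-k$, which vanishes at $n=k$. You need the exact coefficient, and then a check that it has no positive integer root.

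\emph{The missing computation for $k\ge2$.} After specialization, the only linear term of $-dp_d\operatorname{KP}_{d,k}(\mathcal{G}^{\pi_d})$ of the form $t^{-r}\Fd_{k,1^r}$ comes from $-dp_d\,G_{d+k-1,1}$. Its coefficient in $\operatorname{KP}_{d,k}$ is $1$ by Proposition \ref{prop:Fij}. The other linear terms either carry an index different from $k$ or have too few negative powers of $t$. By \eqref{G} with $m=k-1$, $l=1$, $G_{d+k-1,1}$ contains $\frac{k}{(d+k-1)p_d}\bigl(t^{-1}\Fd_{k,1}-\Fd_k\bigr)$. Note that $-\Fd_k$ has no negative power of $t$, so your claim that only such terms matter is wrong; you would miss half of this contribution. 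Adding $t^{-1}\Fd_{k,1}-k\Fd_k$ gives the coefficient $-\frac{k-1}{d+k-1}\bigl((d-1)n+k\bigr)$, which is nonzero for $n\ge 1$. The KP term is precisely what removes the root $n=k$.

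\emph{The case $k=1$.} You found the right terms but dropped a sign: $E_1$ contains $-dp_d\sum_i ip_i\,T\operatorname{KP}_{i,d}$. Hence $G_{2d-1,1}$ contributes $-\frac{d^2}{2d-1}\bigl(\Fd_1{}'-2t^{-1}\Fd_1\bigr)$. After specialization nothing else contributes at order $t^{n-1}$: there are no further $\alpha$-terms, and the $t^{-2}$ arises only from the double substitution inside $G_{2d-1,1}$. Combined with $\Fd_1{}'-(d+1)t^{-1}\Fd_1$, this gives $-\frac{d-1}{2d-1}\bigl((d-1)n+1\bigr)\neq 0$.

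Finally, you should state why the specialization suffices: if the specialized coefficient is nonzero at a given $n$, the unspecialized one is nonzero too.
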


The shift of index by $\delta_{k,1}$ is due to the double substitution in $G_{2d-1}$.

\begin{proof}
	Notice that the coefficient of $f^{(k)}_n$ in $[t^{n-\delta_{k,1}}]E_k(t)$ is a polynomial in $u, v$, $p_d^{-1}$, $\pi_d(\bfp)$. Therefore it is enough to prove that it is not identically zero in the special case $p_1=p_2 = \dotsb = p_{d-1}=0$. We will work with this reduction in the proof, but the beginning of the discussion is independent of this reduction.
	
	Let $k\in\{2, \dotsc, d-1\}$. By definition, the coefficient of $f^{(k)}_n$ in $[t^{n}]E_k(t)$ solely comes from terms that are linear in $\Fd_{k}(t)$ and its $t$-derivatives and that are moreover of the form $Q(u,v,\pi_d(\mathbf{p}), p_d^{-1})t^{r} \dt^r\Fd_k(t)$ for a polynomial $Q$, and all possible $r\geq 0$. As for the coefficient of $f^{(1)}_n$ in $[t^{n-1}]E_1(t)$, it comes from terms of the form $\tilde{Q}(u,v,\pi_d(\mathbf{p}), p_d^{-1})t^{r-1} \dt^{r}(\Fd_1(t))$. Tracing their origins before the operator $T$ is applied, these are terms respectively of the form $Q(u,v,\pi_d(\mathbf{p}), p_d^{-1})t^{-r} \Fd_{k,1^r}$ and $\tilde{Q}(u,v,\pi_d(\mathbf{p}), p_d^{-1})t^{-r-1} \Fd_{1^{r+1}}$. Therefore we look for the terms 
	\begin{equation*}
		\text{$[t^{-r} \Fd_{k,1^r}] \mathcal{E}_k(\mathcal{G}^{\pi_d})$ and $[t^{-r-1} \Fd_{1^{r+1}}]\mathcal{E}_1(\mathcal{G}^{\pi_d})$ for all $r\geq 0$.}
	\end{equation*}
	
	In $E_k(\mathcal{G}^{\pi_d})$, there is $\Fd_{k,1}/t - k\Fd_k$ (respectively contributing with $r=1$ and $r=0$). We now concentrate on $\sum_{i=1}^{d} ip_i \operatorname{KP}_{i,k}(\mathcal{G}^{\pi_d})$. Let $i\in\{1, \dotsc, d\}$. In $\operatorname{KP}_{i,k}(\mathcal{G}^{\pi_d})$, the non-linear terms in $\mathcal{G}^{\pi_d}$ cannot give rise to the linear terms we are looking for. The linear terms in $\operatorname{KP}_{i,k}(\mathcal{G}^{\pi_d})$ are of the form $\sum_{s=1}^{i+k-1} \alpha^{(i,k)}_s G_{i+k-s,1^s}$ for some $\alpha^{(i,k)}_s\in\mathbb{Q}$. If $i+k-s<d$, then $G_{i+k-s,1^s} = \Fd_{i+k-s,1^s}$, so the terms we want, having $F_{k,1^r}$, would correspond to $s=i=r$. However, the terms we want also have a factor $t^{-r}$ which is not the case unless $r=0$, contradicting $r=s\geq 1$.
	
	It comes that $i+k-s\geq d$ with $s\geq 1$ and thus
	\begin{equation*}
		-\sum_{i=1}^d ip_i [t^{-r} \Fd_{k,1^r}] \operatorname{KP}_{i,k}(\mathcal{G}^{\pi_d}) = -\sum_{i=1}^d ip_i \sum_{s=1}^{i+k-d} \alpha^{(i,k)}_s [t^{-r} \Fd_{k,1^r}] G_{i+k-s,1^s}(\mathcal{F}_{\text{$d$-adm.}}).
	\end{equation*}
	In this case, $G_{i+k-s,1^s}$ is given by the recurrence \eqref{G}. To simplify the rest of the discussion we specialize the variables $\mathbf{p}=(p_1, p_2, \dotsc)$ as announced using $\bar{\pi}_d (p_i) = \delta_{i,d} p_d$ so that
	\begin{equation*}
		-\bar{\pi}_d\sum_{i=1}^d ip_i [t^{-r} \Fd_{k,1^r}] \operatorname{KP}_{i,k}(\mathcal{G}^{\pi_d}) = -dp_d \sum_{s=1}^{k} \alpha^{(d,k)}_s [t^{-r} F^{\bar{\pi}_d}_{k,1^r}] G_{n+k-s,1^s}(\bar{\pi}_d\mathcal{F}_{\text{$d$-adm.}}),
	\end{equation*}
	with $F^{\bar{\pi}_d}_\lambda \coloneqq \bar{\pi}_d \Fd_\lambda$. The recurrence \eqref{G} becomes for $m=0, \dotsc, d-2$
	\begin{multline} \label{barG}
		(d+m)tp_dG_{d+m,1^l}(\bar{\pi}_d \mathcal{F}_{\text{$d$-adm.}}) =	- t\sum_{\substack{i,j\geq 1\\ i+j=m}} ij\Bigl(\sum_{\substack{a, b\geq0\\ a+b=l}} \binom{l}{r} F^{\bar{\pi}_d}_{i,1^a} F^{\bar{\pi}_d}_{j,1^b} + \operatorname{KP}_{i,j,1^l}(\bar{\pi}_d\mathcal{F}_{\text{hook}})\Bigr)\\
		+ (m+1)F^{\bar{\pi}_d}_{m+1,1^l} - t(u+v)mF^{\bar{\pi}_d}_{m,1^l} - tl(m+1) F^{\bar{\pi}_d}_{m+1,1^{l-1}} - tuv\delta_{m,0}\delta_{l,0},
	\end{multline}
	and for $m=d-1$
	\begin{multline} \label{barG2d-1}
		(2d-1)tp_dG_{2d-1,1^l}(\bar{\pi}_d \mathcal{F}_{\text{$d$-adm.}}) = - t\sum_{\substack{i,j\geq 1\\ i+j=d-1}} ij\Bigl(\sum_{\substack{a, b\geq0\\ a+b=l}} \binom{l}{r} F^{\bar{\pi}_d}_{i,1^a} F^{\bar{\pi}_d}_{j,1^b} + \operatorname{KP}_{i,j,1^l}(\bar{\pi}_d\mathcal{F}_{\text{hook}})\Bigr)\\
		+ d G_{d,1^l}(\bar{\pi}_d\mathcal{F}_{\text{$d$-adm.}}) - t(u+v)(d-1)F^{\bar{\pi}_d}_{d-1,1^l} - tld G_{d,1^{l-1}}(\bar{\pi}_d\mathcal{F}_{\text{$d$-adm.}}),
	\end{multline}
	with $G_{d,1^l}(\bar{\pi}_d\mathcal{F}_{\text{$d$-adm.}}) = (F^{\bar{\pi}_d}_{1^{l+1}} - tl F^{\bar{\pi}_d}_{1^l} - tuv\delta_{l,0})/(tdp_d)$.
	
	We find that the only occurences of $t^{-r} F^{\bar{\pi}_d}_{k,1^r}$ in \eqref{barG} and \eqref{barG2d-1} are in \eqref{barG} for $m=k-1$, with contributions $-\frac{k}{(d+k-1)p_d} F^{\bar{\pi}_d}_k$ for $r=0$ and $\frac{k}{(d+k-1)p_d} t^{-1}F^{\bar{\pi}_d}_{k,1}$ for $r=1$. Hence
	\begin{equation*}
		\begin{aligned}
			-dp_d \sum_{s=1}^{k} \alpha^{(d,k)}_s [t^{-r} F^{\bar{\pi}_d}_{k,1^r}] G_{d+k-s,1^s}(\mathcal{F}_{\text{$d$-adm.}}) &= -dp_d \alpha^{(d,k)}_1 [t^{-r} F^{\bar{\pi}_d}_{k,1^r}] G_{d+k-1,1}(\bar{\pi}_d\mathcal{F}_{\text{$d$-adm.}}),\\
			&= \frac{dk}{d+k-1} (\delta_{r,0} - \delta_{r,1}),
		\end{aligned}
	\end{equation*}
	using $\alpha^{(d,k)}_1=1$.	Overall we find the coefficient of $f^{(k)}_n$ in $[t^{n}]\bar{\pi}_d E_k(t)$ by computing
	\begin{equation*}
		\begin{aligned}
			J_k&\equiv \sum_{r} t^{-r} F^{\bar{\pi}_d}_{k,1^r} [t^{-r} F^{\bar{\pi}_d}_{k,1^r}] \bar{\pi}_d E_k(\mathcal{G}^{\pi_d}) \\
			&= t^{-1}F^{\bar{\pi}_d}_{k,1} - kF^{\bar{\pi}_d}_k  + \frac{dk}{(d+k-1)} F^{\bar{\pi}_d}_k  - \frac{dk}{(d+k-1)} t^{-1}F^{\bar{\pi}_d}_{k,1}\\
			&= \frac{-k(k-1)}{d+k-1}F^{\bar{\pi}_d}_k - \frac{(d-1)(k-1)}{d+k-1} t^{-1} F^{\bar{\pi}_d}_{k,1},
		\end{aligned}
	\end{equation*}
	hence 
	\begin{equation*}
			TJ_k = -\frac{k-1}{d+k-1}\sum_{n\geq 1} \bigl(n(d-1) + k\bigr) f^{(k)}_n t^{n}.
	\end{equation*}
	We conclude by observing that $-\frac{k-1}{d+k-1}(n(d-1) + k)$ does not vanish for any $n\geq 1$ and $k\geq 2$.
	
	We now treat $[t^{-r-1} \Fd_{1^{r+1}}]E_1(\mathcal{G}^{\pi_d})$. It gets contributions at $r=0$ and $r=1$ from $\frac{\Fd_{1,1}-t(d+1)\Fd_1}{t^2dp_d}$. We turn our attention to $[t^{-r-1} \Fd_{1^{r+1}}]\Bigl(-\sum_{i=1}^{d} ip_i\operatorname{KP}_{i,d}(\mathcal{G}^{\pi_d})\Bigr)$. With the same arguments as we used for $E_k(\mathcal{G}^{\pi_d})$, we can focus on the linear terms from $\operatorname{KP}_{i,d}(\mathcal{G}^{\pi_d})$, and by applying $\bar{\pi}_d$ straight away we get
	\begin{equation*}
		-\bar{\pi}_d [t^{-r-1} \Fd_{1^{r+1}}]\sum_{i=1}^d ip_i \operatorname{KP}_{i,d}(\mathcal{G}^{\pi_d}) = -dp_d [t^{-r-1} F^{\bar{\pi}_d}_{1^{r+1}}] \sum_{s=1}^{2d-1} \alpha^{(d,d)}_s G_{2d-s,1^s}(\bar{\pi}_d\mathcal{F}_{\text{$d$-adm.}}),
	\end{equation*}
	for $\alpha^{(d,d)}_s\in\mathbb{Q}$. Also similarly to the case of $E_k(\mathcal{G}^{\pi_d})$, only the terms $2d-s \geq d$, i.e. $d\geq s$, may contribute.
	
	Thus, $G_{2d-s,1^s}(\bar{\pi}_d\mathcal{F}_{\text{$d$-adm.}})$ is given by the recurrences \eqref{barG} and \eqref{barG2d-1}. Then $t^{-r-1}F^{\bar{\pi}_d}_{1^{r+1}}$ only appears for $s=1$, and
	\begin{equation*}
		\begin{aligned}
			-dp_d[t^{-r-1} F^{\bar{\pi}_d}_{1^{r+1}}] G_{2d-1,1}(\bar{\pi}_d \mathcal{F}_{\text{$d$-adm.}}) &= \frac{-d}{(2d-1)p_d} [t^{-r-1} F^{\bar{\pi}_d}_{1^{r+1}}] \Bigl(\frac{F^{\bar{\pi}_d}_{1^2} - 2tF^{\bar{\pi}_d}_1}{t^2} \Bigr)\\
			&= \frac{-d}{(2d-1)p_d} \delta_{r,1} + \frac{2d}{(2d-1)p_d}\delta_{r,0}.
		\end{aligned}		
	\end{equation*}
	Therefore we find the coefficients of $f^{(1)}_n$ in $[t^{n-1}]\bar{\pi}_d E_1(t)$ by computing
	\begin{equation*}
		\begin{aligned}
			J_1&\equiv \sum_{r} t^{-r-1} F^{\bar{\pi}_d}_{1^{r+1}} [t^{-r-1} F^{\bar{\pi}_d}_{1^{r+1}}] \bar{\pi}_d E_1(\mathcal{G}^{\pi_d})
			= \frac{F^{\bar{\pi}_d}_{1,1}-t(d+1)F^{\bar{\pi}_d}_1}{t^2dp_d} - d\frac{F^{\bar{\pi}_d}_{1,1}-2tF^{\bar{\pi}_d}_1}{(2d-1)p_d} \\
			&= -\frac{d-1}{d(2d-1)p_d}\bigl((d-1) t^{-2}F^{\bar{\pi}_d}_{1^2} + t^{-1} F^{\bar{\pi}_d}_1\bigr).
		\end{aligned}
	\end{equation*}
	Substituting the series in $t$ we get
	\begin{equation*}
		\begin{aligned}
			dp_dTJ_1 &= -\frac{d-1}{2d-1}\bigl((d-1)F^{\bar{\pi}_d}{}'(t) + t^{-1} F^{\bar{\pi}_d}_1(t)\bigr),\\
			&= -\frac{d-1}{2d-1}\sum_{n\geq 1} \bigl((d-1)n + 1\bigr) f^{(1)}_n t^{n-1}.
		\end{aligned}
	\end{equation*}
	We conclude our proof by noticing that $(d-1)n + 1>0$ for any $n\geq 1$.
\end{proof}

The following lemma extracts coefficients with respect to $t$ from respectively linear and non-linear terms in $\mathcal{F}_{\text{$d$-adm.}}$.

\begin{lemma} \label{lemma:Extraction}
	\begin{itemize}
		\item Let $e\in\ZZ$, $i, n\geq 1$ and $l\geq 0$. Then there is a polynomial $\chi^{(i,l)}(n')$ in $n'$ such that
		\begin{equation*}
			[t^{n}] t^e T\Fd_{i,1^l} = f^{(i)}_{n-(l+e)} \chi^{(i,l)}(n-(l+e)).
		\end{equation*}
		\item Let $a\geq 2$ and $(i_1, \dotsc, i_a) \in\{1, \dotsc, d-1\}^a$ and $l_1, \dotsc, l_a \geq 0$. Then for all $b=1, \dotsc, a$, the coefficient $[t^{n}] t^e T \prod_{\alpha=1}^a \Fd_{i_\alpha, 1^{l_\alpha}}$ involves $f^{(i_b)}_{n_b}$ with $n_b<n-(l_b+e)$ only.
	\end{itemize}
\end{lemma}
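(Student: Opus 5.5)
The plan is to compute $T\Fd_{i,1^l}=(t^2\partial_t)^l\Fd_i(t)$ explicitly on the series \eqref{Series}. The basic observation is that $t^2\partial_t$ sends a monomial $t^{m}$ to $m\,t^{m+1}$. It therefore raises the degree by exactly one and multiplies by a polynomial factor in the exponent.

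\textbf{First item.} Iterating the observation $l$ times gives
\begin{equation*}
	(t^2\partial_t)^l t^{m} = m(m+1)\dotsm(m+l-1)\, t^{m+l}.
\end{equation*}
This is a one-line induction on $l$. Hence
\begin{equation*}
	t^e T\Fd_{i,1^l} = \sum_{m\geq 1} f^{(i)}_m\, m(m+1)\dotsm(m+l-1)\, t^{m+l+e}.
\end{equation*}
Extracting $[t^n]$ forces $m=n-(l+e)$. This yields the claim with the polynomial $\chi^{(i,l)}(n')=n'(n'+1)\dotsm(n'+l-1)$, which equals $1$ when $l=0$. We use the convention $f^{(i)}_{m}=0$ for $m\leq 0$, so the formula also holds trivially when $n-(l+e)\leq 0$. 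Note that $\chi^{(i,l)}$ does not actually depend on $i$.

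\textbf{Second item.} We write the product as a Cauchy product of the $a$ series just computed. Each factor $T\Fd_{i_\alpha,1^{l_\alpha}}$ is a series of the form $\sum_{m_\alpha\geq 1} c_{m_\alpha}\, t^{m_\alpha+l_\alpha}$, with $c_{m_\alpha}$ proportional to $f^{(i_\alpha)}_{m_\alpha}$. A monomial contributing to $[t^n]\,t^e\prod_\alpha T\Fd_{i_\alpha,1^{l_\alpha}}$ must therefore satisfy
\begin{equation*}
	\sum_{\alpha=1}^a (m_\alpha+l_\alpha) + e = n.
\end{equation*}
For a fixed $b$, all the other terms have $m_\alpha+l_\alpha\geq m_\alpha\geq 1$. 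Since $a\geq 2$, there is at least one such other term, so
\begin{equation*}
	m_b \leq n-e-l_b-\sum_{\alpha\neq b}(m_\alpha+l_\alpha)\leq n-(l_b+e)-1,
\end{equation*}
which is the strict inequality $n_b=m_b<n-(l_b+e)$.

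The only point requiring care is this strictness in the second item. It relies on every series $\Fd_i(t)$ starting at $t^1$, i.e.\ having no constant term, which holds by \eqref{Series}: $F$ counts maps with at least one edge. There is no genuine obstacle, since both items are direct coefficient extractions once the action of $t^2\partial_t$ on monomials is written down.
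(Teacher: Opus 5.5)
Your proof is correct and follows the paper's argument: you expand $(t^2\partial_t)^l\Fd_i(t)$ termwise for the first item, and for the second you use the Cauchy product together with $n_\alpha\geq 1$ and $a\geq 2$ to get the strict inequality. Your explicit formula $\chi^{(i,l)}(n')=n'(n'+1)\dotsm(n'+l-1)$ and your convention $f^{(i)}_m=0$ for $m\leq 0$ make precise what the paper leaves implicit.
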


\begin{proof}
	Note that $T\Fd_{i,1^l} = (t^2\dt)^l \Fd_i(t)$ hence there is a polynomial $\chi^{(i,l)}(n')$ such that $T\Fd_{i,1^l} = \sum_{n'\geq 1} \chi^{(i,l)}(n') f^{(i)}_{n'} t^{n'+l}$. This proves the first bullet point.
	
	For the second one, we have with the same polynomial $\chi^{(i,l)}(n')$
	\begin{equation*}
		t^e T \prod_{\alpha=1}^a \Fd_{i_\alpha, 1^{l_\alpha}} = \sum_{n_1, \dotsc, n_a\geq 1} t^{e + \sum_{\alpha=1}^a(n_\alpha+l_\alpha)} \prod_{\alpha=1}^a \chi^{(i_\alpha, l_\alpha)}(n_\alpha) f^{(i_\alpha)}_{n_\alpha}.
	\end{equation*}
	Then extracting the coefficients of $t^{n}$ gives the equation $n = e + \sum_{\alpha=1}^a(n_\alpha+l_\alpha)$ and the result follows by noting that $n_\alpha>0$ for all $\alpha$.
\end{proof}

\begin{proof}[Proof of Theorem \ref{thm:Recurrence}]
	Let $k\in\{1, \dotsc, d-1\}$ and $n\geq 1$. The goal is here to show that $[t^{n-\delta_{k,1}}] E_k(t)$ only involves
	\begin{enumerate}[label=(\arabic*)]
		\item\label{enum:1} coefficients $f^{(i)}_{n'}$ with $n'\leq n$ if $i\leq k$,
		\item\label{enum:2} coefficients $f^{(i)}_{n'}$ with $n'<n$ if $i>k$.
	\end{enumerate}
	This is enough to transform the ODEs into the recurrence relations of Theorem \ref{thm:Main}. We call them properties \ref{enum:1} and \ref{enum:2}.
	
	We consider first the terms that are linear in $f^{(i)}_{n'}$. Rewinding the construction of the differential equation $E_k(t)=0$, these linear terms come from applying $T$ to $\mathcal{E}_k(\mathcal{G}^{\pi_d})$ and selecting the terms that are linear in $\mathcal{F}_{\text{$d$-adm.}}$. For notational purposes we introduce the operator $L = \sum_{i=1}^{d-1} \sum_{l\geq 0}\Fd_{i,1^l} [\Fd_{i,1^l}]$ which is the projection onto the linear terms in $\QQ[t,t^{-1},u,v,p_d^{-1},\pi_d(\mathbf{p})][\mathcal{F}_{\text{$d$-adm.}}]$. 
	
	Let $i, k\geq 2$, then the linear terms of $\operatorname{KP}_{i,k}(\mathcal{F})$ are of the general form $F_{s, 1^{i+k-s}}$ for $s=1, \dotsc, i+k-1$ since they expand over hooks with at least one $p_1$-derivative. Then for all $k=2, \dotsc, d-1$
	\begin{equation*}
		L\mathcal{E}_k(\mathcal{G}^{\pi_d})\equiv \frac{1}{t}\Fd_{k,1} - k\Fd_k - \sum_{i=1}^d ip_i \sum_{s=1}^{i+k-1} \kappa_{i,k,s} LG_{s,1^{i+k-s}}(\mathcal{F}_{\text{$d$-adm.}})
	\end{equation*}
	where $\kappa_{i,k,s}\in\QQ$. In terms of $d$-admissible quantities, the last sum is
	\begin{multline}
		\sum_{i=1}^d ip_i \sum_{s=1}^{i+k-1} \kappa_{i,k,s} LG_{s,1^{i+k-s}}(\mathcal{F}_{\text{$d$-adm.}}) = \sum_{i=1}^d ip_i \sum_{s=1}^{d-1} \kappa_{i,k,s} \Fd_{s,1^{i+k-s}} \\+ \sum_{i=d+1-k}^d ip_i \sum_{s=d}^{i+k-1} \kappa_{i,k,s} LG_{s,1^{i+k-s}}(\mathcal{F}_{\text{$d$-adm.}}).
	\end{multline}
	After a change of indices, the last sum is 
	\begin{equation*}
		\sum_{i=d+1-k}^d ip_i \sum_{s=d}^{i+k-1} \kappa_{i,k,s} LG_{s,1^{i+k-s}}(\mathcal{F}_{\text{$d$-adm.}}) = \sum_{m=0}^{k-1} \sum_{l=1}^{k-m} \tilde{\kappa}_{l,k,m} LG_{d+m, 1^l}(\mathcal{F}_{\text{$d$-adm.}})
	\end{equation*}
	for some $\tilde{\kappa}_{l,k,m}$ that are polynomials in $\pi_d(\mathbf{p})$. We can then substitute $G_{d+m, 1^l}(\mathcal{F}_{\text{$d$-adm.}})$ from the above sum with his $d$-admissible expression \eqref{Gorder}
	\begin{multline} \label{LinG}
		LG_{d+m,1^l}(\mathcal{F}_{\text{$d$-adm.}}) = \sum_{i=1}^{m+1} \Bigl(\alpha^{(m,l)}_{i} t^{-1}\Fd_{i,1^l} + \beta^{(m,l)}_{i} \Fd_{i,1^{l-1}}\Bigr) + \sum_{i=1}^{d-1} \gamma^{(m,l)}_i \Fd_{i,1^l} \\
		+ \mathbf{1}_{m\geq 2}\sum_{i=1}^m \sum_{r=1}^{i-1} \rho^{(m,l)}_{i,r} \Fd_{i-r,1^{l+r}}.
	\end{multline}	
	The next step is to apply the operator $T$ to the resulting expression of $L\mathcal{E}_k(\mathcal{G}^{\pi_d})$,
	\begin{multline} \label{LinearEk}
		TL\mathcal{E}_k(\mathcal{G}^{\pi_d}) = t^{-1} T\Fd_{k,1} - kT\Fd_k - \sum_{i=1}^d ip_i \sum_{s=1}^{d-1} \kappa_{i,k,s} T\Fd_{s,1^{i+k-s}}\\
		- \sum_{m=0}^{k-1} \sum_{l=1}^{k-m} \tilde{\kappa}_{l,k,m} \sum_{i=1}^{m+1} \Bigl(\alpha^{(m,l)}_{i} t^{-1}T\Fd_{i,1^l} + \beta^{(m,l)}_{i} T\Fd_{i,1^{l-1}}\Bigr) - \sum_{m=0}^{k-1} \sum_{l=1}^{k-m} \tilde{\kappa}_{l,k,m} \sum_{i=1}^{d-1} \gamma^{(m,l)}_i T\Fd_{i,1^l}\\
		- \sum_{m=2}^{k-1} \sum_{l=1}^{k-m} \tilde{\kappa}_{l,k,m} \sum_{i=1}^m \sum_{r=1}^{i-1} \rho^{(m,l)}_{i,r} T\Fd_{i-r,1^{l+r}},
	\end{multline}
	and extract the coefficient of $[t^{n}]$ using Lemma \ref{lemma:Extraction}. We separate the cases as follows.
	\begin{itemize}
		\item All the terms that are of the form $t^e T\Fd_{i,1^l}$ with $e=0$ and $l\geq1$ contribute to $[t^{n}]E_k(t)$ with $f^{(i)}_{n'}$ with $n'=n-l<n$, times some polynomial evaluated at $n'$. For example,
		\begin{equation*}
			\sum_{m=0}^{k-1} \sum_{l=1}^{k-m} \tilde{\kappa}_{l,k,m} \sum_{i=1}^{d-1} \gamma^{(m,l)}_i T\Fd_{i,1^l} = \sum_{n'\geq 1} \sum_{i=1}^{d-1} \sum_{l=1}^k \tilde{\gamma}^{(k)}_{i,l}(n') f^{(i)}_{n'} t^{l+n'},
		\end{equation*}
		for some $\tilde{\gamma}^{(k)}_{i,l}(n')$ polynomial in $n'$. Properties \ref{enum:1} and \ref{enum:2} are thus satisfied for those terms. This accounts for all terms on \eqref{LinearEk} except the following ones.
		\item $t^{-1} T\Fd_{k,1} - kT\Fd_k$ contributes to $[t^{n}]E_k(t)$ with $f^{(k)}_{n}$ times a polynomial in $n$, thus satisfying property \ref{enum:1}.
		\item $\alpha^{(m,l)}_{i} t^{-1}T\Fd_{i,1^l}$ contributes with $f^{(i)}_{n'}$ with $n'=n-l-1\leq n$, times a polynomial in $n'$, and $i$ ranges from 1 to $k$,
		\begin{equation*}
			\sum_{m=0}^{k-1} \sum_{l=1}^{k-m} \tilde{\kappa}_{l,k,m} \sum_{i=1}^{m+1} \alpha^{(m,l)}_{i} t^{-1}T\Fd_{i,1^l} = \sum_{n'\geq 1} \sum_{i=1}^k \sum_{l=1}^k \tilde{\alpha}^{(k)}_{i,l}(n') f^{(i)}_{n'} t^{l-1+n'},
		\end{equation*}
		for some $\tilde{\alpha}^{(k)}_{i,l}(n')$ polynomial in $n'$. This gives the weak inequality of property \ref{enum:1}.
		\item $\beta^{(m,l)}_{i} T\Fd_{i,1^{l-1}}$ also contributes with $f^{(i)}_{n'}$ with $n'=n-l-1\leq n$, times a polynomial in $n'$, while $i$ also ranges from 1 to $k$.
	\end{itemize}
	This proves properties \ref{enum:1} and \ref{enum:2}.

	Consider now $L\mathcal{E}_1(\mathcal{G}^{\pi_d})$, which requires calculating $L\operatorname{KP}_{i,d}(\mathcal{G}^{\pi_d})$ for $i=1, \dotsc, d$. It expands in the form $G_{s, 1^{i+d-s}}(\mathcal{F}_{\text{$d$-adm.}})$ with $s=1, \dotsc, i+d-1$, hence
	\begin{multline}
		L\mathcal{E}_1(\mathcal{G}^{\pi_d}) =  -\sum_{i=1}^{d} ip_i \sum_{s=1}^{i+d-1} \kappa_{i,s} LG_{s,1^{i+d-s}}(\mathcal{F}_{\text{$d$-adm.}}) + \frac{\Fd_{1,1}-t(d+1)\Fd_1}{t^2dp_d} \\
		- \sum_{i=1}^{d-1} \frac{ip_i}{dp_d} \Bigl(\frac{\Fd_{i,1}}{t} - d\Fd_i\Bigr).
	\end{multline}
	Following the same steps as for $k\geq 2$,
	\begin{multline} \label{LinearE1}
		L\mathcal{E}_1(\mathcal{G}^{\pi_d}) =  -\sum_{i=1}^d ip_i \sum_{s=1}^{d-1} \kappa_{i,s} \Fd_{s,1^{i+d-s}} - \sum_{m=0}^{d-1} \sum_{l=1}^{d-m} \tilde{\kappa}_{l,m} LG_{d+m, 1^l}(\mathcal{F}_{\text{$d$-adm.}}) \\
		+ \frac{\Fd_{1,1}-t(d+1)\Fd_1}{t^2dp_d} - \sum_{i=1}^{d-1} \frac{ip_i}{dp_d} \Bigl(\frac{\Fd_{i,1}}{t} - d\Fd_i\Bigr),
	\end{multline}
	for some $\tilde{\kappa}_{l,m}$ that are polynomials in $\pi_d(\mathbf{p})$. Then $LG_{d+m, 1^l}(\mathcal{F}_{\text{$d$-adm.}})$ is given precisely by \eqref{LinG} for all $m=0, \dotsc, d-2$. However $m=d-1$ now appears in the sum, in which case one has to consider the $d$-admissible expression \eqref{G2d-1} for $G_{2d-1, 1^l}(\mathcal{F}_{\text{$d$-adm.}})$,
	\begin{multline}
		LG_{2d-1,1^l}(\mathcal{F}_{\text{$d$-adm.}}) = \sum_{i=1}^{d-1} \Bigl(\alpha^{(d-1,l)}_{i} t^{-1}\Fd_{i,1^l} + \beta^{(d-1,l)}_i \Fd_{i,1^{l-1}} + \gamma^{(d-1,l)}_i \Fd_{i,1^l}\Bigr) \\
		+ \sum_{i=1}^{d-1} \sum_{r=1}^{i-1} \rho^{(d-1,l)}_{i,r} \Fd_{i-r,1^{l+r}} + \frac{\Fd_{1^{l+1}}}{(2d-1)p_d^2t^2} - \frac{2l\Fd_{1^l}}{(2d-1)p_d^2t}.
	\end{multline}
	We now apply $T$ and extract the coefficient of $t^n$ using Lemma \ref{lemma:Extraction}. Let us separate various pieces as follows.
	\begin{itemize}
		\item All the terms that are of the form $t^e T\Fd_{i,1^l}$ with $e=0$ and $l\geq1$ contribute to $[t^{n-1}]E_1(t)$ with $f^{(i)}_{n'}$ with $n'=n-l<n$, times some polynomial evaluated at $n'$. This accounts for all terms on \eqref{LinearEk} except the following ones.
		\item $\frac{T\Fd_{1,1}-t(d+1)T\Fd_1}{t^2dp_d}$ contributes to $[t^{n-1}]E_1(t)$ with $f^{(1)}_{n}$ times a polynomial in $n$.
		\item From $LG_{2d-1,1^l}(\mathcal{F}_{\text{$d$-adm.}})$ with $l\geq 1$, we have $\frac{T\Fd_{1^{l+1}}}{(2d-1)p_d^2t^2} - \frac{2lT\Fd_{1^l}}{(2d-1)p_d^2t}$ which contributes with $f^{(1)}_{n'}$ with $n'=n-l+1\leq n$, times a polynomial in $n'$, hence (only $l=1$ appears in the sum for $m=d-1$)
		\begin{equation*}
			\tilde{\kappa}_{1,d-1} \Bigl(\frac{\Fd_{1^{2}}}{(2d-1)p_d^2t^2} - \frac{2\Fd_{1}}{(2d-1)p_d^2t}\Bigr) = \sum_{n'\geq1} \omega(n') f^{(1)}_{n'} t^{n'-1},
		\end{equation*}
		for some polynomials $\omega(n')$.
		\item In \eqref{LinearE1}, we have $- \sum_{i=1}^{d-1} \frac{ip_i}{dp_d} \Bigl(\frac{T\Fd_{i,1}}{t} - dT\Fd_i\Bigr)$, which involves exactly $f^{(i)}_{n-1}$, times some polynomials in $n$.
		\item In $LG_{2d-1,1^l}$ we find also $\alpha^{(d-1,l)}_{i} t^{-1}T\Fd_{i,1^l} + \beta^{(d-1,l)}_i T\Fd_{i,1^{l-1}}$ which gives rise to $f^{(i)}_{n'}$ with $n'=n-l< n$, times some polynomials in $n'$.
	\end{itemize}
	This proves properties \ref{enum:1} and \ref{enum:2} for $TL\mathcal{E}_1$.
	
	We will not give as much detail for the non-linear terms of $E_k(t)$. It is in fact enough to show that in $\mathcal{E}_k(\mathcal{G}^{\pi_d})$, all monomials of the form $t^e \prod_{\alpha=1}^a \Fd_{i_\alpha, 1^{l_\alpha}}$ for $a\geq 2$ (i.e. the non-linear terms in $\mathcal{F}_{\text{$d$-adm.}}$) all have $e=0$ and $l\geq 1$. As a consequence, all $f^{(i)}_{n'}$s appearing in the non-linear terms of $[t^{n-\delta_{k,1}}]E_k(t)$ have $n'<n$ for all $i=1, \dotsc, d-1$, hence satisfy properties \ref{enum:1} and \ref{enum:2}.
\end{proof}

\end{document}